\documentclass{nm}

\setcounter{page}{1}

\renewcommand\doi{10.4208/nmtma. }

\usepackage[utf8]{inputenc}
\usepackage{geometry}
\usepackage{xcolor,soul}
\usepackage{float}
\usepackage{booktabs}
\usepackage{mathtools}
\usepackage{url}
\usepackage{bm}
\usepackage{tikz}
\usepackage{subcaption}
\usepackage{amsmath}
\usepackage{amsthm}

\usepackage{amssymb}
\usepackage{graphicx}
\usepackage{setspace}
\theoremstyle{plain}

\newtheorem{rem}{Remark}
\newtheorem{lem}{Lemma}
\newtheorem{thm}[lem]{Theorem}
\newtheorem{prop}[lem]{Proposition}
\newtheorem{cor}[lem]{Corollary}
\newtheorem{assum}{Assumption}

\usepackage{algorithm}
\usepackage[noend]{algpseudocode}

\usepackage{pgfkeys} 	%
\usepackage{ifthen} 		%

\pgfkeys{
/tensor/.cd, %
dim1/.initial = 1,		dim1/.get = \dimOne,		dim1/.store in = \dimOne,
dim2/.initial = 1,		dim2/.get = \dimTwo,		dim2/.store in = \dimTwo,
dim3/.initial = 1,		dim3/.get = \dimThree,	dim3/.store in = \dimThree,
xshift/.initial = 0, 	xshift/.get = \xShift, 		xshift/.store in = \xShift,
yshift/.initial = 0, 	yshift/.get = \yShift, 		yshift/.store in = \yShift,
xspec/.initial = 0, 	xspec/.get = \xSpec, 		xspec/.store in = \xSpec,
yspec/.initial = 0, 	yspec/.get = \ySpec, 		yspec/.store in = \ySpec,
scale/.initial = 1, 	scale/.get = \myScale, 	scale/.store in = \myScale,
fill/.initial = white, 	fill/.get = \myFill, 		fill/.store in = \myFill,
back edges/.initial = 0, 	back edges/.get = \myBack, 	back edges/.store in = \myBack,
slice type/.initial = none, 		slice type/.get = \sliceType, 			slice type/.store in = \sliceType, %
number of slices/.initial = 1, 	number of slices/.get = \nSlices, 		number of slices/.store in = \nSlices,
slice width/.initial = 1, 		slice width/.get = \sWidth, 				slice width/.store in = \sWidth, %
}

\newcommand{\tensor}[1][]{\@tensor[#1]}
\def\@tensor[#1] (#2,#3) #4; {{ %

\pgfkeys{/tensor/.cd,#1}

\def\depthScale{0.5} %

\pgfmathsetmacro{\numSlicesMinusOne}{\nSlices-1}
\pgfmathsetmacro{\numSlicesPlusOne}{\nSlices+1}

\pgfmathsetmacro{\sliceLength}{\myScale*\dimOne}

\ifthenelse{\equal{\sliceType}{lateral}}
	{
	
	\pgfmathsetmacro{\sliceWidth}{\myScale*\sWidth*0.9*\dimTwo/\nSlices}
	\pgfmathsetmacro{\sliceGap}{\myScale*\dimTwo/(\nSlices-1) - \nSlices*\sliceWidth/(\nSlices-1)}
	\pgfmathsetmacro{\sliceDepth}{\myScale*\dimThree}
	
	} 
	{
	\ifthenelse{\equal{\sliceType}{frontal}}
		{
		
		\pgfmathsetmacro{\sliceDepth}{\myScale*\sWidth*0.9*\dimThree/\nSlices}
		\pgfmathsetmacro{\sliceGap}{\myScale*\dimThree/(\nSlices-1) - \nSlices*\sliceDepth/(\nSlices-1)}
		\pgfmathsetmacro{\sliceWidth}{\myScale*\dimTwo}
	
		}
		{
		\pgfmathsetmacro{\sliceWidth}{\myScale*\dimTwo}
		\pgfmathsetmacro{\sliceDepth}{\myScale*\dimThree}
		}

	}

\def\xFront{#2 + \xShift}	
\def\yFront{#3 + \yShift}
\def\xBack{#2 + \xShift + \depthScale*\sliceDepth + \xSpec*\sliceDepth}
\def\yBack{#3 + \yShift + \depthScale*\sliceDepth + \ySpec*\sliceDepth}

\def\aFront{(\xFront, \yFront)}
\def\bFront{(\xFront, \yFront + \sliceLength)}
\def\cFront{(\xFront + \sliceWidth, \yFront + \sliceLength)}
\def\dFront{(\xFront + \sliceWidth, \yFront)}

\def\aBack{(\xBack, \yBack)}
\def\bBack{(\xBack, \yBack + \sliceLength)}
\def\cBack{(\xBack + \sliceWidth, \yBack + \sliceLength)}
\def\dBack{(\xBack+ \sliceWidth, \yBack)}

\ifthenelse{\NOT\equal{\myFill}{nofill}}
	{
	\def\tempTensor{
		\fill[\myFill!25] \bFront -- \bBack -- \cBack -- \cFront -- cycle; %
		\fill[\myFill!75] \dFront -- \dBack -- \cBack -- \cFront -- cycle; %
		\fill[\myFill!50] \aFront rectangle \cFront;  %
	
		\draw \aFront rectangle \cFront; %
		\draw \bFront -- \bBack; 
		\draw \cFront -- \cBack;
		\draw \dFront -- \dBack;
	
		\draw \bBack -- \cBack;
		\draw \cBack -- \dBack;
		}
	}
	{ %

	\def\tempTensor{
		\draw \aFront rectangle \cFront; %
		
		\ifthenelse{\NOT\equal{\myBack}{0}}
		{
			\draw[dashed] \bBack -- \aBack -- \dBack;
		}{}
		
		\draw \dBack -- \cBack -- \bBack;

		\ifthenelse{\NOT\equal{\myBack}{0}}
		{
			\draw[dashed] \aFront -- \aBack;
		}{}
		
		\draw \bFront -- \bBack;
		\draw \cFront -- \cBack;
		\draw \dFront -- \dBack;
		}
	}

\ifthenelse{\equal{\sliceType}{lateral}}
	{
	\foreach\sliceCount in {0,...,\numSlicesMinusOne}
		{	
		\begin{scope}[shift ={(\sliceCount*\sliceWidth + \sliceCount*\sliceGap, 0)}]
			\tempTensor;
		\end{scope}
		}
	
	}
	{
	
	\ifthenelse{\equal{\sliceType}{frontal}}
	{
	
	\pgfmathsetmacro{\xStep}{\sliceDepth/2 + \sliceGap/2 + \myScale*\dimThree*\xSpec/(\nSlices-(1-\sWidth))}
	\pgfmathsetmacro{\yStep}{\sliceDepth/2 + \sliceGap/2 +  \myScale*\dimThree*\ySpec/(\nSlices-(1-\sWidth))}
	
	\foreach\sliceCount in {-\numSlicesMinusOne,...,0}
		{	
		
		\begin{scope}[shift = {(-\sliceCount*\xStep, -\sliceCount*\yStep)}]
			\tempTensor;
		\end{scope}
	
		}
	
	}
	{
	\tempTensor;
	}
	
	}

\node at (#2 + \dimTwo/2, #3 + \dimOne/2) {#4};

}} %

\newcommand{\am}[1]{\textcolor{orange}{\textit{#1}}}

\newcommand{\Mod}[1]{\ (\mathrm{mod}\ #1)}

\newcommand{\fold}[1]{\text{fold}\left(#1\right)}
\newcommand{\unfold}[1]{\text{unfold}\left(#1\right)}

\newcommand{\bcirc}[1]{\text{bcirc}\left(#1\right)}

\providecommand{\keywords}[1]{{\textit{Keywords:}} #1}

\begin{document}

\markboth{    }{      }
\title{Frontal Slice Approaches for Tensor Linear Systems}

\author[ ]{ Hengrui Luo\affil{1}\affil{3},  Anna Ma\affil{2} }
\address{
\affilnum{1}\  Department of Statistics, Rice University, Houston, 77005, USA\\
\affilnum{2}\ Department of Mathematics, University of California, Irvine, 92617, USA \\
\affilnum{3}\  Computational Research Division, Lawrence Berkeley National Laboratory, Berkeley, 94701, USA\\
[2ex]
\rm }%

\emails{
{\tt  } ( ),
{\tt  } ( )
}

\begin{abstract}

Inspired by the row and column action methods for solving large-scale linear systems, in this work, we explore the use of frontal slices for solving tensor linear systems. In particular, this paper presents a novel approach for using frontal slices of a tensor $\mathcal{A}$ to solve tensor linear systems $\mathcal{A} * \mathcal{X} = \mathcal{B}$ where $*$ denotes the t-product. In addition, we consider variations of this method, including cyclic, block, and randomized approaches, each designed to optimize performance in different operational contexts. Our primary contribution lies in the development and convergence analysis of these methods. Experimental results on synthetically generated and real-world data, including applications such as image and video deblurring, demonstrate the efficacy of our proposed approaches and validate our theoretical findings.

\end{abstract}
\keywords{tensor linear systems, t-product, iterative methods, tensor sketching}

\ams{15A69, 15A72, 65F10}

\maketitle

\section{Introduction}
\thispagestyle{plain}
In the realm of contemporary data science, the availability of multi-dimensional
data, commonly referred to as tensors, has catalyzed transformative
advances across diverse domains such as machine learning \cite{cho2023surrogate,reichel2022tensor},
neuroimaging \cite{lyu2023bayesian}, recommendation systems \cite{chi2020provable} and signal processing \cite{murray2023randomized,sidiropoulos2017tensor,chi2012tensors}.
Tensors, which extend beyond the simpler constructs of matrices, encapsulate
higher-order interactions within data that matrices alone cannot.
While potentially offering a more comprehensive framework for analysis
and predictive modeling \cite{kolda2009tensor,chi2012tensors}, tensors come with
complexity and high dimensionality, which introduce escalated computational
costs and demanding storage requirements, particularly in large-scale and high-fidelity compressible 
datasets \cite{bengua2017efficient, kielstra2023tensor}. 

In this work, we are interested in solving large-scale consistent tensor
multi-linear systems of the form 
\begin{equation}
\mathcal{A}*\mathcal{X}=\mathcal{B},\label{eq:tensorlinsys}
\end{equation}
where $\mathcal{A}\in\mathbb{R}^{n_{1}\times n_{2}\times n}$, $\mathcal{X}\in\mathbb{R}^{n_{2}\times n_{3}\times n}$,
$\mathcal{B}\in\mathbb{R}^{n_{1}\times n_{3}\times n}$, and $*$
denotes the tensor product, known as the \textit{t-product} \cite{kilmer2011factorization}. Direct solvers for solving tensor linear systems induce high computational complexity \cite{grasedyck2004existence,chen2024low,chen2021regularized}. 
As an alternative, iterative algorithms for such approximating solutions to \eqref{eq:tensorlinsys}
have been previously studied \cite{liang2019alternating,dehdezi2022rapid,li2022gradient}
and have applications for image and video deblurring, regression, dictionary
learning, and facial recognition \cite{hao2013facial,el2022tensor,reichel2022tensor}.

The t-product can be viewed as a generalization of the matrix-vector product. In particular, when $n=1$ and $n_{3}=1$, the t-product simplifies to the matrix-vector
product. In the matrix-data setting, row and column iterative methods
have been proposed to solve large-scale linear systems of equations
\begin{equation}
\bm{A}\bm{x}=\bm{b},\label{eq:matrixlinsys}
\end{equation}
where $\bm{A}\in\mathbb{R}^{n_{1}\times n_{2}},\bm{b}\in\mathbb{R}^{n_{1}\times1}$
are given and $\bm{x}\in\mathbb{R}^{n_{2}\times1}$ is unknown. When $n_{1}$ and $n_{2}$ are very large, solving
the linear system directly (e.g., by computing the pseudo-inverse)
quickly becomes impractical. In other large-scale settings, one may
not even be able to load all entries but only a few rows or columns
of the matrix $\bm{A}$ at a time. In such settings, stochastic iterative
methods with low memory footprints, such as the Randomized Kaczmarz
or Randomized Gauss-Seidel (RGS) algorithms, can be used to solve \eqref{eq:matrixlinsys}.
The relationship between these two methods, one using rows of the matrix and the other using columns of the matrix, has been studied in previous
works \cite{ma2015convergence}. Such row and column action methods solving linear systems have been further generalized to a framework
known as \textit{sketch-and-project} \cite{gower2015randomized,raskutti2016statistical,gower2021adaptive,murray2023randomized,cho2023surrogate}.

Sketching simplifies computations by solving a sub-system as a proxy to the original system while preserving the data's
intrinsic characteristics, thereby addressing the practical limitations
of direct manipulation due to size or complexity \cite{raskutti2016statistical,tropp2017practical, murray2023randomized}, which proves essential in scenarios where handling full datasets 
is impractical. A primary advantage of sketching is the enhancement
of computational efficiency. In tensor operations such as multiplications
or factorizations, multiplication complexity (and so is the solving complexity) can increase exponentially with the
sizes of the tensor mode (See Proposition \ref{prop:The-computational-complexity}). Sketching techniques reduce the effective
dimensionality of the tensor system, which in turn accelerates computations
and diminishes the burden on memory resources. This improvement is
vital in real-time processing applications like video processing \cite{LHL2024, liu2022tensor,  kolda2009tensor},
where swift and efficient processing is crucial.

Previous works considered sketches under the t-product. For example, \cite{ma2022randomized} propose the
tensor Randomized Kaczmarz Algorithm, where ``row-slices''
(instead of matrix rows), corresponding to row sketching, are utilized at every iteration (TRK). Other works
\cite{murray2023randomized} have
considered solving tensor multi-linear systems using a general sketch-and-project
framework. The work by  \cite{ma2022randomized} encapsulates the TRK algorithm for the sketch-and-project regime for tensor systems.

Although the tensorized versions of RK and the sketch-and-project
framework have been proposed and studied, they do not capture settings
unique to the tensor landscape by nature of being extensions of matrix-based
algorithms. In this work, we propose a suite of \textit{frontal-slice-based}
iterative methods for approximating the solution to tensor linear systems.
Our main contributions include: 
\begin{enumerate}
\item Variations (cyclic, block, and randomized) of methods that utilize frontal slices to approximation
solutions to multi-linear tensor systems under the t-product
\item Convergence and computational complexity analysis for the cyclic frontal slice-based method and its variants.  
\item Empirical comparison of our proposed methods to other tensor
sketching methods and real-world image and video data. 
\end{enumerate}
All the aforementioned algorithms are indicative of a broader trend in tensor computation,
where methods are adapted or developed to exploit the multi-dimensional
structure of tensors. By focusing on individual slices, these algorithms offer a more manageable and potentially
more efficient approach to solving tensor-based problems compared
to methods that must consider the entire tensor at each step.

\paragraph{Organization. }

The rest of the paper is organized as follows: Section~\ref{sec:Tensor-t-product}
presents related work and background on the tensor t-product. Section
\ref{sec:Algorithmic-Approaches} covers our main algorithms, including
the classic gradient descent and our proposed algorithms, which balance
storage and computational complexities. Section \ref{sec:Convergence-Analysis}
provides algorithm convergence guarantees. Section \ref{sec:Experiments} uses synthetic
and real-world experiments to validate our results empirically. Lastly, Section
\ref{sec:Future-works} discusses our findings and points to some
interesting future works.

\section{\label{sec:Tensor-t-product}Background}

This section reviews related works and presents necessary background
information regarding the tensor t-product.

\paragraph{Notation.}

Calligraphic letters are reserved for tensors, capital letters for
matrices, and lowercase letters for vectors and indices. We adopt
MATLAB notation to reference elements or slices of a tensor, where
a ``slice" refers to a sub-tensor in which one index is fixed. For
example, $\mathcal{A}_{i::}$ denotes the $i$-th row slice of dimension
$1\times n_{2}\times n$ of $\mathcal{A}$ and $a_{ijk}\in\mathbb{R}$
denotes the $(i,j,k)$-th element of $\mathcal{A}$. Since frontal
slices will be used frequently, we shorten the notation of frontal
slices to $\mathcal{A}_{k}$. Figure~\ref{fig:slices} presents a visual representation of the row and frontal slices of a tensor. We assume that $\mathcal{A}$ and $\mathcal{B}$ in \eqref{eq:tensorlinsys} are known, and our goal is to approximate   $\mathcal{X}$, where $\mathcal{X}$ is the unique, exact solution to \eqref{eq:tensorlinsys}.

\begin{figure}[ht]
    \begin{center}
\begin{tabular}{ccc}
\begin{subfigure}{.45\linewidth}
\centering
  \resizebox{0.4\textwidth}{!}{
    \begin{tikzpicture}
    	\tensor[dim1 = 0.75, dim2 = 1., dim3 = 0.75, fill = lightgray] (6  ,-.9) {};
        \tensor[dim1 = 0.15, dim2 = 1., dim3 = 0.75, fill = lightgray] (6 ,0) {};
    \end{tikzpicture}
  }
\caption{Row slice $\mathcal{A}_{1::}$ of tensor $\mathcal{A}$.}
\label{fig:row_slice}
\end{subfigure}
& 
\begin{subfigure}{.45\linewidth}  
    \centering
  \resizebox{0.4\textwidth}{!}{
    \begin{tikzpicture}	
    	\tensor[dim1 = 1., dim2 = 1., dim3 = .75, fill = lightgray] (10  ,-.9) {};
        \tensor[dim1 = 1., dim2 = 1., dim3 = 0.15, fill = lightgray] (9.8 ,-1.1) {};
    \end{tikzpicture}
    }
    \caption{Frontal slice $\mathcal{A}_{1}$ of tensor $\mathcal{A}$.}
    \label{fig:frontal_slice}
\end{subfigure}
\end{tabular}
\end{center}
\caption{The first row slice $\mathcal{A}_{1::}$ and frontal slice $\mathcal{A}_{1}$ of $\mathcal{A}$.}
\label{fig:slices}
\end{figure}
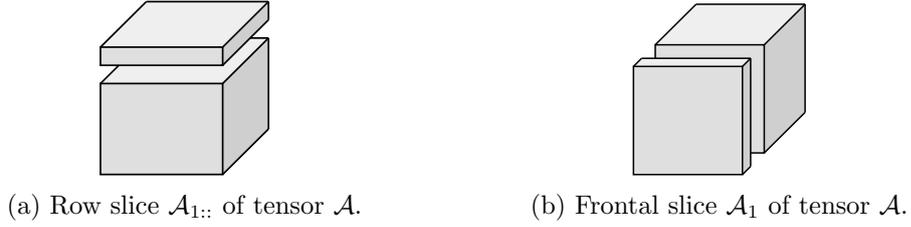

\subsection{The Tensor t-product}

\label{sec:tprod}The tensor t-product \cite{kilmer2011factorization}
is an operation in tensor algebra motivated to generalize the concept of matrix
multiplication to tensors. While the t-product can be further generalized
to higher order tensors and to use other orthogonal transforms \cite{kernfeld2015tensor,song2019robust},
here, we present the original form of the t-product, using third-order
tensors %
associated with the Fourier Transform. 
Readers who are well-acquainted
with the t-product may skip this section and proceed to Section~\ref{sec:Algorithmic-Approaches}. Before defining the t-product, we first introduce two tensor operations. 
\begin{definition}
(Tensor operations, \cite{kilmer2011factorization}) \label{defn:bcirc}For
$\mathcal{A}\in\mathbb{R}^{n_{1}\times n_{2}\times n}$, let $\unfold{\mathcal{A}}$
denote the unfolded tensor 
\[
\unfold{\mathcal{A}}=\begin{pmatrix}A_{1}\\
A_{2}\\
\vdots\\
A_{n}
\end{pmatrix}\in\mathbb{R}^{n_{1}n\times n_{2}}.
\]
The inverse of the unfolding operation is denoted $\fold{\cdot}$
and is such that $\fold{\unfold{\mathcal{A}}}=\mathcal{A}$. Let $\bcirc{\mathcal{A}}$ denote the block-circulant matrix 
\begin{equation}
\bcirc{\mathcal{A}}=\begin{pmatrix}A_{1} & A_{n} & A_{n-1} & \ldots & A_{2}\\
A_{2} & A_{1} & A_{n} & \ldots & A_{3}\\
\vdots & \vdots & \vdots & \ddots & \vdots\\
A_{n} & A_{n-1} & A_{n-2} & \ldots & A_{1}
\end{pmatrix}\in\mathbb{R}^{n_{1}n\times n_{2}n}.\label{eqn:bcirc}
\end{equation}
\end{definition}

\begin{definition}
(Tensor product (t-product), \cite{kilmer2011factorization})
\label{defn:tprod} Given $\mathcal{A}\in\mathbb{R}^{n_{1}\times n_{2}\times n}$ and
$\mathcal{X}\in\mathbb{R}^{n_{2}\times n_{3}\times n}$, the t-product is defined as 
\[
\mathcal{A}*\mathcal{X}=\fold{\bcirc{\mathcal{A}}\unfold{\mathcal{X}}}\in\mathbb{R}^{n_{1}\times n_{3}\times n}.
\] 
\end{definition}
Since circulant matrices are diagonalizable by the Fourier Transform,
computationally, the t-product can be efficiently performed by computing
the Discrete Fourier Transform (DFT) along each tensor's third dimension,
multiplying the corresponding frontal slices in the Fourier domain
and then applying the Inverse Discrete Fourier Transform (IDFT) to
the product. It should be noted that because this transformation is applied along the third mode of the tensor, it requires all $n$ frontal slices.  So if only one frontal slice of $\mathcal{A}$ in \eqref{eq:tensorlinsys} is given at a time, the $\bcirc{\mathcal{A}}$ is not complete and its diagonalization is not immediately computable.

\paragraph{Data perspectives. }
Unlike matrix-vector products, data tensors are flexible that they admits a variety of tensor products. Before we move on, let us briefly compare the t-product to two other popular tensor products: the Kronecker product
and the Khatri-Rao product. Practitioners usually treat matrices as data whose rows are samples and
columns are features. In the tensor data setting \cite{bi2021tensors,LHL2024},
data tensors use additional dimensions (e.g., frontal dimension) to
represent multiple instances of sample feature sets. One concrete example is video tensor $\mathcal{A}$, where frontal slices $\mathcal{A}_{i}$ represent different pixel-by-pixel frames. 

The Kronecker product between two matrices $\bm{A}$ and $\bm{B}$
results in a new matrix where each scalar element $a_{ij}$ of matrix
$\bm{A}$ is multiplied by the entire matrix $\bm{B}$. 
This can be extended to tensors. In particular, the Kronecker product between
two tensors $\mathcal{A}$ and $\mathcal{B}$ is defined as a larger
tensor $\mathcal{C}$ such that each element $a_{ijk}$ of $\mathcal{A}$
is multiplied by the entire tensor $\mathcal{B}$. Geometrically,
this represents a multidimensional scaling where every element of
$\mathcal{B}$ is scaled by   corresponding elements of $\mathcal{A}$,
creating a much larger tensor that encodes all combinations of the
elements from $\mathcal{A}$ and $\mathcal{B}$.

The Khatri-Rao product \cite{sidiropoulos2017tensor,liu2008hadamard}
is   a ``column-wise'' Kronecker product. For matrices,
it can be thought of as taking the Kronecker product of corresponding
columns from two matrices. %
For two tensors $\mathcal{A}$
and $\mathcal{B}$ with equal numbers of slices along a dimension, the Khatri-Rao product is formed by the
column-wise Kronecker product of corresponding slices from $\mathcal{A}$
and $\mathcal{B}$ along that shared dimension. Geometrically, this
can be viewed as combining features from two different matrices (or
tensors) at a more granular (coarser but more regulated) level than
the Kronecker product. If we imagine each column of the matrix (or slice
of the tensor) as a feature, the Khatri-Rao product combines these features
in a way that every feature in one vector (tensor slice) is paired with
every feature in the corresponding column of the other matrix (tensor slice), producing a detailed and granular new
feature space. It preserves more structure compared to the Kronecker
product and is often used in scenarios where the interaction between
corresponding features (columns) is crucial.

The t-product \cite{kilmer2011factorization} allows tensors to 
interact across slices in a more complex way, using a chosen transform, in our case the Fourier transform.
In contrast to the Kronecker and Khatri-Rao products, the t-product
utilizes a convolutional operation along the   frontal
dimension   of the tensors $\mathcal{A}$ and
$\mathcal{B}$. If we imagine each column of the matrix (or frontal slice
of the tensor) as a feature, the t-product combines every feature in one
vector (or tensor) paired with every feature in the corresponding
column of the other matrix (or slice of the other tensor) by taking
product in the Fourier domain (hence taking convolution in the original
domain) to form a new feature space. This convolution-like process
essentially mixes the frontal slices of the tensors based on their
relative positions, integrating them into a new tensor whose slices
are combinations of the original slices manipulated through the circulant
structure. However, when the frontal dimension is of size 1, there is no convolution, reducing the t-product to our usual matrix
product.

As the need for higher-dimensional data representations grows in fields
such as computer vision, signal processing, and machine learning,
exploring and developing operations that extend
the t-product to higher-order tensors becomes essential. These higher-order tensors
(order greater than three) can represent more complex data structures,
capturing additional modes of variability. The higher-order t-product can be defined recursively, as suggested
by \cite{martin2013order}. For instance, consider a fourth-order
tensor $\mathcal{A}\in\mathbb{R}^{n\times m\times p\times q}$. The
t-product for fourth-order tensors can be constructed by treating
$\mathcal{A}$ as a tensor in $\mathbb{R}^{n\times n\times(pq)}$,
where the last two modes are merged into a single mode, and then applying
the t-product recursively.

In extending linear systems sketching to the tensor setting, there are multiple options for operating on tensors such as using the t-product \cite{wang2015fast,li2017near}. The t-product has been of particular interest
due to its natural extension of many desirable linear algebraic properties
and definitions.

In this work, we focus on t-product for tensors of 3 modes due to
its wide applicability. The tensor t-product
generalizes linear transformations and matrix multiplication to higher
dimensions, facilitating the development of algorithms that can manipulate
multi-dimensional arrays efficiently. The transformation step typically
uses fast transform methods, such as the Fast Fourier Transform (FFT),
which are computationally efficient and widely supported in various
software and hardware implementations, enhancing the practicality
of using tensor t-products in large-scale applications. Unlike matrices,
tensors can encapsulate data with multiple dimensions without losing
inherent multi-way relationships, making tensor operations particularly
suitable for complex data structures found in modern applications
like signal processing and machine learning. In Figure \ref{fig:hst},
we show the frontal sketching effectively deblurs images by treating
them as third-order tensors. The sketching method not only preserves
spatial and temporal correlations, ensuring each update incorporates
information from all slices, but also provides a significantly clearer
image, which demonstrates the power of tensor-based techniques in
image deblurring.
Before presenting our algorithmic approach, we first present additional useful definitions for this work.

\begin{figure}
\centering \includegraphics[width=1\textwidth]{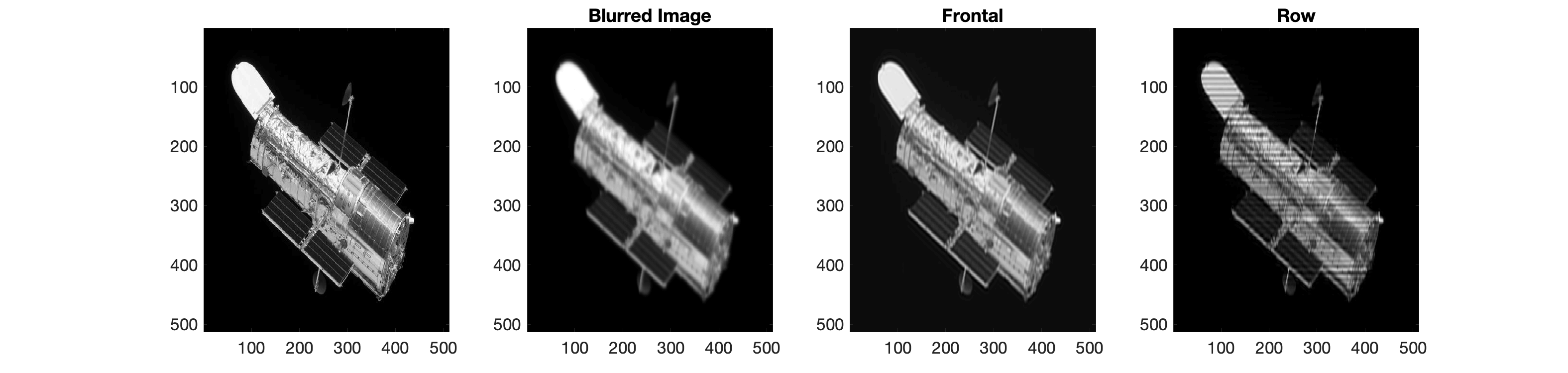}
\caption{The original $512\times512$ pixel image of the Hubble Space Telescope
\protect\protect\protect\protect\protect\url{https://github.com/jnagy1/IRtools/blob/master/Extra/test_data/HSTgray.jpg},
its Gaussian blurred version and the frontal sketching (Algorithm
\ref{alg:tsolve_cyclic_slices}) deblurred version. We defer the details
of this deblurring experiment to Section \ref{sec:Experiments}.}
\label{fig:hst} 
\end{figure}

\begin{definition}
(Identity tensor, \cite{kilmer2011factorization}) \label{defn:identity}
The identity tensor $\mathcal{I}\in\mathbb{R}^{n_{1}\times n_{1}\times n}$
is defined as a tensor whose first frontal slice is the identity matrix
and is zero elsewhere. 
\end{definition}

\begin{definition}
(Transpose, \cite{kilmer2011factorization}) \label{defn:transpose}
Let $\mathcal{A}\in\mathbb{R}^{n_{1}\times n_{2}\times n}$ then $\mathcal{A}^{T}$
is obtained by transposing each frontal slice and reversing the order
of slices $2$ to $n$. 
\end{definition}

Note that the identity tensor is defined such that, under the t-product,
$\mathcal{A}*\mathcal{I}=\mathcal{A}$ and $\mathcal{I}*\mathcal{A}=\mathcal{A}$,
for appropriately sized tensors. Furthermore, the transpose is defined
such that not only does $(\mathcal{A}^{T})^{T}=\mathcal{A}$ hold,
it also holds that under the t-product $(\mathcal{A}*\mathcal{B})^{T}=\mathcal{B}^{T}*\mathcal{A}^{T}$,
which is not generally valid for other tensor products. 
\begin{definition}
(Tensor Norms, \cite{du2021randomized}) \label{defn:norms} We define
the squared Frobenius norm of $\mathcal{A}\in\mathbb{R}^{n_{1}\times n_{2}\times n}$
as 
\begin{equation}
\|\mathcal{A}\|_{F}^{2}=\sum_{i=1}^{n_{1}}\sum_{j=1}^{n_{2}}\sum_{k=1}^{n}a_{ijk}^{2},\label{eq:fronorm}
\end{equation}
where $a_{ijk}$ is the $(i,j,k)$-th entry of $\mathcal{A}$. The
operator norm of $\mathcal{A}$ is defined
as 
\begin{equation}
\|\mathcal{A}\|_{op}\coloneqq\sup_{\|\mathcal{X}\|_{F}=1}\|\mathcal{A}*\mathcal{X}\|_{F}=\|\bcirc{\mathcal{A}}\|_{2},\label{eq:specnorm}
\end{equation}
where $\|\cdot\|_{2}$ denotes the usual matrix 2-norm for the block circulant matrix. 
\end{definition}

\begin{rem}
     by Definition~\ref{defn:norms}, it holds that 
\[
\|\mathcal{A}*\mathcal{X}\|_{F}\leq\|\mathcal{A}\|_{op}\|\mathcal{X}\|_{F},
\]
for appropriately sized tensors $\mathcal{A}$ and $\mathcal{X}$.
\label{rem:op-fro}
\end{rem}

\subsection{Related Works}
Major interest from the statistics community focuses
on extending the tensor regression problem, whose solution depends
on \eqref{eq:tensorlinsys}. For example, \cite{lu2024statistical}
high-dimensional quantile tensor regression utilizes convex decomposable
regularizers to approximate the solution to ~\eqref{eq:tensorlinsys}
using the incremental proximal gradient method, which bears much worse
complexity than gradient descent \cite{bertsekas2011incremental}.
Another generalization by \cite{he2014graphical} incorporates an
$L_{1}$ norm penalization term and solves \eqref{eq:tensorlinsys} using a block coordinate
descent algorithm and each block of coordinates \cite{friedman2008sparse}.
While the convergence rates of the coordinate descent algorithm for tensor
variant is unknown, the nested optimization introduces a higher
complexity then the gradient descent. For generalized tensor models
\cite{ghannam2023tensor}, the optimization problem based on \eqref{eq:tensorlinsys}
can be even more expensive, although it still depends on \eqref{eq:tensorlinsys}
in a vectorized form. The scalability of the gradient descent algorithm
needs to be investigated beyond the small size (i.e., each mode of
the tensor is usually smaller than 100 in these papers' examples)
when the common practice of sketch-and-project is in the play.

Previous works have considered the application of iterative methods to solve ~\eqref{eq:tensorlinsys}. 
In~\cite{ma2022randomized}, the authors propose a tensor-based Kaczmarz method. The proposed method was shown to be a special case of the more general tensor sketch-and-project (TSP) method, which was proposed and studied by~\cite{tang2022sketch}, based on the matrix sketch-and-project framework of~\cite{gower2015randomized}. 
In the TSP method, a sketch tensor $\mathcal{S}\in\mathbb{R}^{k\times n_{1}\times n}$
is multiplied with $\mathcal{A}$ to obtain its sketch, and then an
objective is minimized subject to the sketch itself, i.e., the iterates $\mathcal{X}(t+1)$ are approximated by solving the minimization problem
\begin{equation}
\mathcal{X}(t+1)=\text{argmin}_{\mathcal{X}}\|\mathcal{X}-\mathcal{X}(t)\|_{F}^{2}\text{ s.t. }\mathcal{S}^{T}*\mathcal{A}*\mathcal{X}=\mathcal{S}^{T}*\mathcal{B}.
\end{equation}
The closed-form solution of this minimization problem is written as: 
\begin{equation}
\mathcal{X}(t+1)=\mathcal{X}(t)+\mathcal{A}^{T}*\mathcal{S}*\left(\mathcal{S}^{T}*\mathcal{A}*\mathcal{A}^{T}*\mathcal{S}\right)^{\dagger}*\mathcal{S}*\left(\mathcal{B}-\mathcal{A}*\mathcal{X}(t)\right),\label{eq:TSP_update}
\end{equation}
where the notation $\left(\cdot\right)^{\dagger}$ denotes the Moore-Penrose
pseudoinverse defined in~\cite{tang2022sketch}. 
Choosing $k=1$
and $\mathcal{S}\in\mathbb{R}^{1\times n_{2}\times n}$ where, for
a fixed index $i\in[n_{2}]$, $\mathcal{S}_{1ij}=1$ for all $j\in[n]$
and zero otherwise, creates a ``row sketch'' of $\mathcal{A}$,
i.e., $\mathcal{S}_{i}*\mathcal{A}=\mathcal{A}_{i::}$.
In this setting, single-row slices of the data
tensor $\mathcal{A}$ and row slices of $\mathcal{B}$ are utilized a time, and the TSP method recovers the TRK algorithm of ~\cite{ma2022randomized}. Similarly, a column-slice-based method, which only uses single column slices at a time, can be derived using the TSP method. Unfortunately, for sketches $\mathcal{S}_i$ to obtain frontal slices $\tilde{\mathcal{A}}_{i}$, where $\tilde{\mathcal{A}}_{i} \in \mathbb{R}^{n_1 \times n_2 \times n}$ denotes a tensor with $\mathcal{A}_{i}$ in the $i$-th frontal slice and zero elsewhere, $\mathcal{S}_i$ would need to depend on $\mathcal{A}$, using all frontal slices of $\mathcal{A}$. This may not be feasible if only frontal slice-wise information is accessible at a time, thus restracting the use of sketch and project methods to reduce computational overhead.

While column-slice and row-slice sketching are well-studied, the sketching
along the frontal dimension, which differentiates tensors from matrices,
remains unexplored. 
In this work, we consider a unique
setting to solving tensor systems via frontal-slice sampling.
While row and column sketching fall under the sketch-and-project framework
\cite{wilkinson_distance-preserving_2022,cho2023surrogate,murray2023randomized}, the approaches proposed here does not. 

\section{\label{sec:Algorithmic-Approaches}The Frontal Slice Descent Algorithm}

We approach the design of an iterative method that uses frontal slices of $\mathcal{A}$ via a decomposition of the gradient descent iterates applied to the least squares objective. In doing so, we derive two variations of the proposed method, which we refer to as the Frontal Slice Descent (FSD) method. 
To that end, let $\mathcal{F}(\mathcal{X})=\frac{1}{2} \|\mathcal{A}*\mathcal{X}-\mathcal{B}\|^{2}$
denote the least squares objective function for the system \eqref{eq:tensorlinsys}, then the gradient
can be written as $\nabla\mathcal{F}=\mathcal{A}^{T}*(\mathcal{A}*\mathcal{X}-\mathcal{B})$. Thus, applying gradient descent to approximate solutions to~\eqref{eq:tensorlinsys} produces the iterates
\begin{equation}
    \mathcal{X}(t+1) = \mathcal{X}(t) + \alpha \mathcal{A}^{T} * (\mathcal{B} - \mathcal{A} * \mathcal{X}(t)),
    \label{eq:GD}
\end{equation}
where $\alpha \in \mathbb{R}^{+}$ is assumed to be a fixed learning rate for simplicity. 
Algorithm~\ref{alg:tsolve_full_gradient} presents pseudo-code for this approach. In the algorithm, we inital residual tensor $\mathcal{R}=\mathcal{B} - \mathcal{A} * \mathcal{X}$ is
set to $\mathcal{B}$ and the inital approximation of $\mathcal{X}$ is set to
zero. Then, iterative updates to $\mathcal{R}$ and $\mathcal{X}$ are performed. The update for $\mathcal{R}$ involves
subtracting the product of $\mathcal{A}$ and $\mathcal{X}$, representing
the difference between the current estimate and the measurements.
The update for $\mathcal{X}$ involves a step in the direction of
the gradient of the residual, scaled by a learning rate $\alpha$.
This is akin to classical gradient descent but adapted for tensors.

The use of the full gradient \eqref{eq:SFSD} in each step, suggests
that this algorithm is designed for scenarios where computing the
full gradient is feasible. This may not be the case for very large-scale
problems, where only some of $\mathcal{A}$ is available at a time due to (e.g., memory) constraints. %
This algorithm is the primary variant for its application
in multi-dimensional data contexts, offering a way to harness the
structure and relationships inherent in tensor data. For practitioners,
it's important to consider the computational cost and the convergence
properties of this method, particularly in relation to the size and
complexity of the data tensors involved.

Algorithm \ref{alg:tsolve_full_gradient}
can be computationally expensive, especially for large tensors. We propose a novel algorithm that uses one frontal slice per iteration to remedy the per-iteration information needed from $\mathcal{A}$. %
This adaptation presents an innovative way to exploit
the multi-dimensional nature of tensors, offering a feasible
and efficient approach to tensor-based optimization problems using only frontal slices of $\mathcal{A}$ per iteration.

\begin{algorithm}[h!]
\caption{Gradient descent for t-product least squares problem \eqref{eq:tensorlinsys}.}
\label{alg:tsolve_full_gradient}

\begin{algorithmic}

\State \textbf{Input:} data matrix $\mathcal{A}$, measurements $\mathcal{B}$,
learning rate $\alpha$

\State \textbf{Initialize} $\mathcal{R}(0)=\mathcal{B}$, $\mathcal{X}(0)=\mathbf{0}$,
$t=0$

\While{stopping criteria not satisfied}

\State $\mathcal{R}(t+1)= \mathcal{B} - \mathcal{A}*\mathcal{X}(t) $ \Comment{Residual at iteration $t+1$}

\State $\mathcal{X}(t+1)=\mathcal{X}(t) + \alpha\mathcal{A}^{T}*\mathcal{R}(t+1)$\Comment{Gradient step} 
\State $t=t+1$

\EndWhile

\end{algorithmic} 
\end{algorithm}

\subsection{Using Frontal Slices}

The t-product gradient descent algorithm (Algorithm \ref{alg:tsolve_full_gradient})
geometrically represents an iterative process of aligning tensor slices
through dynamic convolutional interactions. The circular convolution
integrates spatial relationships between slices, and the transpose
operation ensures that adjustments are made coherently to align the
tensors optimally.

In the $t$-th iteration, the algorithm works iteratively to align
the tensor $\mathcal{X}(t)$ with $\mathcal{B}$ in the t-product
sense. Each iteration refines this alignment through the convolutional
interactions represented by the t-product. The circular convolution
$\mathcal{A}*\mathcal{X}(t)$ integrates adjacent slice information,
and the algorithm adjusts $\mathcal{X}(t)$ into $\mathcal{X}(t+1)=\mathcal{X}(t)+\alpha\mathcal{A}^{T}*\mathcal{R}(t+1)$
to account for misalignments across adjacent slices in $\mathcal{B}$.
The gradient $\mathcal{A}^{T}*\mathcal{R}(t+1)$ points in the direction
of the steepest descent, where each iteration accounts for the convolutional
effect of adjacent slices, hence maintaining the geometric integrity
of the circular convolution relationship. Given appropriate step sizes
$\alpha$, the method iteratively reduces the error until it converges
to the solution.%

The proposed method approximates the solution to $\mathcal{A}*\mathcal{X}=\mathcal{B}$
by iteratively updating the tensor $\mathcal{X}$ based on the current
residual $\mathcal{R}$ and the selected slice of $\mathcal{A}$.
Using a single slice for each update, as opposed to the full
tensor can significantly reduce communication cost (needing only $\mathcal{A}_i$ with $n_1n_2$ entries  instead of all $n_1n_2n$ elements from $\mathcal{A}$),  making
these algorithms suitable for large-scale tensor problems. The remainder of this section focuses on the derivation of our method and the residual approximation.

Instead of using all frontal slices in each iteration, our goal is to approximate $\nabla \mathcal{F}$ using only one frontal slice at a time. Although $\mathcal{F}$ cannot be additively decomposed with respect to frontal slices, we can still design methods that use a frontal slice at a time. Define $\tilde{\mathcal{A}}_{i}\in\mathbb{R}^{n_{1}\times n_{2}\times n}$, such that the $i$-th frontal slice of $\tilde{\mathcal{A}}_{i}$ is
the $i$-th frontal slice of $\mathcal{A}$ and $\tilde{\mathcal{A}}_{i}$ is zero elsewhere. Then, $\mathcal{A}$
can be written as a sum of its frontal slices $\mathcal{A} = \sum_{i=1}^{n} \tilde{\mathcal{A}}_{i}$. Thus, the gradient can be written as
$$\nabla\mathcal{F}(\mathcal{X}) = \sum_{i=1}^{n}\tilde{\mathcal{A}}_{i}^{T}*\left( \mathcal{B} - \mathcal{A} * \mathcal{X} \right).$$
Unfortunately, the residual, $\mathcal{B} - \mathcal{A}*\mathcal{X}$ cannot be computed without all frontal slices. However, it can be approximated. Suppose that at iteration $i(t)$, frontal slice $\mathcal{A}_{i(t)}$ is given (e.g.,  $i(t)=i(t-n+1)$ represents cyclic indexing), then we propose to approximate $\nabla \mathcal{F}$ with  
\begin{equation}
    \nabla \tilde{F}_i = \tilde{\mathcal{A}}_{i(t)}^T * \mathcal{R}(t+1),
\end{equation}
where 
\begin{equation}
\mathcal{R}(t+1) =
\begin{cases}
    \mathcal{R}(t) - \tilde{\mathcal{A}}_{i(t)}^T* \mathcal{X}(t) & \text{if } t \leq n \\ 
    \mathcal{R}(t) - \tilde{\mathcal{A}}_{i(t)}^T \mathcal{X}(t) + \tilde{\mathcal{A}}_{i(t)}^T * \mathcal{X}(t-n+1) & \text{otherwise,}
\end{cases}
\label{eq:resid_approx}
\end{equation}
is an approximation of the residual, which only depends on one frontal slice in a fixed iteration. Then the \emph{Frontal Slice Descent} (FSD) algorithm approximates the solution to~\eqref{eq:tensorlinsys} via the iterates
\begin{equation}
\mathcal{X}(t+1)=\mathcal{X}(t)+\alpha\tilde{\mathcal{A}}_{i(t)}^{T}*\mathcal{R}(t+1),\label{eq:SFSD}
\end{equation}
where $\alpha\in\mathbb{R}_{+}$ denotes the learning rate.

The residual approximation step in our proposed method can be interpreted as a delayed computation of the exact residual, whose idea is similar to boosting in modeling literature \cite{LHL2024,friedman2001greedy}. In particular, it can be shown that 
\begin{equation}
    \mathcal{R}(t+1) = \mathcal{B} - \mathcal{A} * \mathcal{X}(t-n+1) + \mathcal{E},
\end{equation}
where $\mathcal{E}$ contains terms depending on the iterates $\mathcal{X}(t-n+2), ..., \mathcal{X}(t)$, with the convention that $\mathcal{X}(t)=0$ for $t\leq0$. Furthermore, when   frontal slices are mutually orthogonal, i.e., $\mathcal{A}_i^T * \mathcal{A}_j = 0$, $\mathcal{R}(t+1) = \mathcal{B} - \mathcal{A} * \mathcal{X}(t)$,   FSD performs similarly to gradient descent.

While the proposed method reduces data demand by removing the requirement of all $n$ frontal slices of $\mathcal{A}$, its drawback is that it requires a historical memory of $n$ versions of $\mathcal{X}$. In practical applications such as image and video deblurring, this drawback can be significantly circumvented due to the structure of $\mathcal{A}$, specifically when only a small number of frontal slices of $\mathcal{A}$ are nonzero. See Section~\ref{sec:Experiments} for more details.

\subsection{\label{subsec:Frontal-Slice-Descent} Variations of Frontal Slice Descent}

In this subsection, we present variations of the proposed approach, including using multiple frontal slices and randomly selecting slices.

\textbf{Cyclic FSD.} 
Our main approach iterates over the frontal slices of the tensor cyclically.
By focusing on one frontal slice at a time, the proposed method simplifies the computation
and reduces the immediate computational load. This cyclic approach
ensures that all slices are treated uniformly over time, providing
a comprehensive coverage of the tensor's dimensions. This method might
be more predictable and methodical, which can be beneficial in certain
applications where systematic use of all data is crucial. The theoretical guarantees presented in Section~\ref{sec:Convergence-Analysis} assume frontal slices are selected cyclically.

\textbf{Block FSD.} The pseudo-code for a generalized version of our proposed FSD is provided in Algorithm~\ref{alg:tsolve_cyclic_slices}. Instead of using single frontal slices, our method can be extended to $s$ frontal slices simultaneously. In particular, let
\begin{equation}
    \tilde{\mathcal{A}}_{i}^{s}=\tilde{\mathcal{A}}_{i\cdot(s-1)+1}+\tilde{\mathcal{A}}_{i\cdot(s-1)+2}+\cdots+\tilde{\mathcal{A}}_{i\cdot s+1},
\end{equation}
denote a frontal sub-block of $\mathcal{A}$. When $s=1$, Algorithm~\ref{alg:tsolve_cyclic_slices} simplifies to the single frontal slice setting. When $s$ is larger, we expect that
the approximation improves (See Appendix \ref{subsec:Blocked-Case}
for a discussion on blocking strategy). 
Algorithm \ref{alg:tsolve_cyclic_slices}
 performs partial gradient updates across slices
until completing a full gradient step, based on considering the decomposition
of the full gradient update $\mathcal{A}^{T}*(\mathcal{A}*\mathcal{X}-\mathcal{B})=\sum_{i=1}^{n}\tilde{\mathcal{A}}_{i}^{T}*(\mathcal{A}*\mathcal{X}-\mathcal{B})=\sum_{i=1}^{n}\tilde{\mathcal{A}}_{i}^{T}*\mathcal{R}$. It obtains more information from $\mathcal{A}$ per iteration compared to the single-slice Algorithm \ref{alg:tsolve_cyclic_slices} and generalizes it. 
We leave the study of optimal blocking strategies for future work.

\begin{algorithm}[t]
\caption{Block Gradient descent with cyclic frontal slices for t-product least squares
problem \eqref{eq:tensorlinsys}.}
\label{alg:tsolve_cyclic_slices} \begin{algorithmic}

\State \textbf{Input:} data matrix $\mathcal{A}$, measurements $\mathcal{B}$,
learning rate $\alpha$, block size $s$

\State \textbf{Initialize} $\mathcal{R}(0)=\mathcal{B}$, $\mathcal{X}(0)=\mathbf{0}$,
$t=0$

\While{stopping criteria not satisfied}

\State $i=\text{mod}(t,n/s)+1$

\State $\mathcal{R}(t+1)=\mathcal{R}(t)-\tilde{\mathcal{A}}_{i}^{s}*\mathcal{X}(t)+\tilde{\mathcal{A}}_{i}^{s}*\mathcal{X}(t-n+1)$ 

\Comment{Residual Approximation with the convention that $\mathcal{X}(t)=0$ for $t\leq0$.}

\State $\mathcal{X}(t+1)=\mathcal{X}(t)+\alpha\left(\tilde{\mathcal{A}}_{i}^{s}\right)^T*\mathcal{R}(t+1)$\Comment{Solution
approximation}

\State $t=t+1$

\EndWhile \end{algorithmic} 
\end{algorithm}

\textbf{Random FSD.}
Algorithm~\ref{alg:tsolve_random_slices} presents a stochastic approach where  frontal slices are selected randomly  in each
iteration. This stochastic element can potentially lead to faster
convergence in some cases. Random selection can also be more effective
in dealing with tensors where certain slices are more informative
than others, as it does not systematically prioritize any particular
order of slices. 

In this case, the memory complexity remains the same, with the requirement of storing $n$ historical approximations $\mathcal{X}(k_i)$ where $k_i$ is the last iteration slice $i$ was selected. The residual approximate then is replaced with,
\begin{align}
    \mathcal{R}(t+1)=\mathcal{R}(t)-\tilde{\mathcal{A}}_{i}*\mathcal{X}(t)+\tilde{\mathcal{A}}_{i}*\mathcal{X}(k_i)\label{eq:resid_approx_random}
\end{align}
with the convention that $\mathcal{X}(t)=0$ for $t\leq0$.

\begin{algorithm}[H]
\caption{Gradient descent with random frontal slices for t-product least squares
problem \eqref{eq:tensorlinsys}.}
\label{alg:tsolve_random_slices} \begin{algorithmic}

\State \textbf{Input:} data matrix $\mathcal{A}$, measurements $\mathcal{B}$,
learning rate $\alpha$

\State \textbf{Initialize} $\mathcal{R}(0)=\mathcal{B}$, $\mathcal{X}(0)=\mathbf{0}$, $k = 0 \in \mathbb{R}^n$,
$t=0$

\While{stopping criteria not satisfied}

\State $i=\text{random}\{1,2,\cdots,n\}$ \Comment{e.g., 
uniform or leverage score sampling.}

\State $\mathcal{R}(t+1)=\mathcal{R}(t)-\tilde{\mathcal{A}}_{i}*\mathcal{X}(t)+\tilde{\mathcal{A}}_{i}*\mathcal{X}(k_i)$

\Comment{$k_i$ is the last iteration slice $i$ was selected.}

\Comment{Residual Approximation with the convention that $\mathcal{X}(t)=0$ for $t\leq0$.}

\State $\mathcal{X}(t+1)=\mathcal{X}(t)+\alpha\tilde{\mathcal{A}}_{i}^{T}*\mathcal{R}(t+1)$\Comment{Solution
approximation with 1 $i$-th frontal slice}

\State $t=t+1$

\EndWhile \end{algorithmic} 
\end{algorithm}

\begin{figure}[t]
\centering \includegraphics[width=0.75\textwidth]{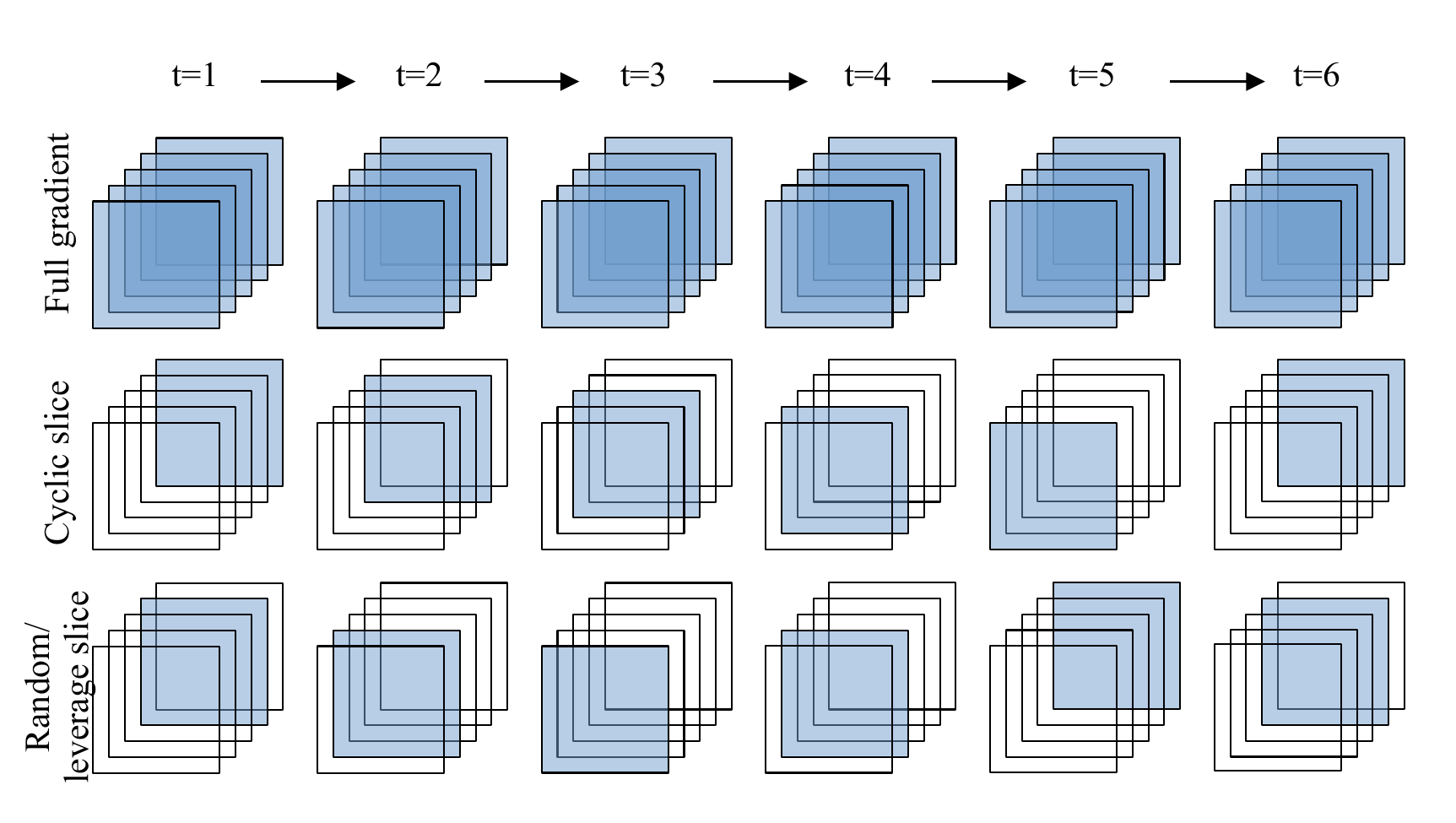}

\caption{\label{fig:gradient_schema}Algorithms \ref{alg:tsolve_full_gradient},
\ref{alg:tsolve_cyclic_slices}, \ref{alg:tsolve_random_slices} 
$t=1,2,3,4,5,6$ with a tensor $\mathcal{A}$ of frontal size $n=5$ and block size $s=1$.
We illustrate the 3-way tensor with 5 frontal slices and highlight
the slice at a given iteration in blue. %
}
\end{figure}

\subsection{Computational Considerations}
This section summarizes storage and computational complexity for the proposed methods. To highlight the impact of the number of frontal slices $n$, we let $m = \max \{n_1, n_2, n_3 \}$.

\begin{prop}
\label{prop:The-computational-complexity}The computational complexity
of t-product $\mathcal{A}*\mathcal{X}$ %
for $\mathcal{A}\in\mathbb{R}^{n_{1}\times n_{2}\times n}$ and $\mathcal{X}\in\mathbb{R}^{n_{2}\times n_{3}\times n}$. 
is $\mathcal{O}(n_1 n_2 n_3 n) \sim \mathcal{O}(m^3 n)$.
\end{prop}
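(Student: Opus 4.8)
The plan is to cost the t-product using the FFT-based algorithm described after Definition~\ref{defn:tprod}, rather than forming $\bcirc{\mathcal{A}}$ explicitly and multiplying densely, since the former is the standard efficient implementation. First I would break the computation into three stages and count operations in each. Stage one applies the DFT along the third (frontal) mode of both $\mathcal{A}$ and $\mathcal{X}$: there are $n_1 n_2$ mode-3 tubes in $\mathcal{A}$ and $n_2 n_3$ in $\mathcal{X}$, each of length $n$, so each length-$n$ FFT costs $\mathcal{O}(n \log n)$ and the stage costs $\mathcal{O}\bigl((n_1 n_2 + n_2 n_3)\,n \log n\bigr)$.

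Stage two performs the multiplication in the Fourier domain. Because the DFT diagonalizes the block-circulant structure, $\bcirc{\mathcal{A}}$ becomes block-diagonal with $n$ blocks, so the product reduces to $n$ independent matrix multiplications, each pairing an $n_1\times n_2$ frontal slice with an $n_2\times n_3$ slice at cost $\mathcal{O}(n_1 n_2 n_3)$. Summing over the $n$ slices gives $\mathcal{O}(n_1 n_2 n_3 n)$. Stage three applies the inverse DFT along the third mode of the $n_1\times n_3\times n$ result, costing $\mathcal{O}(n_1 n_3\, n \log n)$. Adding the three contributions gives a total of $\mathcal{O}\bigl(n_1 n_2 n_3 n + (n_1 n_2 + n_2 n_3 + n_1 n_3)\,n \log n\bigr)$.

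The final step is to identify the dominant term and pass to $m=\max\{n_1,n_2,n_3\}$. The matrix-multiplication term is bounded by $\mathcal{O}(m^3 n)$ while the transform terms are bounded by $\mathcal{O}(m^2\, n \log n)$. The main point to verify is that the $m^3 n$ term dominates, which holds whenever $m \gtrsim \log n$; this is the regime of interest, since the proposition is designed to highlight the effect of the mode sizes. Under this assumption the transform overhead is lower-order, and the total collapses to $\mathcal{O}(m^3 n)$, matching the stated bound $\mathcal{O}(n_1 n_2 n_3 n)\sim\mathcal{O}(m^3 n)$.

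I expect the only genuine subtlety to be this comparison of the FFT overhead against the pointwise matrix products; everything else is a routine operation count. It is worth stating explicitly that the FFT-based algorithm is the one being analyzed, because costing the naive dense product $\bcirc{\mathcal{A}}\,\unfold{\mathcal{X}}$ instead would yield $\mathcal{O}(n_1 n_2 n_3 n^2)$ and fail to match the claimed bound.
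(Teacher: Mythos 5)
Your proposal is correct and follows essentially the same route as the paper's proof: transform the $n_1 n_2$ and $n_2 n_3$ tubes via length-$n$ FFTs at cost $\mathcal{O}((n_1 n_2 + n_2 n_3)\,n\log n)$, perform $n$ slice-wise matrix products at cost $\mathcal{O}(n_1 n_2 n_3 n)$, and identify the latter as dominant. Your explicit accounting of the inverse-transform cost and of the regime in which the $\mathcal{O}(m^3 n)$ term dominates the $\mathcal{O}(m^2 n \log n)$ transform overhead is slightly more careful than the paper's, but it is a refinement rather than a different argument.
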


\begin{proof}
We start by transforming the tensors $\mathcal{A}$ and $\mathcal{X}$ into the Fourier Domain. The tensors $\mathcal{A}$ and $\mathcal{X}$ have $n_1 n_2$ and $n_2 n_3$ tubes, respectively, each of which cost $n \log n$ to transform to and from the Fourier Domain using FFT. Thus, these step costs $\mathcal{O}((n_1 n_2 + n_2 n_3) n \log n) $ operations. Then, $n$ matrix-matrix products are computed between frontal slices of $\widehat{\mathcal{A}}$ and $\widehat{\mathcal{X}}$, which costs $\mathcal{O}(n n_1 n_2 n_3)$. When $\mathcal{A}$ has $s$ nonzero frontal slices, the complexity of the transformation to and from the Fourier Domain reduces from a dependence on $n$ to a dependence on $s$. However, $n$ matrix-matrix products are still required since frontal slices of $\mathcal{X}$ may be dense. Thus, the computational complexity remains to be $\mathcal{O}(n n_1 n_2 n_3)$.

\end{proof}

\begin{prop}
The computational complexity of each iteration in Algorithm \ref{alg:tsolve_full_gradient} 
for $\mathcal{A}\in\mathbb{R}^{n_{1}\times n_{2}\times n}$,
$\mathcal{X}\in\mathbb{R}^{n_{2}\times n_{3}\times n}$ and $\mathcal{B}\in\mathbb{R}^{n_{1}\times n_{3}\times n}$ %
is $\mathcal{O}(n_1 n_2 n_3 n) \sim \mathcal{O}(m^3 n)$. 
The storage complexity of each iteration is $\mathcal{O}(n_{1}n_{2}n+n_{1}n_{3}n+n_{2}n_{3}n) \sim \mathcal{O}(m^2 n)$. 
\end{prop}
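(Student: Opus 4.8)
The plan is to decompose a single iteration of Algorithm \ref{alg:tsolve_full_gradient} into its elementary tensor operations and bound each one separately, invoking Proposition \ref{prop:The-computational-complexity} for the t-products. Each iteration performs two t-products---$\mathcal{A}*\mathcal{X}(t)$ in the residual update and $\mathcal{A}^{T}*\mathcal{R}(t+1)$ in the gradient step---together with one tensor subtraction, one scaled tensor addition, and the transpose $\mathcal{A}^{T}$. First I would apply Proposition \ref{prop:The-computational-complexity} to each t-product: since $\mathcal{A}\in\mathbb{R}^{n_{1}\times n_{2}\times n}$ and $\mathcal{X}(t)\in\mathbb{R}^{n_{2}\times n_{3}\times n}$, the product $\mathcal{A}*\mathcal{X}(t)$ costs $\mathcal{O}(n_{1}n_{2}n_{3}n)$; likewise $\mathcal{A}^{T}\in\mathbb{R}^{n_{2}\times n_{1}\times n}$ multiplied with $\mathcal{R}(t+1)\in\mathbb{R}^{n_{1}\times n_{3}\times n}$ costs $\mathcal{O}(n_{1}n_{2}n_{3}n)$ by the same proposition after relabeling modes.

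Next I would check that the remaining operations are of strictly lower order and hence absorbed. The subtraction $\mathcal{B}-\mathcal{A}*\mathcal{X}(t)$ is entrywise over a tensor of size $n_{1}n_{3}n$, the scaled addition $\mathcal{X}(t)+\alpha\,\mathcal{A}^{T}*\mathcal{R}(t+1)$ is entrywise over $n_{2}n_{3}n$ entries, and forming the transpose $\mathcal{A}^{T}$ reorders $n_{1}n_{2}n$ entries. Each of these is $\mathcal{O}(m^{2}n)$, dominated by the $\mathcal{O}(m^{3}n)$ t-product cost. Summing the dominant terms gives the per-iteration computational complexity $\mathcal{O}(n_{1}n_{2}n_{3}n)\sim\mathcal{O}(m^{3}n)$, where the asymptotic identity uses $m=\max\{n_{1},n_{2},n_{3}\}$ so that $n_{1}n_{2}n_{3}\leq m^{3}$.

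For the storage claim I would enumerate the tensors that must reside in memory within one iteration: the data tensor $\mathcal{A}$ (and its transpose) of $n_{1}n_{2}n$ entries, the measurement tensor $\mathcal{B}$ and residual $\mathcal{R}$ each of $n_{1}n_{3}n$ entries, and the iterate $\mathcal{X}$ of $n_{2}n_{3}n$ entries. Adding these yields $\mathcal{O}(n_{1}n_{2}n+n_{1}n_{3}n+n_{2}n_{3}n)\sim\mathcal{O}(m^{2}n)$, since each of the three products is at most $m^{2}$.

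I do not anticipate a serious obstacle here; the one point requiring care is the bookkeeping for the transposed t-product $\mathcal{A}^{T}*\mathcal{R}$, where I must verify that relabeling the modes of $\mathcal{A}^{T}$ still yields a t-product to which Proposition \ref{prop:The-computational-complexity} applies with the same $\mathcal{O}(n_{1}n_{2}n_{3}n)$ bound rather than a larger one. Since the transpose of a tensor in $\mathbb{R}^{n_{1}\times n_{2}\times n}$ lies in $\mathbb{R}^{n_{2}\times n_{1}\times n}$ by Definition \ref{defn:transpose}, the three involved mode sizes are exactly $\{n_{1},n_{2},n_{3}\}$ again, so the bound is preserved and the proof closes.
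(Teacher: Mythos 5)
Your proposal is correct and follows essentially the same route as the paper: both invoke Proposition \ref{prop:The-computational-complexity} to bound the two t-products $\mathcal{A}*\mathcal{X}(t)$ and $\mathcal{A}^{T}*\mathcal{R}(t+1)$ at $\mathcal{O}(n_{1}n_{2}n_{3}n)$ each, and both count the stored tensors $\mathcal{A}$, $\mathcal{X}(t)$, $\mathcal{R}(t+1)$ for the memory bound. Your additional checks---that the entrywise operations are lower order and that the transposed t-product keeps the same mode sizes---are sound and merely make explicit what the paper leaves implicit.
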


\begin{proof}
Using Proposition \ref{prop:The-computational-complexity}, the $\mathcal{A}*\mathcal{X}(t)$
and $\mathcal{A}^{T}*\mathcal{R}(t+1)$ both require  $\mathcal{O}(n_1 n_2 n_3 n)$.
During each iteration, we need to store $\mathcal{O}(n_{1}n_{2}n+n_{1}n_{3}n+n_{2}n_{3}n)$
arrays for $\mathcal{A}$, $\mathcal{X}(t)$, and $\mathcal{R}(t+1)$. 
\end{proof}

\begin{prop}
The computational complexity of each iteration in Algorithm \ref{alg:tsolve_cyclic_slices} 
for $\mathcal{A}\in\mathbb{R}^{n_{1}\times n_{2}\times n}$, $\mathcal{X}\in\mathbb{R}^{n_{2}\times n_{3}\times n}$
and $\mathcal{B}\in\mathbb{R}^{n_{1}\times n_{3}\times n}$ is $\mathcal{O}(n_1 n_2 n_3 n)\sim\mathcal{O}(m^3 n)$.
The storage
complexity of each iteration is $\mathcal{O}(n_1 n_2 s + n_2 n_3 n^2/s + n_1 n_3 n) \sim \mathcal{O}(m^2 n (n/s))$.
\end{prop}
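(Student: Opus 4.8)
The plan is to bound separately the per-iteration arithmetic and the per-iteration storage, treating the three t-products and the two tensor additions in the update as the only nontrivial operations. For the arithmetic bound I would lean directly on Proposition~\ref{prop:The-computational-complexity}; for the storage bound the crux is to count how many past iterates of $\mathcal{X}$ the residual approximation forces us to retain.

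\textbf{Computation.} Each pass through the while-loop of Algorithm~\ref{alg:tsolve_cyclic_slices} performs the t-products $\tilde{\mathcal{A}}_{i}^{s}*\mathcal{X}(t)$, $\tilde{\mathcal{A}}_{i}^{s}*\mathcal{X}(t-n+1)$, and $(\tilde{\mathcal{A}}_{i}^{s})^{T}*\mathcal{R}(t+1)$, together with two tensor additions forming $\mathcal{R}(t+1)$ and $\mathcal{X}(t+1)$. First I would invoke Proposition~\ref{prop:The-computational-complexity}: although $\tilde{\mathcal{A}}_{i}^{s}$ has only $s$ nonzero frontal slices, so the FFTs along the third mode cost only $\mathcal{O}(n_1 n_2 s \log s)$ rather than $\mathcal{O}(n_1 n_2 n \log n)$, the $n$ Fourier-domain matrix--matrix products against the (generally dense) frontal slices of $\mathcal{X}$ and $\mathcal{R}$ are still required, each costing $\mathcal{O}(n_1 n_2 n_3)$. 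Hence every t-product costs $\mathcal{O}(n_1 n_2 n_3 n)$, and since there are only a constant number of them while the tensor additions are of the lower order $\mathcal{O}(n_1 n_3 n + n_2 n_3 n)$, the per-iteration cost is $\mathcal{O}(n_1 n_2 n_3 n) \sim \mathcal{O}(m^3 n)$.

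\textbf{Storage.} Here I would account for three contributions. The active sub-block $\tilde{\mathcal{A}}_{i}^{s}$ need only be stored through its $s$ nonzero frontal slices, each $n_1\times n_2$, contributing $\mathcal{O}(n_1 n_2 s)$; the residual $\mathcal{R}(t+1)$ contributes $\mathcal{O}(n_1 n_3 n)$. The dominant term comes from the residual approximation: since the cyclic index $i=\mathrm{mod}(t,n/s)+1$ ranges over $n/s$ blocks, each block recurs every $n/s$ iterations, so evaluating the contribution of block $i$ from when it was last visited requires keeping a sliding window of the $n/s$ most recent iterates of $\mathcal{X}$. As each iterate has size $\mathcal{O}(n_2 n_3 n)$, this history costs $\mathcal{O}(n_2 n_3 n \cdot n/s) = \mathcal{O}(n_2 n_3 n^2/s)$. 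Summing the three contributions gives $\mathcal{O}(n_1 n_2 s + n_2 n_3 n^2/s + n_1 n_3 n) \sim \mathcal{O}(m^2 n (n/s))$, where one checks that the middle term dominates since $s\leq n$.

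The step I expect to be the main obstacle is the bookkeeping for the history term: one must argue that the cyclic schedule over $n/s$ blocks makes $n/s$ (rather than $n$) stored iterates of $\mathcal{X}$ both necessary and sufficient, reconciling the index $t-n+1$ written in the pseudocode with the block recurrence period $n/s$, so that the factor $n^2/s$ is pinned down correctly and collapses to the single-slice bound $n^2$ when $s=1$. The arithmetic bound is comparatively routine once Proposition~\ref{prop:The-computational-complexity} is applied block by block, the only subtlety being to resist claiming a speedup from the sparsity of $\tilde{\mathcal{A}}_{i}^{s}$, which reduces the FFT cost but not the count of Fourier-domain matrix products.
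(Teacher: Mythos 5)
Your proposal is correct and follows essentially the same route as the paper: the arithmetic bound is inherited from Proposition~\ref{prop:The-computational-complexity} (sparsity of $\tilde{\mathcal{A}}_{i}^{s}$ cuts the FFT cost but not the $n$ Fourier-domain matrix products), and the storage bound is the sum of $\mathcal{O}(n_1 n_2 s)$ for the active block, $\mathcal{O}(n_1 n_3 n)$ for the residual, and $n/s$ retained iterates of $\mathcal{X}$ giving the dominant $\mathcal{O}(n_2 n_3 n^2/s)$ term. If anything, you are more careful than the paper's one-line argument, in particular in flagging the mismatch between the $\mathcal{X}(t-n+1)$ index in the pseudocode and the block recurrence period $n/s$, which the paper glosses over.
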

\begin{proof}
    In Algorithm~\ref{alg:tsolve_cyclic_slices}, only $s$ frontal slices of $\mathcal{A}$ are utilized at a time, thus, the memory complexity for  $\tilde{\mathcal{A}}_i$ is $\mathcal{O}(n_1n_2s)$ instead of $\mathcal{O}(n_1n_2n)$. Furthermore, $n/s$ historical approximations of $\mathcal{X}$ need to be stored. Thus, the total memory complexity is $\mathcal{O}(n_1 n_2 s + n_2 n_3 n^2/s + n_1 n_3 n)$.
\end{proof}

\begin{cor}
 The computational complexity of each iteration in Algorithm \ref{alg:tsolve_cyclic_slices}
($s=1$) and \ref{alg:tsolve_random_slices}  
for $\mathcal{A}\in\mathbb{R}^{n_{1}\times n_{2}\times n}$, $\mathcal{X}\in\mathbb{R}^{n_{2}\times n_{3}\times n}$
and $\mathcal{B}\in\mathbb{R}^{n_{1}\times n_{3}\times n}$ is $\mathcal{O}(n_1 n_2 n_3 n) \sim\mathcal{O}(m^3 n)$, 
The storage
complexity of each iteration is $\mathcal{O}(n_1 n_2 + n_2 n_3 n^2 + n_1 n_3 n)\sim \mathcal{O}(m^2 n^2)$. 
\end{cor}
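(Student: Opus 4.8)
The plan is to obtain the corollary as a direct specialization of the preceding Proposition governing Algorithm~\ref{alg:tsolve_cyclic_slices} to the block size $s=1$, and then to verify that Algorithm~\ref{alg:tsolve_random_slices} inherits the identical per-iteration cost profile. For the cyclic algorithm, I would simply substitute $s=1$ into its storage bound $\mathcal{O}(n_1 n_2 s + n_2 n_3 n^2/s + n_1 n_3 n)$, which collapses to $\mathcal{O}(n_1 n_2 + n_2 n_3 n^2 + n_1 n_3 n)$; since the computational bound $\mathcal{O}(n_1 n_2 n_3 n)$ there carries no dependence on $s$, it transfers verbatim. Under $m=\max\{n_1,n_2,n_3\}$ the two bounds condense to $\mathcal{O}(m^3 n)$ and $\mathcal{O}(m^2 n^2)$ respectively.

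For Algorithm~\ref{alg:tsolve_random_slices}, I would analyze the per-iteration arithmetic first. Each of the residual update \eqref{eq:resid_approx_random} and the solution update invokes t-products against $\tilde{\mathcal{A}}_{i}$, a tensor carrying a single nonzero frontal slice. Here the key point, which I would draw directly from Proposition~\ref{prop:The-computational-complexity}, is that although the sparsity of $\tilde{\mathcal{A}}_{i}$ cheapens the forward and inverse Fourier transform of the data tensor from a dependence on $n$ to a dependence on $s=1$, the $n$ frontal matrix--matrix products in the Fourier domain remain, because the iterate $\mathcal{X}$ is dense. Consequently each t-product still costs $\mathcal{O}(n_1 n_2 n_3 n)$, matching the $s=1$ cyclic case and the full-gradient baseline of Algorithm~\ref{alg:tsolve_full_gradient}.

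The storage count for the random scheme requires more care, and this is where I expect the only genuine subtlety to lie. The update \eqref{eq:resid_approx_random} references $\mathcal{X}(k_i)$, the iterate from the last time slice $i$ was drawn; since any of the $n$ slices may be revisited at an unpredictable iteration, one must simultaneously retain one historical approximation of $\mathcal{X}$ per slice, i.e.\ $n$ tensors each of size $n_2 n_3 n$, contributing the $n_2 n_3 n^2$ term. Adding the sparse storage $n_1 n_2$ for the single active slice of $\tilde{\mathcal{A}}_{i}$, the residual $\mathcal{R}$ of size $n_1 n_3 n$, and the negligible bookkeeping vector $k\in\mathbb{R}^n$, the total is $\mathcal{O}(n_1 n_2 + n_2 n_3 n^2 + n_1 n_3 n)\sim\mathcal{O}(m^2 n^2)$, identical to the $s=1$ cyclic bound.

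The main obstacle is thus conceptual rather than computational: recognizing that random selection neither reduces nor inflates the historical-memory footprint relative to cyclic FSD. The cyclic scheme stores $n/s=n$ iterates accumulated over one full sweep, whereas the random scheme stores one iterate per slice precisely because revisits are unscheduled; these two counts coincide. Once this correspondence is established, the remaining estimates are routine appeals to Proposition~\ref{prop:The-computational-complexity} and to the preceding Proposition.
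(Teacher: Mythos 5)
Your proposal is correct and follows essentially the same route as the paper: the corollary is obtained by specializing the preceding proposition to $s=1$, and the paper's own justification for the random variant (storing $n$ historical iterates $\mathcal{X}(k_i)$, and the t-product cost remaining $\mathcal{O}(n_1 n_2 n_3 n)$ because $\mathcal{X}$ is dense even when $\tilde{\mathcal{A}}_i$ has one nonzero frontal slice) matches your reasoning exactly. If anything, your write-up is more explicit than the paper, which leaves the corollary unproved beyond the substitution.
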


Since $\tilde{\mathcal{A}}_i^{s}$ may be dense, the use of frontal slices does not necessarily reduce computational complexity, but reduce the communication cost.
\begin{cor} Suppose $\mathcal{A}$ has $k \ll n$ nonzero frontal slices then,
 the computational complexity of each iteration in Algorithm \ref{alg:tsolve_cyclic_slices}
(with $s=1$) and \ref{alg:tsolve_random_slices} is $\mathcal{O}(n_1 n_2 n_3 n) \sim\mathcal{O}(m^3 n)$,
for $\mathcal{A}\in\mathbb{R}^{n_{1}\times n_{2}\times n}$, $\mathcal{X}\in\mathbb{R}^{n_{2}\times n_{3}\times n}$
and $\mathcal{B}\in\mathbb{R}^{n_{1}\times n_{3}\times n}$.
The storage
complexity of each iteration is $\mathcal{O}(n_1 n_2 + n_2 n_3 k^2 + n_1 n_3 n) \sim \mathcal{O}(m^2 k^2)$. 
\end{cor}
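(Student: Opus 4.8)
The plan is to split each per-iteration cost into a computational part and a storage part and treat them separately, reusing the accounting already established for Algorithm~\ref{alg:tsolve_cyclic_slices} in the general block proposition, specialized to $s=1$ and then refined by the hypothesis that $\mathcal{A}$ has only $k \ll n$ nonzero frontal slices. For the computational complexity I would argue that nothing improves asymptotically over the dense case. The only nontrivial operations per iteration are the two t-products $\tilde{\mathcal{A}}_{i} * \mathcal{X}(\cdot)$ appearing in the residual update and $\tilde{\mathcal{A}}_{i}^{T} * \mathcal{R}(t+1)$ appearing in the solution update. By Proposition~\ref{prop:The-computational-complexity} and the discussion following it, even though $\tilde{\mathcal{A}}_{i}$ has a single (hence at most $k$) nonzero frontal slices, so that the forward and inverse FFTs along the third mode depend on $k$ rather than on $n$, the iterates $\mathcal{X}(\cdot)$ and the residual $\mathcal{R}(\cdot)$ remain dense along the third mode. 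Consequently $n$ frontal matrix--matrix products are still required in the Fourier domain, each costing $\mathcal{O}(n_1 n_2 n_3)$, so every t-product, and hence every iteration, costs $\mathcal{O}(n_1 n_2 n_3 n) \sim \mathcal{O}(m^3 n)$. This part is essentially a direct appeal to the cited proposition.

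For the storage complexity I would decompose the footprint into three pieces. First, since $s=1$, only a single active frontal slice $\tilde{\mathcal{A}}_{i}$ of $\mathcal{A}$ is needed at a time, costing $\mathcal{O}(n_1 n_2)$. Second, the residual $\mathcal{R}(t+1) \in \mathbb{R}^{n_1 \times n_3 \times n}$ is dense and must be stored in full, costing $\mathcal{O}(n_1 n_3 n)$. Third, and this is the crux, the residual-approximation step requires access to historical iterates of $\mathcal{X}$. The key observation is that the sparsity of $\mathcal{A}$ collapses this history in two ways: only $k$ of the $n$ cyclic updates are nontrivial, since $\mathcal{X}(\cdot)$ is unchanged on the $n-k$ iterations where $\tilde{\mathcal{A}}_{i}=0$, so at most $k$ distinct historical iterates ever need to be retained, rather than the $n$ retained in the dense $s=1$ corollary; and the quantity actually consumed by the update is the product $\tilde{\mathcal{A}}_{i}*\mathcal{X}(k_i)$, whose support along the third mode is governed by the $k$-sparse slice pattern of $\tilde{\mathcal{A}}_{i}$. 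I would make this precise by tracking, in the circulant-shift representation of the t-product, how the single nonzero slice of $\tilde{\mathcal{A}}_{i}$ restricts the effective per-iterate storage to $\mathcal{O}(n_2 n_3 k)$. Multiplying the $k$ retained iterates by this per-iterate size yields the claimed $\mathcal{O}(n_2 n_3 k^2)$ term, and summing the three contributions gives $\mathcal{O}(n_1 n_2 + n_2 n_3 k^2 + n_1 n_3 n)$, which reduces to $\mathcal{O}(m^2 k^2)$ under $m = \max\{n_1,n_2,n_3\}$.

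The main obstacle I anticipate is precisely this per-iterate reduction from $\mathcal{O}(n_2 n_3 n)$ to $\mathcal{O}(n_2 n_3 k)$: a naive count retains $k$ full historical copies of $\mathcal{X}$ and only yields $\mathcal{O}(n_2 n_3 n k)$. Closing the gap to $k^2$ requires a careful argument that one need not store the full historical iterate but only the information that survives multiplication by the $k$-sparse tensor $\tilde{\mathcal{A}}_{i}$ in the residual update, i.e., that the relevant support, or a compressed representation indexed by the $k$ active slice positions, is of size $\mathcal{O}(n_2 n_3 k)$. Verifying that this compressed representation is both sufficient to carry out every subsequent update and is itself maintainable within the stated budget is the delicate bookkeeping step; everything else is a routine specialization of the $s=1$ case of the preceding proposition together with Proposition~\ref{prop:The-computational-complexity}.
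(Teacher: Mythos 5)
First, a point of comparison: the paper states this corollary without an explicit proof, so the only reasoning available to check against is the proof of the preceding proposition, whose storage bound $\mathcal{O}(n_1n_2s+n_2n_3n^2/s+n_1n_3n)$ is obtained as ($n/s$ retained historical iterates) times ($n_2n_3n$ entries each), plus one active block of $\mathcal{A}$ and the residual. Your computational-complexity argument is exactly the paper's own (from Proposition~\ref{prop:The-computational-complexity}): sparsity of $\mathcal{A}$ along the third mode cheapens the FFTs but not the $n$ frontal matrix--matrix products against a dense $\mathcal{X}$ or $\mathcal{R}$, so the per-iteration cost stays $\mathcal{O}(n_1n_2n_3n)$. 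That part is fine.

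The gap is in the storage bound, and you have correctly located it but not closed it --- and the step you sketch cannot be closed as stated. To reach $\mathcal{O}(n_2n_3k^2)$ you need each retained historical iterate to occupy $\mathcal{O}(n_2n_3k)$ rather than $\mathcal{O}(n_2n_3n)$, and you propose to get this by keeping only the part of $\mathcal{X}(k_i)$ that ``survives multiplication by the $k$-sparse tensor $\tilde{\mathcal{A}}_i$,'' with support ``indexed by the $k$ active slice positions.'' But the t-product is a circular convolution along the third mode: if $\tilde{\mathcal{A}}_i$ has the single nonzero frontal slice $A_i$, then the $j$-th frontal slice of $\tilde{\mathcal{A}}_i*\mathcal{X}$ equals $A_iX_{((j-i)\bmod n)+1}$, so \emph{every} frontal slice of $\mathcal{X}$ is read (each output slice consumes a different input slice), and the product itself is dense along the third mode. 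There is therefore no $k$-slice support restriction on the iterate you must retain, nor on the cached product you could store instead (which costs $\mathcal{O}(n_1n_3n)$ apiece). The honest count, following the proposition's own accounting, is that sparsity of $\mathcal{A}$ reduces the \emph{number} of retained iterates from $n$ to $k$ but not their size, giving a middle term of $\mathcal{O}(n_2n_3nk)$ rather than $\mathcal{O}(n_2n_3k^2)$; the second factor of $k$ would require an additional compression argument that neither you nor the paper supplies and that the circulant structure appears to rule out. A defensible version of the corollary is $\mathcal{O}(n_1n_2+n_2n_3nk+n_1n_3n)$, and the discrepancy with the stated $k^2$ (whose asymptotic summary $\mathcal{O}(m^2k^2)$ also silently drops the $n_1n_3n\sim m^2n$ term) should be flagged rather than reproduced.
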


\section{\label{sec:Convergence-Analysis}Convergence Analysis}

 Theorem~\ref{thm:conv-cyclic} provides the main convergence guarantees for the cyclic FSD algorithm (Algorithm~\ref{alg:tsolve_cyclic_slices}) when $s=1$. Theorem~\ref{thm:conv-cyc-n2} presents the special case in which $n=2$, and Corollary~\ref{cor:conv_block} extends our results to the block FSD case ($s > 1$). For an appropriately chosen learning rate $\alpha>0$, show that $\mathcal{X}(k)$
converges to the solution $\mathcal{X}_{*}$ and obtain a convergence
result in the following theorem. This is accomplished by showing approximation error $\mathcal{E}(t) = \|\mathcal{X}(t) - \mathcal{X}^* \|_F^2$ converges to 0, where $\mathcal{X}_{*}$ is the actual solution to the consistent
system \eqref{eq:tensorlinsys}. Since we are handling a finite dimensional system, it is obvious that $0<\mathcal{E}(0)<\infty$ by definiteion.

\begin{thm}[Convergence of cyclic-FSD]
\label{thm:conv-cyclic} Let $\mathcal{A}*\mathcal{X}=\mathcal{B}$
be a consistent tensor system with unique solution $\mathcal{X}^{*}$
where $\mathcal{A}\in\mathbb{R}^{n_{1}\times n_{2}\times n}$ and
$\mathcal{B}\in\mathbb{R}^{n_{1}\times n_{3}\times n.}$ Define $\epsilon_{t}$
to be the coefficient such that $\mathcal{E}(t+1)\leq\epsilon_{t}\mathcal{E}(0)$
and let 
\begin{align}
\kappa & =\max_{i=1,...n}\|\mathcal{I}-\alpha\tilde{\mathcal{A}}_{i}^{T}*\tilde{\mathcal{A}}_{i}\|_{op},\label{eq:kappa}\\
\mu & =\max_{\substack{i,j=1,...,n\\
i\neq j
}
}\|\tilde{\mathcal{A}}_{i}^{T}*\tilde{\mathcal{A}}_{j}\|_{op}.\label{eq:mu}
\end{align}
If the learning rate $\alpha$ is chosen such that   $ \kappa + \alpha \mu (n-1)<1,$
then $\mathcal{E}(t) \rightarrow 0$ as $t \rightarrow \infty$.%

\end{thm}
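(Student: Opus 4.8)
The plan is to track the error tensor $\mathcal{D}(t) = \mathcal{X}(t) - \mathcal{X}^*$, for which $\mathcal{E}(t) = \|\mathcal{D}(t)\|_F^2$, and to show that its Frobenius norm contracts geometrically over each cycle of $n$ iterations. First I would obtain a closed form for the approximate residual $\mathcal{R}(t+1)$ by telescoping its update rule. Because the index $i(t)$ is cyclic with period $n$, each slice $j$ is subtracted once per cycle while the add-back term cancels the subtraction made one cycle earlier; the resulting telescoping sum collapses to
\[
\mathcal{R}(t+1) = \mathcal{B} - \sum_{j=1}^{n} \tilde{\mathcal{A}}_j * \mathcal{X}(\ell_j(t)),
\]
where $\ell_j(t)$ is the most recent iteration $\le t$ at which slice $j$ was used. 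Over any $n$ consecutive steps each slice is touched exactly once, so $\ell_j(t) \in \{t-n+1,\ldots,t\}$ for all $j$, and $\ell_{i(t)}(t) = t$ for the currently active slice. This makes precise the decomposition $\mathcal{R}(t+1) = \mathcal{B} - \mathcal{A}*\mathcal{X}(t-n+1) + \mathcal{E}$ stated before the theorem.

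Next I would convert this into an error recursion. Using consistency $\mathcal{B} = \mathcal{A}*\mathcal{X}^*$ and the slice decomposition $\mathcal{A} = \sum_j \tilde{\mathcal{A}}_j$, one finds $\tilde{\mathcal{A}}_{i(t)}^T * \mathcal{R}(t+1) = -\sum_{j=1}^n \tilde{\mathcal{A}}_{i(t)}^T * \tilde{\mathcal{A}}_j * \mathcal{D}(\ell_j(t))$. Substituting into the update $\mathcal{X}(t+1) = \mathcal{X}(t) + \alpha\,\tilde{\mathcal{A}}_{i(t)}^T * \mathcal{R}(t+1)$ and isolating the active term $j = i(t)$ (for which $\ell_{i(t)}(t) = t$) yields
\[
\mathcal{D}(t+1) = \left(\mathcal{I} - \alpha\,\tilde{\mathcal{A}}_{i(t)}^T * \tilde{\mathcal{A}}_{i(t)}\right) * \mathcal{D}(t) - \alpha \sum_{j \ne i(t)} \tilde{\mathcal{A}}_{i(t)}^T * \tilde{\mathcal{A}}_j * \mathcal{D}(\ell_j(t)).
\]
Taking Frobenius norms and applying Remark~\ref{rem:op-fro} together with the definitions \eqref{eq:kappa} and \eqref{eq:mu} gives $\|\mathcal{D}(t+1)\|_F \le \kappa\,\|\mathcal{D}(t)\|_F + \alpha\mu \sum_{j \ne i(t)} \|\mathcal{D}(\ell_j(t))\|_F$, a bound involving $n-1$ delayed error terms.

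The main obstacle is that this is not a one-step contraction: $\mathcal{D}(t+1)$ is coupled to an entire window of past iterates through the delayed indices $\ell_j(t)$, so iterating the inequality directly does not yield decay. I would resolve this with the window functional $M(t) = \max_{t-n+1 \le \tau \le t} \|\mathcal{D}(\tau)\|_F$. Since every $\ell_j(t)$ lies in $\{t-n+1,\ldots,t\}$, each term on the right is at most $M(t)$, so
\[
\|\mathcal{D}(t+1)\|_F \le \bigl(\kappa + \alpha\mu(n-1)\bigr) M(t) = q\,M(t), \qquad q := \kappa + \alpha\mu(n-1) < 1.
\]
Since $q<1$, this bound holds for every $t$ and immediately shows $M$ is non-increasing; consequently each error generated over the next $n$ steps is at most $q\,M(t)$, giving $M(t+n) \le q\,M(t)$. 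Iterating, $M(t+kn) \le q^k M(t) \to 0$, which forces $\|\mathcal{D}(t)\|_F \le M(t) \to 0$ and hence $\mathcal{E}(t) \to 0$. Reading the coefficient $\epsilon_t$ off the relation $M(t+kn)\le q^k M(t)$ supplies the quantitative rate asserted in the statement.
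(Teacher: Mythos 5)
Your derivation coincides with the paper's through the hard part: you recover the closed form for the approximate residual (the paper's Eq.~\eqref{eq:residual_recursion_general}) by the same cyclic telescoping, and your delayed error recursion $\|\mathcal{D}(t+1)\|_F \le \kappa\|\mathcal{D}(t)\|_F + \alpha\mu\sum_{j\neq i(t)}\|\mathcal{D}(\ell_j(t))\|_F$ is exactly Lemma~\ref{lem:error}. Where you genuinely diverge is in how the delayed coupling is resolved. The paper introduces the recursively defined coefficients $\epsilon_t$ of Eq.~\eqref{eq:eps}, proves $\mathcal{E}(t)\le\epsilon_t\mathcal{E}(0)$ by induction (Lemma~\ref{lem:contract}), shows $\epsilon_{t+1}<\epsilon_t$ (Lemma~\ref{lem:decrease}), and concludes from monotonicity. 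You instead use the sliding-window functional $M(t)=\max_{t-n+1\le\tau\le t}\|\mathcal{D}(\tau)\|_F$, observe that every delayed index lies inside the window so that $\|\mathcal{D}(t+1)\|_F\le qM(t)$ with $q=\kappa+\alpha\mu(n-1)<1$, and deduce $M(t+n)\le qM(t)$. This is correct (the non-increase of $M$ plus the one-step bound gives the per-cycle contraction cleanly, and the convention $\mathcal{X}(t)=0$ for $t\le 0$ makes $M(0)=\|\mathcal{X}^*\|_F<\infty$), and it buys two things the paper's route does not: an explicit geometric rate $q^{\lfloor t/n\rfloor}$, and a logically tighter conclusion --- a positive decreasing sequence need not tend to zero, so the paper's final step from ``$\epsilon_t$ is decreasing'' to ``$\mathcal{E}(t)\to 0$'' is weaker than your contraction argument, which forces the limit to be zero. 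The one point to be careful about, which you share with the paper rather than introduce yourself, is that the add-back term $\tilde{\mathcal{A}}_{i(t)}*\mathcal{X}(t-n+1)$ in the algorithm cancels the subtraction from the previous cycle only up to an index shift; both you and the paper work with the intended closed form of the residual rather than the literal update as written.
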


\begin{proof}
See Appendix~\ref{subsec:conv-cyclic}. 
\end{proof}
\begin{figure}[t]
\centering 

\begin{tabular}{ccc}
\toprule 
\multicolumn{3}{c}{$(n_{1},n_{2},n_{3})=(100,10,10)$}\tabularnewline
\midrule 
$n=2$ & $n=5$ & $n=10$\tabularnewline
\midrule
\midrule 
\includegraphics[width=0.3\textwidth]{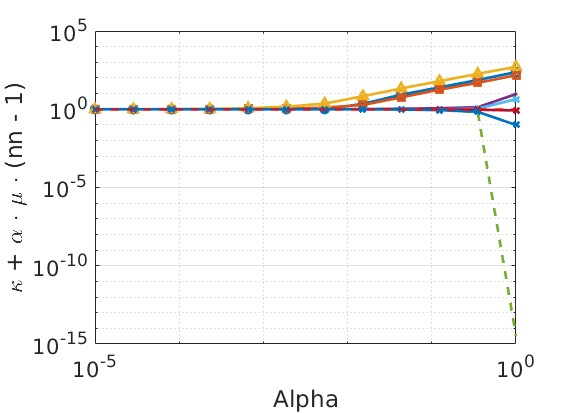} & \includegraphics[width=0.3\textwidth]{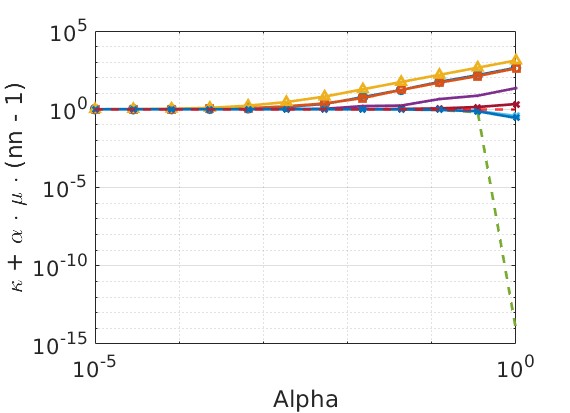} & \includegraphics[width=0.3\textwidth]{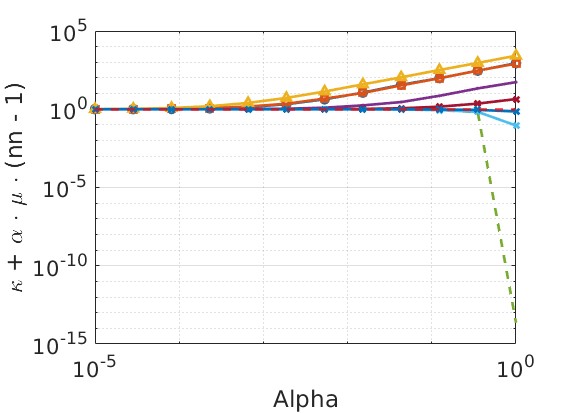} \tabularnewline
\midrule 
\multicolumn{3}{c}{\includegraphics[width=0.99\textwidth]{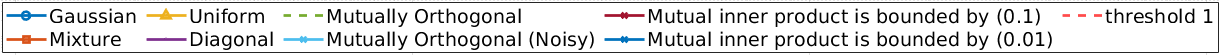} }\tabularnewline
\bottomrule
\end{tabular}

\caption{\label{fig:alpha_bound}The bound of learning rate $\alpha$ as shown in Theorem \ref{thm:conv-cyclic} for Algorithm~\ref{alg:tsolve_cyclic_slices}
to tensor $\mathcal{A}$ with (1) i.i.d Gaussian entries (2) i.i.d. Unif{[}0,1{]}
entries, and (3) an additive mixture between Gaussian and uniform entries. We also provide synthetic cases where $\mathcal{A}$ has (4) frontal slices, which are each diagonal matrices (5) mutually orthogonal frontal slices (w.r.t. matrix inner product) (6) mutual inner products between frontal slices are bounded by 0.1 (7) mutual inner products between frontal slices are bounded by 0.01  (8) mutually orthogonal frontal slices with Gaussian noise.}

\end{figure}

Since $\kappa$ is also a function of learning rate $\alpha$, we
want to ensure that assumption $\kappa + \alpha \mu (n-1) < 1$  in Theorem~\ref{thm:conv-cyclic} is feasible. When $\mu=0$, the frontal slices of the coefficient tensor $\mathcal{A}$ are mutually orthogonal frontal slices (w.r.t. matrix inner product). In this case, the sufficient condition in Theorem \ref{thm:conv-cyclic} becomes trivial when $\alpha$ is selected so that $\kappa<1$. This gives us a wide range of choices of $\alpha$ as shown by Figure \ref{fig:alpha_bound} case (5). Another extreme occurs when the frontal slices are each diagonal matrices in case (4), making $\kappa$ exceed 1 easily, and we can see that such a situation will always violate the sufficient bound. 

Beyond cases (4) and (5), cases (6) and (7) allow for approximate orthogonality. Here, the frontal slices are almost orthogonal, up to a matrix inner product threshold. As the frontal size $n$ increases, we need to set this inner product threshold to be smaller such that $\mu(n-1)$ falls in a reasonable range. Another scenario case (8) is created by perturbing the case (4) using a standard Gaussian noise tensor (like case (1)) multiplied by $\frac{1}{n^3}$, also creating a scenario where our sufficient bound for $\alpha$ can be met.

Unfortunately, the more usual cases like (1) to (3) in Figure~\ref{fig:alpha_bound} will not satisfy our sufficient condition in Theorem \ref{thm:conv-cyclic}; yet we will see in the next section (i.e., Figure \ref{fig:Performance-of-Algorithm-diffsystem}) that even if our sufficient condition on the learning rate $\alpha$ does not hold, Algorithm \ref{alg:tsolve_cyclic_slices} and its variants still yields reasonable convergence rates with fixed $\alpha$. This makes it a well-defined future work to find the necessary condition for the learning rate $\alpha$.

Corollary~\ref{cor:conv_block} presents the convergence of the blocked case $2\leq s<n$ in Algorithm~\ref{alg:tsolve_cyclic_slices}. 
\begin{cor}
\label{cor:conv_block}For $n\geq2$ and $2\leq s<n$ in Algorithm~\ref{alg:tsolve_cyclic_slices},
we define $\tilde{\mathcal{A}}_{i}^{s}\coloneqq\tilde{\mathcal{A}}_{i\cdot(s-1)+1}+\tilde{\mathcal{A}}_{i\cdot(s-1)+2}+\cdots+\tilde{\mathcal{A}}_{i\cdot s}$
to be the sum of $s$ padded slices and
\begin{align}
\kappa(s) & =\max_{i=1,...n/s}\|\mathcal{I}-\alpha\tilde{\mathcal{A}}_{i}^{sT}*\tilde{\mathcal{A}}_{i}^{s}\|_{op},\label{eq:kappa-s}\\
\mu(s) & =\max_{\substack{i,j=1,...,n/s\\
i\neq j
}
}\|\tilde{\mathcal{A}}_{i}^{sT}*\tilde{\mathcal{A}}_{j}^{s}\|_{op}.\label{eq:mu-s}
\end{align}
Suppose that the learning rate $\alpha$ is chosen such that 
$$ \kappa(s) + \alpha \mu(s) (n/s-1)<1.$$
Then, we have $\lim_{t\rightarrow\infty}\mathcal{E}(t)=0$.
\end{cor}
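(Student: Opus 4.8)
The plan is to obtain Corollary~\ref{cor:conv_block} directly from Theorem~\ref{thm:conv-cyclic} by a relabeling argument that treats the $n/s$ blocks as the effective slices of a reduced system. First I would record the algebraic decomposition $\mathcal{A}=\sum_{i=1}^{n/s}\tilde{\mathcal{A}}_i^s$: because the blocks $\{\tilde{\mathcal{A}}_i^s\}_{i=1}^{n/s}$ partition $\{1,\dots,n\}$ into $n/s$ consecutive groups of $s$ padded slices, regrouping the single-slice identity $\mathcal{A}=\sum_{i=1}^n\tilde{\mathcal{A}}_i$ yields the block identity. This decomposition is the only structural property of the summands that the proof of Theorem~\ref{thm:conv-cyclic} uses.

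Next I would check that Algorithm~\ref{alg:tsolve_cyclic_slices} with block size $s$ is, after this relabeling, an instance of single-slice cyclic FSD. Its index $i=\mathrm{mod}(t,n/s)+1$ cycles through $\{1,\dots,n/s\}$, so the cycle length is $n/s$, and its updates
\begin{equation*}
\mathcal{R}(t+1)=\mathcal{R}(t)-\tilde{\mathcal{A}}_i^s*\mathcal{X}(t)+\tilde{\mathcal{A}}_i^s*\mathcal{X}(t-n/s+1),\qquad \mathcal{X}(t+1)=\mathcal{X}(t)+\alpha(\tilde{\mathcal{A}}_i^s)^T*\mathcal{R}(t+1)
\end{equation*}
coincide with the single-slice iterates under the substitution $\tilde{\mathcal{A}}_i\mapsto\tilde{\mathcal{A}}_i^s$ and $n\mapsto n/s$. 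Consequently $\kappa(s)$ and $\mu(s)$ from \eqref{eq:kappa-s}--\eqref{eq:mu-s} are exactly the analogues of $\kappa,\mu$ in \eqref{eq:kappa}--\eqref{eq:mu} for the reduced system, and the hypothesis $\kappa(s)+\alpha\mu(s)(n/s-1)<1$ is precisely the hypothesis of Theorem~\ref{thm:conv-cyclic} with $n$ replaced by $n/s$.

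With this identification, I would re-run the error recursion of Theorem~\ref{thm:conv-cyclic} verbatim. Over one full cycle of length $n/s$, the delayed-residual telescoping yields a per-cycle contraction in which the active block contributes a diagonal factor bounded by $\kappa(s)=\max_i\|\mathcal{I}-\alpha(\tilde{\mathcal{A}}_i^s)^T*\tilde{\mathcal{A}}_i^s\|_{op}$, while the remaining $n/s-1$ blocks contribute off-diagonal cross terms each bounded through Remark~\ref{rem:op-fro} by $\alpha\mu(s)$; summing gives the factor $\kappa(s)+\alpha\mu(s)(n/s-1)$, which the hypothesis forces below $1$, so $\mathcal{E}(t)\to0$.

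The main thing to verify — and the only place the reduction could fail — is that the proof of Theorem~\ref{thm:conv-cyclic} never invokes the single-nonzero-slice structure of $\tilde{\mathcal{A}}_i$, using only the decomposition $\mathcal{A}=\sum_i\tilde{\mathcal{A}}_i$ and the operator-norm quantities $\kappa,\mu$; since both transfer verbatim to the blocks, the reduction is legitimate. The one bookkeeping subtlety is to index the delayed iterate by the correct cycle length $n/s$, so that the telescoping recovers the exact block residual every $n/s$ steps and the count of off-diagonal neighbors is exactly $n/s-1$.
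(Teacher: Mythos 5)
Your proposal is correct and follows essentially the same route as the paper: the paper's Appendix proof simply re-runs the residual recursion, the bound $\mathcal{E}(t+1)\leq\kappa(s)\mathcal{E}(t)+\alpha\mu(s)\sum_{i=1}^{n/s-1}\mathcal{E}(t-i)$, and the decreasing-coefficient argument of Lemmas~\ref{lem:error}--\ref{lem:decrease} with $\tilde{\mathcal{A}}_i^s$, $n/s$, $\kappa(s)$, $\mu(s)$ in place of $\tilde{\mathcal{A}}_i$, $n$, $\kappa$, $\mu$, which is exactly your relabeling reduction made explicit. You also correctly flag the one real bookkeeping point (the delayed iterate must be indexed by the cycle length $n/s$, matching the appendix's residual identity rather than the $\mathcal{X}(t-n+1)$ appearing in the pseudo-code).
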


\begin{proof}
See Appendix \ref{subsec:Blocked-Case}. 
\end{proof}

When $n=2$, by Theorem~\ref{thm:conv-cyclic}, we have that Algorithm~\ref{alg:tsolve_cyclic_slices} converges to the solution to the consistent system \eqref{eq:tensorlinsys} as $t \rightarrow \infty$ as long as $\alpha$ is chosen such that $\kappa < 1$ and $\alpha < \frac{1-\kappa}{\mu}$. %
In the $n=2$ case, we can also place additional constraints on the learning rate $\alpha$ to determine the convergence rate per $n$ iterations. 

\begin{thm}
\label{thm:conv-cyc-n2} 
Define $\kappa_{1}=\|\mathcal{I}-\alpha\tilde{\mathcal{A}}_{1}^{T}*\tilde{\mathcal{A}}_{1}\|_{op}$,
$\kappa_{2}=\|\mathcal{I}-\alpha\tilde{\mathcal{A}}_{2}^{T}*\tilde{\mathcal{A}}_{2}\|_{op}$
and $\mu=\|\tilde{\mathcal{A}}_{1}^{T}*\tilde{\mathcal{A}}_{2}\|_{op}$. If $\alpha<\min\{\frac{1-\kappa_{2}}{\mu},\frac{1-\kappa_{1}\kappa_{2}}{\mu(1+\kappa_{2})}\}$
then we have $\lim_{t\rightarrow\infty}\mathcal{E}(t)=0$. 
Furthermore, assume that $\mathcal{M}(0)<\infty$ for 
$$\mathcal{M}(t) = \max\{\mathcal{E}(2t), \mathcal{E}(2t+1)\},$$ $\mathcal{M}(t)$ converges at a rate of $C = \max\{\kappa_{2}+\alpha\mu,\kappa_{1}\kappa_{2}+(1+\kappa_{1})\alpha\mu\} <1$.
\end{thm}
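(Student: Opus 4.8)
The plan is to reduce the algorithm's residual update for $n=2$ to a coupled two--step contraction on the error norms and then package it into $\mathcal{M}(t)$. Write $e(t)=\mathcal{X}(t)-\mathcal{X}^{*}$, let $i(t)\in\{1,2\}$ be the slice index used at step $t$ (which alternates, $i(t)=1$ for even $t$ and $i(t)=2$ for odd $t$), and let $j$ denote the complementary index. The crux is to establish the identity that, for $n=2$, the approximate residual produced by Algorithm~\ref{alg:tsolve_cyclic_slices} satisfies
\[
\mathcal{R}(t+1) = -\,\tilde{\mathcal{A}}_{i(t)} * e(t) - \tilde{\mathcal{A}}_{j} * e(t-1).
\]
In words, the residual approximation keeps the current iterate only on the active slice and carries the one--cycle--old iterate on the inactive slice, so no older error survives. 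I would prove this by induction on $t$, using consistency $\mathcal{B}=\tilde{\mathcal{A}}_{1}*\mathcal{X}^{*}+\tilde{\mathcal{A}}_{2}*\mathcal{X}^{*}$ together with $\mathcal{X}(t)-\mathcal{X}(t-1)=\alpha\,\tilde{\mathcal{A}}_{i(t-1)}^{T}*\mathcal{R}(t)$ to collapse the telescoping update $\mathcal{R}(t+1)=\mathcal{R}(t)-\tilde{\mathcal{A}}_{i(t)}*\bigl(\mathcal{X}(t)-\mathcal{X}(t-n+1)\bigr)$, carefully tracking the convention $\mathcal{X}(\tau)=\mathbf 0$ for $\tau\le 0$. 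I expect verifying that the stale contributions cancel exactly—so that only $e(t)$ and $e(t-1)$ remain—to be the main obstacle and the delicate bookkeeping step.

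Granting the identity, substituting into $e(t+1)=e(t)+\alpha\,\tilde{\mathcal{A}}_{i(t)}^{T}*\mathcal{R}(t+1)$ yields
\[
e(t+1)=\bigl(\mathcal{I}-\alpha\,\tilde{\mathcal{A}}_{i(t)}^{T}*\tilde{\mathcal{A}}_{i(t)}\bigr)*e(t)-\alpha\,\tilde{\mathcal{A}}_{i(t)}^{T}*\tilde{\mathcal{A}}_{j}*e(t-1).
\]
Taking Frobenius norms, applying Remark~\ref{rem:op-fro}, and using the definitions of $\kappa_{1},\kappa_{2},\mu$ from the statement (together with the transpose--invariance of $\|\cdot\|_{op}$, which makes both cross terms have norm $\mu$) gives the scalar recursion $a_{t+1}\le \kappa_{i(t)}\,a_{t}+\alpha\mu\,a_{t-1}$, where $a_{t}:=\|e(t)\|_{F}$, so the factor is $\kappa_{1}$ at even $t$ and $\kappa_{2}$ at odd $t$.

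Finally I would chain this inequality across one full cycle, bounding the two new errors by $b_{t}:=\max\{a_{2t},a_{2t+1}\}=\sqrt{\mathcal{M}(t)}$. The step at $t'=2t+1$ (slice $2$) gives $a_{2t+2}\le(\kappa_{2}+\alpha\mu)\,b_{t}$, and feeding this into the step at $t'=2t+2$ (slice $1$) gives $a_{2t+3}\le(\kappa_{1}\kappa_{2}+\alpha\mu)a_{2t+1}+\kappa_{1}\alpha\mu\,a_{2t}\le\bigl(\kappa_{1}\kappa_{2}+(1+\kappa_{1})\alpha\mu\bigr)b_{t}$. Taking the maximum of the two bounds yields $b_{t+1}\le C\,b_{t}$ with $C=\max\{\kappa_{2}+\alpha\mu,\ \kappa_{1}\kappa_{2}+(1+\kappa_{1})\alpha\mu\}$, hence $\mathcal{M}(t+1)\le C^{2}\mathcal{M}(t)$, which is exactly the claimed rate. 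Requiring each term of $C$ to be strictly below $1$ produces the learning--rate bounds ($\alpha<\tfrac{1-\kappa_{2}}{\mu}$ from the first term and $\alpha<\tfrac{1-\kappa_{1}\kappa_{2}}{\mu(1+\kappa_{1})}$ from the second), under which $C<1$; the first (global convergence) assertion is then just the statement that $C<1$, giving $\mathcal{M}(t)\to0$ geometrically and therefore $\mathcal{E}(t)=a_{t}^{2}\to0$. Besides the residual identity, the only subtlety is that the recursion is non--Markovian (depends on two previous errors), which is precisely why the $\max$--based quantity $\mathcal{M}(t)$ is the right object: it converts the two--term bound into a genuine one--step contraction.
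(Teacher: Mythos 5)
Your proposal is correct and follows essentially the same route as the paper's own proof in Appendix~B: the same two-term error recursion $a_{t+1}\le\kappa_{i(t)}a_t+\alpha\mu\,a_{t-1}$ obtained from the $n=2$ residual identity, the same coupling via the maximum of two consecutive errors, and the same contraction constant $C$. The only (cosmetic) differences are that your learning-rate bound $\alpha<\frac{1-\kappa_1\kappa_2}{\mu(1+\kappa_1)}$ uses $1+\kappa_1$ in the denominator, which is what the constant $\kappa_1\kappa_2+(1+\kappa_1)\alpha\mu$ actually requires (the theorem statement's $1+\kappa_2$ appears to be a typo that you implicitly correct), and your $\mathcal{M}(t+1)\le C^2\mathcal{M}(t)$ versus the paper's $\mathcal{M}_{k+1}\le C\mathcal{M}_k$ merely reflects whether $\mathcal{E}$ is taken as the squared or unsquared Frobenius norm, on which the paper itself is inconsistent.
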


\begin{proof}
See Appendix \ref{sec:cyc-n2}. 
\end{proof}
The proof of Theorem~\ref{thm:conv-cyc-n2} is provided in Appendix
\ref{sec:cyc-n2}. Note that the $\kappa_{1},\kappa_{2}$ contains
the learning rate $\alpha$ and therefore the sufficient condition
$\alpha<\min\{\frac{1-\kappa_{2}}{\mu},\frac{1-\kappa_{1}\kappa_{2}}{\mu(1+\kappa_{2})}\}$
requires solving this inequality. It implies that the range of $\alpha$ where an algorithm converges
is determined by the $\|\tilde{\mathcal{A}}_{i}^{T}*\tilde{\mathcal{A}}_{i}\|$
($i=1,2$) and $\mu=\|\tilde{\mathcal{A}}_{1}^{T}*\tilde{\mathcal{A}}_{2}\|_{op}$,
which are the norms of each (padded) frontal slice and correlations
between the (padded) frontal slices of the coefficient tensor in the
system \eqref{eq:tensorlinsys}.

In Theorem~\ref{thm:conv-cyc-n2}, in addition to the limiting error converging to zero,
we can also observe that the norms $\|\tilde{\mathcal{A}}_{i}^{T} * \tilde{\mathcal{A}}_{i}\|$
($i=1,2$) and correlation $\mu$ between frontal slices jointly determine
the convergence rate of the Algorithm~\ref{alg:tsolve_cyclic_slices}.
When the norm is larger, the $\kappa$ tends to be smaller, and the
convergence rate becomes smaller; when the correlation $\mu$ is smaller,
the convergence rate becomes smaller.
The same idea also applies when we compare the convergence rates
of $s=1,s>1$ cases. The convergence rates are dominated by $\kappa(s)^{t}<\kappa^{t}$,
indicating that single-slice descent indeed sacrifices the convergence
rate to only work with one frontal slice of $\mathcal{A}$ at a time.

When the system is inconsistent, there will be an additional error
term in the bound, which creates a convergence horizon. In this more
realistic scenario where the system is corrupted by noise, the choice
of $\mathcal{X}(0)$ and hence $\mathcal{E}(0)$ will affect the convergence
behavior. %

\begin{thm}
\label{thm:conv-cyclic-noise} %
Let $\kappa,\mu$ be defined as in Theorem \ref{thm:conv-cyclic} and suppose that Algorithm~\ref{alg:tsolve_cyclic_slices} with $s=1$ is applied to the inconsistent linear system $\mathcal{A} * \mathcal{X} = \mathcal{B} + \mathcal{B}_e,$
with solution $\mathcal{X}^*$ such that $\mathcal{A} * \mathcal{X}^* = \mathcal{B}$. Define $\eta_t \in \mathbb{R}$ such that $\mathcal{E}(t+1) \leq \eta_t \mathcal{E}(t) + \alpha \eta_e$ 
where $\eta_e = \max_{i=1,\cdots,n}\left\Vert \tilde{\mathcal{A}}_{i}\right\Vert _{op}\left\Vert \mathcal{B}_{e}\right\Vert$. If the learning rate $\alpha$ is selected such that $\kappa+\alpha\mu(n-1)+\frac{\alpha}{\mathcal{E}(0)}\eta_{e}  <1$ 
then as $t \rightarrow \infty$, $\mathcal{E}(t) \rightarrow \alpha \eta_e$.
\end{thm}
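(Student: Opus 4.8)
The plan is to track the error tensor $\mathcal{D}(t) \coloneqq \mathcal{X}(t) - \mathcal{X}^{*}$ and to show it obeys an \emph{affine} recursion whose inhomogeneous forcing term is governed by the noise $\mathcal{B}_{e}$. Since $\mathcal{A}*\mathcal{X}^{*} = \mathcal{B}$, the noiseless part of the dynamics coincides exactly with the consistent cyclic-FSD analysis of Theorem~\ref{thm:conv-cyclic}; the only new ingredient is an additive term injected by $\mathcal{B}_{e}$ at each step. Concretely, I would substitute the noisy residual approximation (where $\mathcal{B}$ is replaced by $\mathcal{B}+\mathcal{B}_{e}$ in~\eqref{eq:resid_approx}) into the update~\eqref{eq:SFSD}, use $\mathcal{B} - \mathcal{A}*\mathcal{X}(t) = -\mathcal{A}*\mathcal{D}(t)$, and expand $\mathcal{A} = \sum_{j}\tilde{\mathcal{A}}_{j}$ while isolating the diagonal block $j=i(t)$ to obtain
\[
\mathcal{D}(t+1) = \left(\mathcal{I} - \alpha\tilde{\mathcal{A}}_{i(t)}^{T}*\tilde{\mathcal{A}}_{i(t)}\right)*\mathcal{D}(t) - \alpha\sum_{j \neq i(t)}\tilde{\mathcal{A}}_{i(t)}^{T}*\tilde{\mathcal{A}}_{j}*\mathcal{D}(t) + \alpha\,\tilde{\mathcal{A}}_{i(t)}^{T}*\mathcal{B}_{e}.
\]

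Next I would take Frobenius norms and apply the triangle inequality together with Remark~\ref{rem:op-fro}. Using the definitions~\eqref{eq:kappa} of $\kappa$ and~\eqref{eq:mu} of $\mu$, the first two groups of terms contribute at most $(\kappa + \alpha\mu(n-1))\,\mathcal{E}(t)$, exactly as in the consistent case, while the noise term contributes at most $\alpha\max_{i}\|\tilde{\mathcal{A}}_{i}\|_{op}\|\mathcal{B}_{e}\| = \alpha\eta_{e}$. This establishes the bound $\eta_{t} \leq \kappa + \alpha\mu(n-1)$ in the hypothesised inequality $\mathcal{E}(t+1) \leq \eta_{t}\mathcal{E}(t) + \alpha\eta_{e}$ and reduces the problem to the scalar affine recursion $\mathcal{E}(t+1) \leq \eta\,\mathcal{E}(t) + \alpha\eta_{e}$ with $\eta \coloneqq \kappa + \alpha\mu(n-1)$.

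Finally I would unroll this recursion. The stated hypothesis $\kappa + \alpha\mu(n-1) + \tfrac{\alpha}{\mathcal{E}(0)}\eta_{e} < 1$ is equivalent to $\eta < 1$ together with $\tfrac{\alpha\eta_{e}}{1-\eta} < \mathcal{E}(0)$: the former guarantees contraction of the homogeneous part, and the latter places the fixed point $\mathcal{E}_{\infty} = \tfrac{\alpha\eta_{e}}{1-\eta}$ of the map $x \mapsto \eta x + \alpha\eta_{e}$ strictly below $\mathcal{E}(0)$. Iterating yields $\mathcal{E}(t) \leq \eta^{t}\mathcal{E}(0) + \alpha\eta_{e}\tfrac{1-\eta^{t}}{1-\eta}$, so $\mathcal{E}(t)$ decreases monotonically toward the convergence horizon $\mathcal{E}_{\infty}$, which is of order $\alpha\eta_{e}$ and which I would report as the limiting error.

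The main obstacle I anticipate is twofold. First, the algorithm uses the delayed cyclic residual~\eqref{eq:resid_approx} rather than the exact residual, so the clean update for $\mathcal{D}(t+1)$ above actually carries extra history terms in $\mathcal{X}(t-n+2),\dots,\mathcal{X}(t)$; rigorously controlling these, and showing they fold into the same $\kappa$/$\mu$ structure, is precisely the technical heart inherited from the proof of Theorem~\ref{thm:conv-cyclic}, and the noise term must be carried through that bookkeeping intact. Second, the recursion produces a horizon $\tfrac{\alpha\eta_{e}}{1-\eta}$ rather than exactly $\alpha\eta_{e}$; matching the constant in the statement requires either interpreting the horizon as order $\alpha\eta_{e}$ (tight when $\eta$ is small) or sharpening the noise bound, and I would make this reconciliation explicit.
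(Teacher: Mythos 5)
Your high-level decomposition is the same as the paper's: substitute the noisy residual into the update, isolate the diagonal block $\mathcal{I}-\alpha\tilde{\mathcal{A}}_{i(t)}^{T}*\tilde{\mathcal{A}}_{i(t)}$, bound the cross terms by $\mu$ and the noise injection by $\eta_{e}=\max_{i}\|\tilde{\mathcal{A}}_{i}\|_{op}\|\mathcal{B}_{e}\|$. However, there is a genuine gap at the step where you reduce everything to the one-step affine recursion $\mathcal{E}(t+1)\leq\eta\,\mathcal{E}(t)+\alpha\eta_{e}$ with $\eta=\kappa+\alpha\mu(n-1)$. The delayed residual \eqref{eq:resid_approx} makes $\mathcal{R}(t+1)=\sum_{i=0}^{n-1}\tilde{\mathcal{A}}_{[(t-i)\Mod n]+1}*(\mathcal{X}_{*}-\mathcal{X}(t-i))-\mathcal{B}_{e}$, so the honest bound is the order-$n$ delay recursion
\begin{equation*}
\mathcal{E}(t+1)\;\leq\;\kappa\,\mathcal{E}(t)+\alpha\mu\sum_{i=1}^{n-1}\mathcal{E}(t-i)+\alpha\eta_{e}.
\end{equation*}
Collapsing $\sum_{i=1}^{n-1}\mathcal{E}(t-i)$ into $(n-1)\mathcal{E}(t)$ requires $\mathcal{E}(t-i)\leq\mathcal{E}(t)$ for $i\geq1$, i.e.\ that the error is non-decreasing in time, which is the opposite of what you are trying to prove; you cannot invoke monotone decrease either, since that is the conclusion. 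You flag this as "the technical heart inherited from Theorem~\ref{thm:conv-cyclic}" but defer it rather than resolve it, and the deferral is exactly where the work lies. The paper resolves it by never passing to a one-step recursion: it bounds every $\mathcal{E}(t)$ relative to $\mathcal{E}(0)$ via a recursively defined coefficient $\eta_{t}=\epsilon_{t}+\frac{\alpha}{\mathcal{E}(0)}\eta_{e}$ (with $\epsilon_{t}$ from \eqref{eq:eps}), proves $\mathcal{E}(t)\leq\eta_{t}\mathcal{E}(0)$ by induction, and then shows $\eta_{t+1}<\eta_{t}$ under the stated hypothesis, mirroring Lemmas~\ref{lem:contract} and \ref{lem:decrease}.

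This bookkeeping difference also explains your second difficulty. The fixed point of your one-step map is $\frac{\alpha\eta_{e}}{1-\eta}$, which does not match the claimed limit $\alpha\eta_{e}$, and no amount of reinterpretation of constants fixes that within your framework. In the paper's argument the noise enters as the constant offset $\frac{\alpha}{\mathcal{E}(0)}\eta_{e}$ inside each coefficient $\eta_{t}$ rather than being accumulated geometrically across iterations, so the limiting bound is $\left(\lim_{t}\eta_{t}\right)\mathcal{E}(0)$, which is how the horizon $\alpha\eta_{e}$ in the theorem statement arises. To repair your proof you would need to carry the full delay recursion through the $\epsilon_{t}$-style induction with the noise term attached, not reduce to a scalar affine map.
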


\begin{proof}
See Appendix \ref{subsec:conv-cyclic-noise}. 
\end{proof}

When frontal slices are selected at random, without replacement, the same proof argument follows by relabeling the frontal slices after each $n$ iterations. However, the proof of the randomized case with replacement is nontrivial without additional or restrictive assumptions. We leave the proofs of this case as future work.

\section{\label{sec:Experiments}Experiments}

In this section, we perform a collection of numerical experiments
on synthetic and real-world data sets to demonstrate and compare the
application of the proposed methods. %
In the our experiments, unless otherwise noted, we fix $n_{1}$,
$n_{2}$, $n_{3}$, and $n$. The first set of experiments pertain
only to the performance of FSD. The first is a study of the
algorithm with varying step sizes, the second investigates the algorithm's
performance when key parameters in our theoretical guarantees ($\kappa$
and $\mu$) vary, and the last investigates the performance deterioration
as $n$ grows. The second two experiments compare the variations of
FSD and then FSD with other sketching approaches. The last two experiments
evaluate the performance of FSD on image and video deblurring, where
only a small number of frontal slices of $\mathcal{A}$ are non-zero
due to the nature of the deblurring operator, thus making it a prime
case where FSD can be efficiently applicable. 

\subsection{Synthetic Experiments}

For all synthetic experiments, unless otherwise noted, the entries
of $\mathcal{A}$ and $\mathcal{X}$ are drawn i.i.d. from a standard
Gaussian distribution, $\mathcal{X}$ is normalized to have unit Frobenius
norm, and we set $\mathcal{B}=\mathcal{A}*\mathcal{X}$ to generate
consistent systems.

Our first synthetic experiment demonstrates the performance of Algorithm~\ref{alg:tsolve_cyclic_slices}
on a Gaussian system with varying learning rates and dimensions. The
three different dimension configurations provide insight into how
the tensor's structure affects the methods' performance. In general,
we observe that when the frontal size ($n$) is moderate ($n=10$), the
performance varies slightly among methods but follows a general trend
of decreasing error with more iterations. When the frontal size ($n$)
is increased ($n=20$), the performance differences become more pronounced.
When the column size ($n_{2}$) is larger ($n_{2}=20$),
leverage score sampling 
tends to perform better,
suggesting that this method handles higher column dimensions effectively.

For $\alpha$=0.5 (except for TRK), all FSD methods
show a gradual decrease in error, but the convergence is slower compared
to higher learning rates. 
  For
$\alpha=1$, the convergence speeds up, and the TRK method still 
consistently outperforms others, achieving the lowest approximation
error. For $\alpha=2$, the convergence becomes fast, but there are instances of oscillation, which
suggests instability at higher learning rates. The cyclic FSD demonstrates the best performance, maintaining stability and
achieving low errors. When we keep increasing the learning rate of FSD methods to $\alpha=4$, then cyclic FSD along with its variants outperforms TRK in most cases. %

Overall, the results indicate that the cyclic and leverage
score sampling methods perform well, particularly when the column
size is large. The TRK method shows competitive performance at lower
learning rates but becomes unstable at higher learning rate $\alpha$.
Random sampling and cyclic methods perform adequately but generally
lag behind leverage score sampling in terms of convergence
speed and final error. These findings suggest that choosing the appropriate
method and tuning the learning rate based on the tensor's dimensions
can significantly improve performance in solving large-scale tensor
systems.
\newpage
\begin{figure}[H]
\centering %

\begin{tabular}{ccc}
\toprule 
&$(n_{1},n_{2},n_{3},n)$&
\tabularnewline
$(100,20,10,10)$  & $(100,10,20,10)$  & $(100,10,10,20)$\tabularnewline
\midrule
\midrule 
 & $\alpha=0.5$  & \tabularnewline
\midrule 
\includegraphics[width=0.3\textwidth]{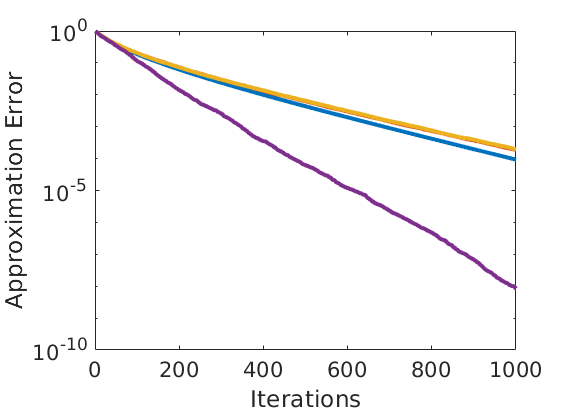}  & \includegraphics[width=0.3\textwidth]{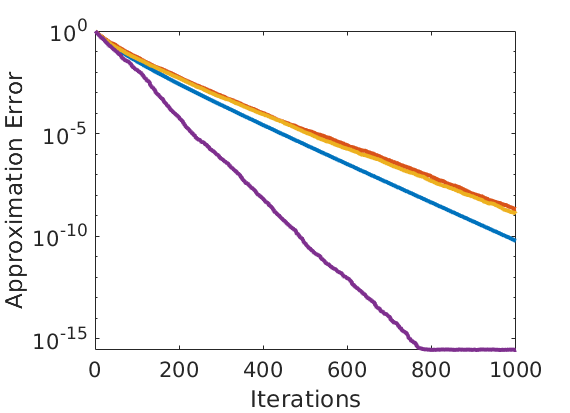}  & \includegraphics[width=0.3\textwidth]{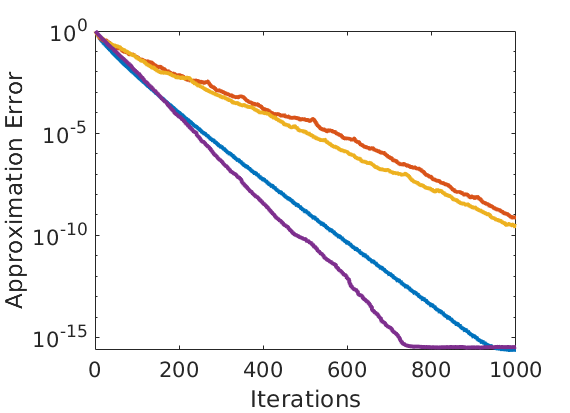}\tabularnewline
\midrule 
 & $\alpha=1$  & \tabularnewline
\midrule 
\includegraphics[width=0.3\textwidth]{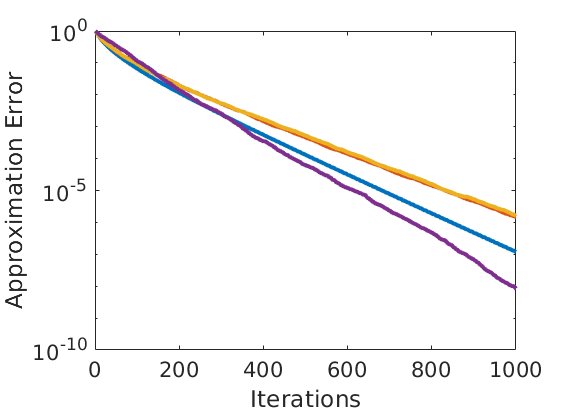}  & \includegraphics[width=0.3\textwidth]{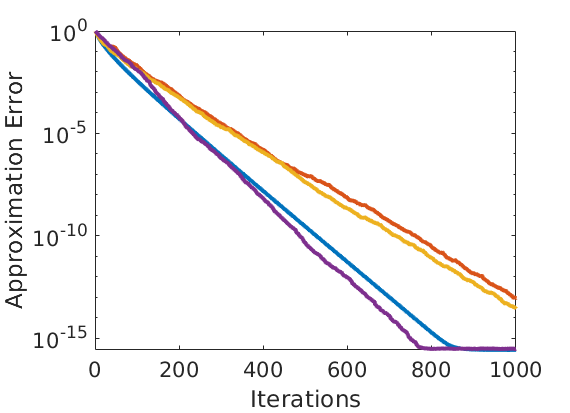}  & \includegraphics[width=0.3\textwidth]{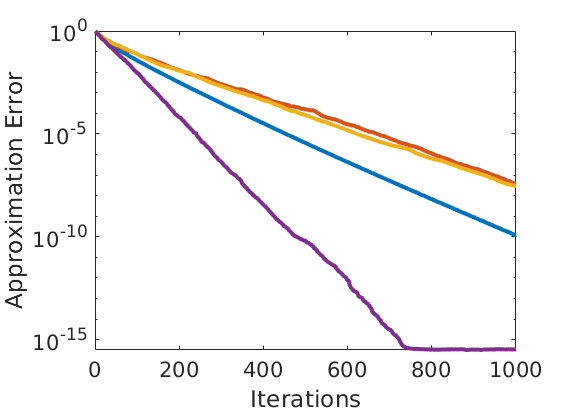}\tabularnewline
\midrule 
 & $\alpha=2$  & \tabularnewline
\midrule 
\includegraphics[width=0.3\textwidth]{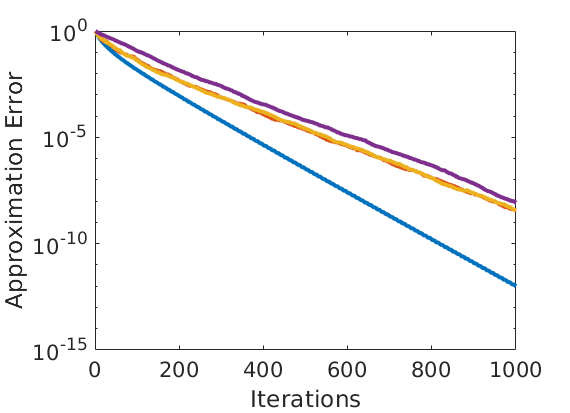}  & \includegraphics[width=0.3\textwidth]{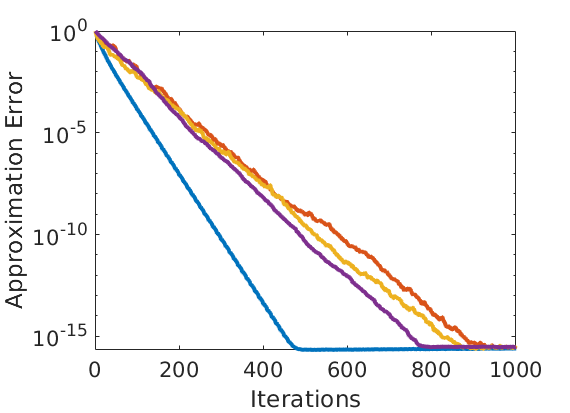}  & \includegraphics[width=0.3\textwidth]{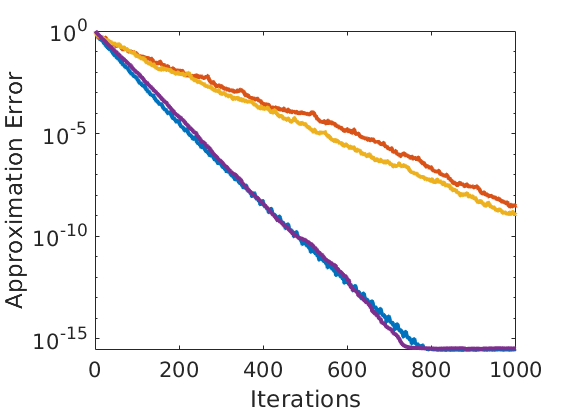}\tabularnewline
\midrule 
 & $\alpha=4$  & \tabularnewline
\midrule 
\includegraphics[width=0.3\textwidth]{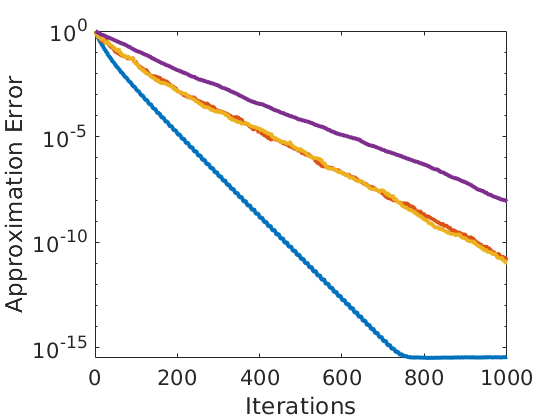}  & \includegraphics[width=0.3\textwidth]{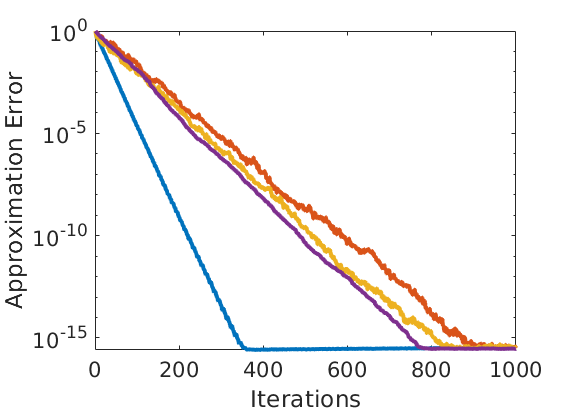}  & \includegraphics[width=0.3\textwidth]{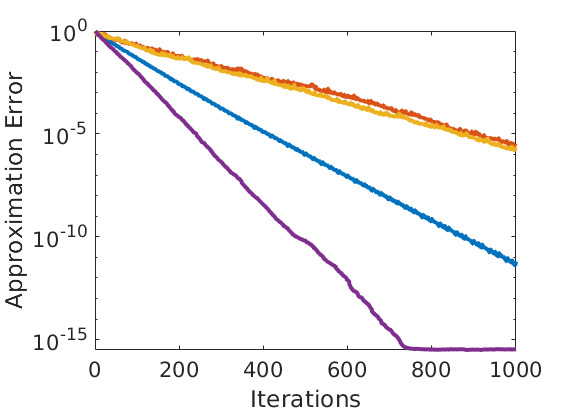}\tabularnewline
\bottomrule
\end{tabular}\\
 \includegraphics[width=0.5\textwidth]{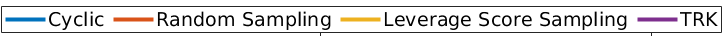}\caption{\label{fig:Generate--and}\label{fig:exp_variations} Generate $\mathcal{A},\mathcal{X}$
and compute consistent $\mathcal{B}=\mathcal{A}*\mathcal{X}$ for
the synthetic system for varying dimension sizes. In each panel, we
show the final approximation error $\mathcal{E}(T)$ using different
methods (Frontal: Algorithm \ref{alg:tsolve_cyclic_slices}; Sample
(Random): Algorithm \ref{alg:tsolve_random_slices} with uniform random
sampling of slices; Sample (Leverage): Algorithm \ref{alg:tsolve_random_slices}
with leverage score sampling of slices; 
TRK: \cite{ma2022randomized}) and learning rate $\alpha=0.001$ and
the maximal number of iterations of 1000 steps. Comparison of performance
of different iterative methods for solving tensor systems when $n_{1}=100$,
$n_{2}=20$, $n_{3}=10$, $n=10$, and $\alpha=\{0.5,1,2,4$\} for frontal descent methods %
}%
\label{fig:comparisonAll}
\end{figure}

Figure~\ref{fig:varyn} shows the performance of Algorithm~\ref{alg:tsolve_cyclic_slices}
when varying the number of frontal slices. In this experiment, we
plot the final approximation error after 1000 iterations and averaged
over 20 runs for each values of $n$ and present the average approximation
error $\|\mathcal{X}(1000)-\mathcal{X}_{*}\|_{F}$. We expect that
as the number of frontal slices increases, the per-iteration approximation
quality of the residual deteriorates. Thus, it is expected that the
final approximation error after 1000 iterations and $\alpha=0.001$
increases as $n$ increases, as shown in Figure~\ref{fig:varyn}.
\begin{figure}[ht!]
\centering \includegraphics[width=0.45\textwidth]{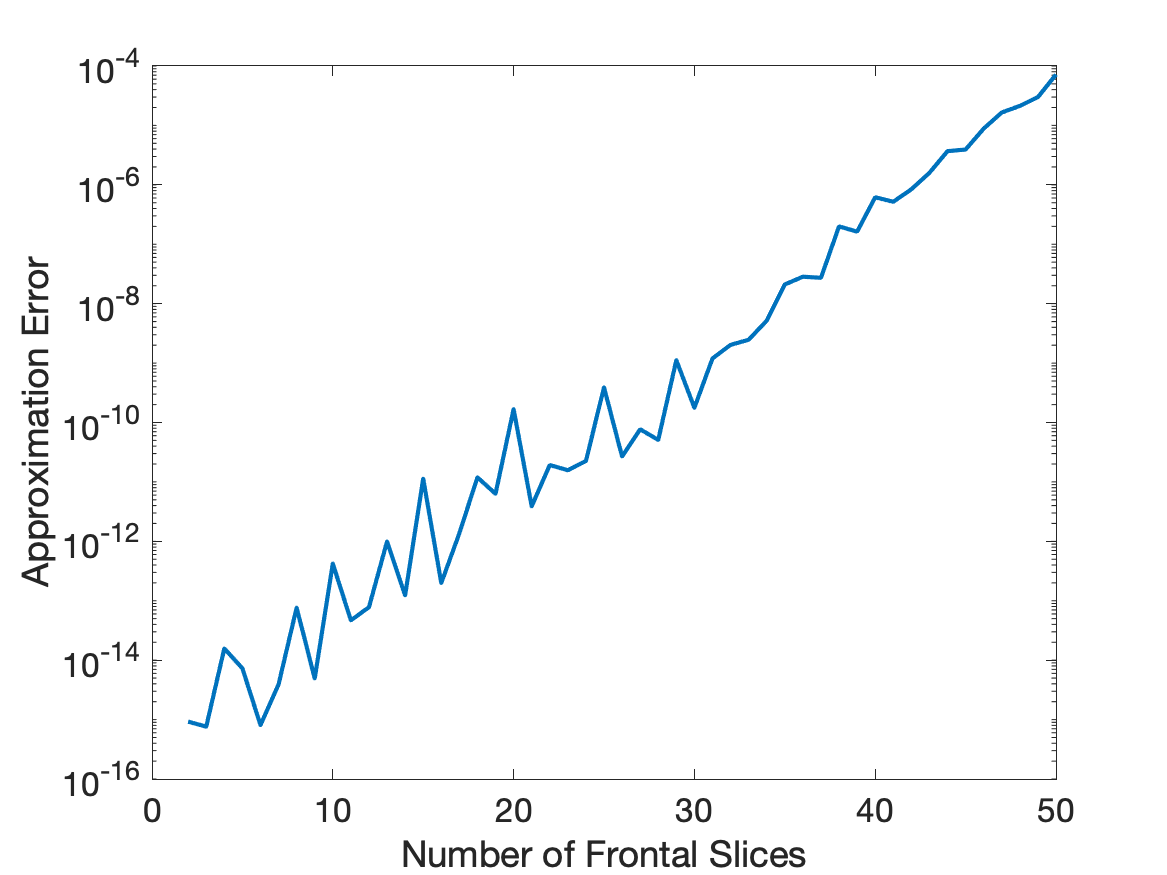}
\includegraphics[width=0.45\textwidth]{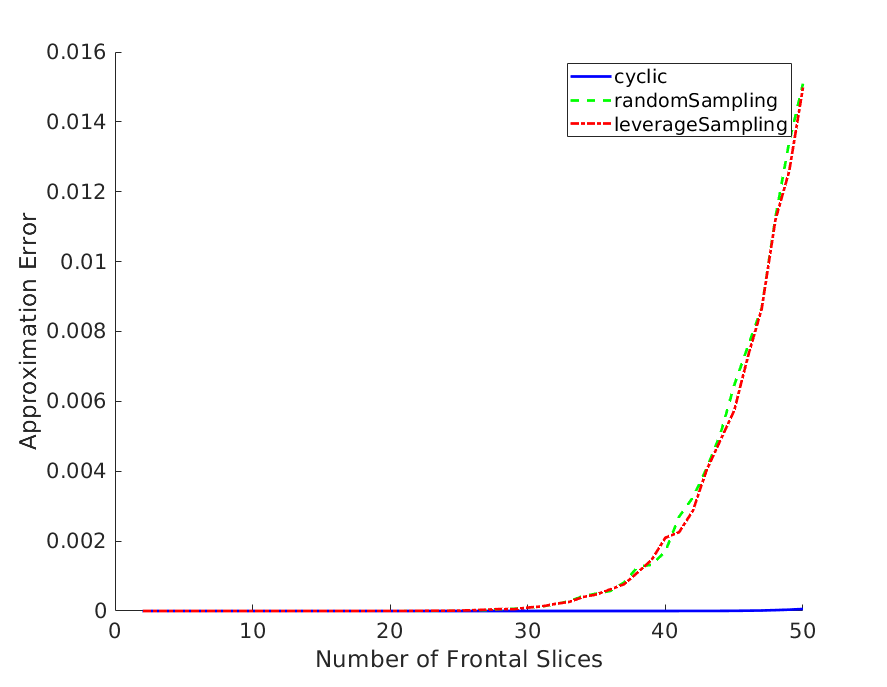}
\caption{Performance of Algorithm~\ref{alg:tsolve_cyclic_slices} (left),
\ref{alg:tsolve_random_slices} (right: joint comparison between cyclic,
uniform and leverage score sampling) on an i.i.d. Gaussian system
with $n_{1}=100$, $n_{2}=20$, $n_{3}=10$, $\alpha=0.001$, and
varying $n$ averaged over 20 different seeds. As the 
 frontal size increases, the approximation errors at the end of (the same maximal number of) iterations also increase.}
\label{fig:varyn} 
\end{figure}

Our next experiment considers different types of measurement tensors
$\mathcal{A}$, which exhibit different $\kappa$ and $\mu$. In this
experiment, we fix $n_{1}=100$, $n_{2}=20$, $n_{3}=10$, $n=10$,
and $\alpha=0.0001$ while applying Algorithm~\ref{alg:tsolve_cyclic_slices}
to tensors with (1) i.i.d Gaussian entries (2) i.i.d. Unif{[}0,1{]}
entries and (3) an additive mixture between Gaussian and uniform entries.
The accompanying table in Figure~\ref{fig:varysys} shows the values
of $\kappa$ and $\mu$ for the randomly generated systems while Figure~\ref{fig:varysys}
presents the performance of Algorithm~\ref{alg:tsolve_cyclic_slices}
for the different types of systems. Note that for Gaussian systems,
where $\mu$ is small, the proposed method works quite well, whereas
it converges slower for systems in which $\mu$ is larger, such as
the Uniform system. In particular, we can compute the numerical approximation
(via an optimizer for obtaining the maximum over the sphere $\|\mathcal{X}\|_{op}$)
of the quantities \eqref{eq:kappa} and \eqref{eq:mu}. The scatter
plot in Figure \ref{fig:Performance-of-Algorithm-diffsystem} visualizes
systems with different $\kappa$ and $\mu$ values, generated using
Gaussian, Mixture, and Uniform distributions. The x-axis represents
$\kappa$ and the y-axis represents $\mu$, with dot colors indicating
the final approximation error. Gaussian systems, represented by circles,
cluster towards lower $\kappa$ and $\mu$ values, displaying relatively
low approximation errors (darker blue colors). Mixture systems, represented
by squares, have slightly higher $\kappa$ and $\mu$ values than
Gaussian systems but still maintain low errors. Uniform systems, shown
as triangles, spread out with significantly higher $\kappa$ and $\mu$
values, exhibiting a wider range of approximation errors, with some
reaching higher values (yellow shades). This plot effectively demonstrates
that Gaussian and Mixture systems yield lower $\kappa$ and $\mu$
values and lower approximation errors, while Uniform systems display
higher values and more variability in their errors, highlighting the
impact of system type on tensor approximation performance. Note that
the learning rate $\alpha$ does not fall in the sufficient convergence
horizon in Theorem \ref{thm:conv-cyclic}, yet we still observe convergence,
which indicates that a sharper horizon is possible. 
\begin{figure}[h!]
\centering

\includegraphics[width=0.45\textwidth]{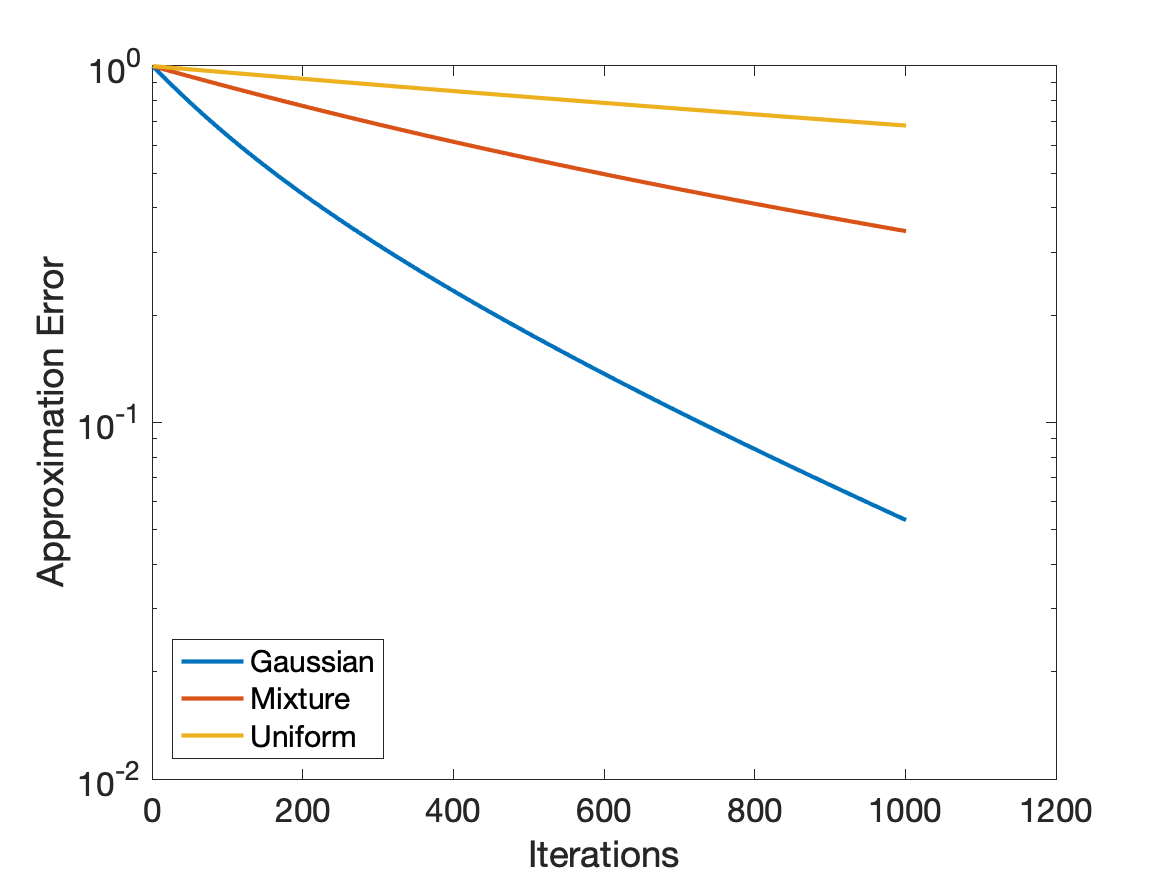}\includegraphics[width=0.45\textwidth]{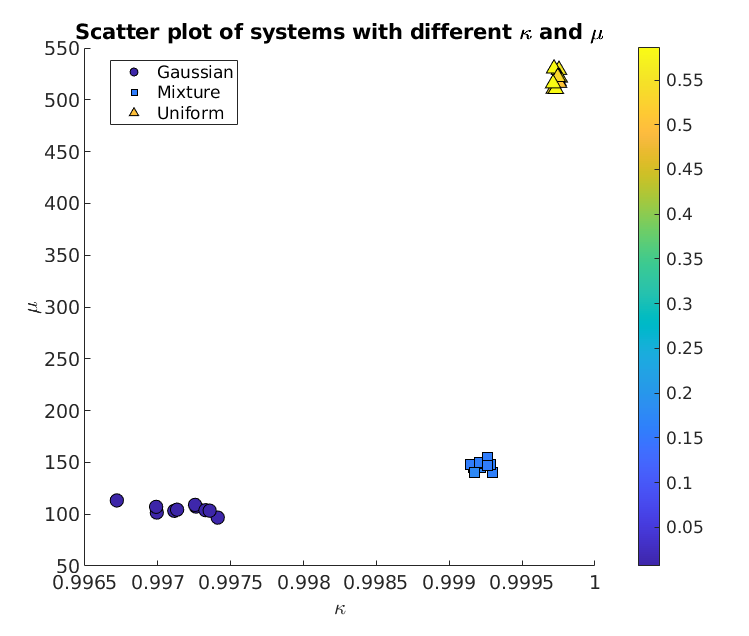}

\caption{\label{fig:Performance-of-Algorithm-diffsystem}Performance of Algorithm~\ref{alg:tsolve_cyclic_slices}
(block size=1) with different types of tensors $\mathcal{A}$ when
$n_{1}=100$, $n_{2}=20$, $n_{3}=10$, $n=10$, and $\alpha=5\times10^{-5}$.
Left: We provide the numerical approximation to the operator norms
$\kappa$ and $\mu$ for each system as defined in \ref{eq:kappa}
and \ref{eq:mu}. Right: The scatter plot visualizes the systems with
different $\kappa$ and $\mu$ values, generated using Gaussian, Mixture,
and Uniform distributions. Each dot's color indicates the final approximation
error, with the color bar on the right showing the error scale. }
\label{fig:varysys} 
\end{figure}

Figure~\ref{fig:exp_varyblock} presents the performance of Algorithm~\ref{alg:tsolve_cyclic_slices}
with varying block sizes $s$. The measurement tensor $\mathcal{A}$
is i.i.d. Gaussian with $n=10$ frontal slices. Thus, when $s=10$,
Algorithm~\ref{alg:tsolve_cyclic_slices} with blocking is equivalent
to Algorithm~\ref{alg:tsolve_full_gradient}. Intuitively, we
have more information when we have larger block sizes, larger block sizes allow the algorithm to converge faster since the residual approximations
improve as $s$ increases. In terms of convergence horizon, we need
fewer assumptions to check for $\kappa,\mu$ and hence enjoy a wider
horizon of convergence (See details in Appendix \ref{subsec:Blocked-Case}).
\begin{figure}[h]
\centering \includegraphics[width=0.3\textwidth]{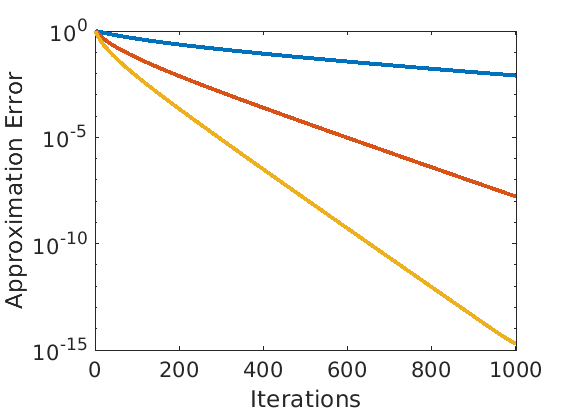}
\includegraphics[width=0.3\textwidth]{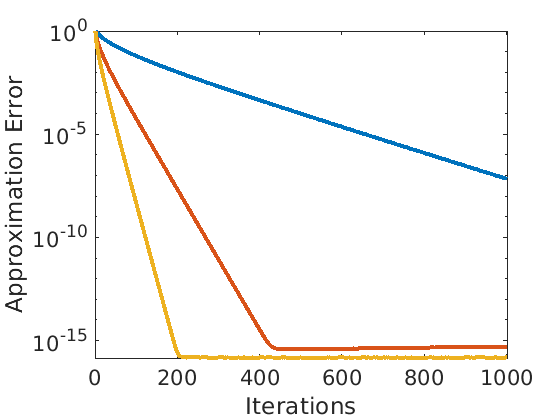}
\includegraphics[width=0.3\textwidth]{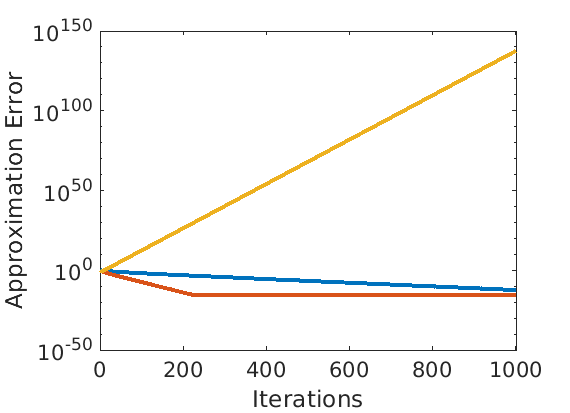}
\\
\includegraphics[width=0.2\textwidth]{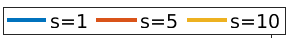}\caption{Performance of Algorithm~\ref{alg:tsolve_cyclic_slices} with different
frontal block sizes $s$ when $n_{1}=100$, $n_{2}=20$, $n_{3}=10$,
$n=10$, and $\alpha=0.0001,0.0005,0.001$.}
\label{fig:exp_varyblock} 
\end{figure}

Our last synthetic experiment compares the variations of the proposed
method with the TRK algorithm~\cite{ma2022randomized}. Since the
TRK algorithm assumes row slices are normalized, we incorporate the
row normalization for each row slice and increase the learning rate
$\alpha=3$ for all variations of frontal slice descent. Figure~\ref{fig:exp_variations}
presents the performance of Algorithm~\ref{alg:tsolve_cyclic_slices}
(``Cyclic"), Algorithm~\ref{alg:tsolve_random_slices} with leverage
score sampling, Algorithm~\ref{alg:tsolve_random_slices} with simple
random sampling, %
and TRK on a Gaussian system when $n_{1}=100$,
$n_{2}=20$, $n_{3}=10$, and $n=10$. In this case, we observe that
Algorithm~\ref{alg:tsolve_cyclic_slices} and Algorithm~\ref{alg:tsolve_random_slices}
perform similarly
as shown in Figure \ref{fig:Generate--and},
and most importantly, that all three methods are converging to the
solution. 
It should be noted that the choice of learning rate impacts
which method outperforms the other (choosing a smaller learning rate
may lead to Algorithm~\ref{alg:tsolve_cyclic_slices} to converge
slower than TRK, see, for example, Figure~\ref{fig:experiments_HST}).

\begin{table}
\centering

\begin{tabular}{ccccc}
\toprule 
$n$  & 1  & 10  & 100  & 1000 \tabularnewline
\midrule
\midrule 
Frontal: Algorithm \ref{alg:tsolve_cyclic_slices}  & 0.016734  & 0.030362  & 0.089839  & 0.743340 \tabularnewline
\midrule 
Sample (Random): Algorithm \ref{alg:tsolve_random_slices}  & 0.015908  & 0.031074  & 0.096795  & 0.739575 \tabularnewline
\midrule 
Sample (Leverage): Algorithm \ref{alg:tsolve_random_slices}  & 0.015652  & 0.030578  & 0.092100  & 0.730805 \tabularnewline
\midrule 
TRK: \cite{ma2022randomized}  & 0.003110  & 0.003010  & 0.015947  & 0.013112 \tabularnewline
\bottomrule
\end{tabular}

\caption{\label{tab:The-system-time}The system wall-clock time (seconds via
tic-toc) of each algorithms for 1000 iterations on a system with $n_{1}=n_{2}=n_{3}=100,$
but with increasing $n$. This time is averaged over 5 repeated experiments. }
\end{table}

\begin{rem}
The TRK algorithm uses exact residual information for its updates,
which can lead to more precise and potentially faster convergence
as shown in Table \ref{tab:The-system-time}. In contrast, the method
discussed in this paper does not utilize exact residual information,
which may lead to slower convergence or less accurate updates. The
frontal sketching methods discussed in Algorithms \ref{alg:tsolve_cyclic_slices} and
\ref{alg:tsolve_random_slices}  %
may perform worse than TRK because they doesn't use exact residual
information (See Figure \ref{fig:comparisonAll}). 
\end{rem}

\begin{rem}
The varying $n_{3}$ should not affect the convergence result that
much, since our main theorems (in Section \ref{sec:Algorithmic-Approaches})
bounds the convergence rate using information of the tensor $\mathcal{A}$
only, which only involves $n_{1},n_{2},n$. The performance of matrix
sketching methods depends on the aspect ratio, specifically the number
of rows ($n_{1}$) to columns ($n_{2}$). For tensor sketching, similar
principles apply, involving the row/frontal ($n_{1}/n$) and column/frontal
($n_{2}/n$) ratios. These ratios affect both computational efficiency
and convergence rates. In tensor algorithms like frontal slice descent,
higher row/frontal or column/frontal ratios can lead to better-conditioned
sub-problems and faster convergence, but at a higher computational
cost per iteration. Optimizing these aspect ratios is crucial for
improving the performance of tensor-based sketching methods in practical
applications. For example, if $n_{1}=100$ (rows) and $n_{2}=20$
(columns), and given the effective frontal size of $n=10$, the effective
row/frontal ratio is $100/10=10$, and the effective column/frontal
ratio is $20/10=2$. Since the effective column/frontal ratio is smaller,
column sketching might be more appropriate despite the large nominal
frontal size. This approach considers both actual and effective sizes,
optimizing the sketching method based on correlation.

\end{rem}

\subsection{Real World Application: Deblurring}

In many imaging problems, particularly those involving inverse operations
such as image deblurring or reconstruction, the problem is often ill-posed.
This typically happens because the forward model (the process that
generates the observed data from the true image) is not invertible
or is sensitive to noise. For example, if multiple images can produce
similar observed data, then the problem lacks a unique solution. As
another example, if small changes or noise in the observed data lead
to large changes in the reconstructed image, then the problem lacks
stability. An example of such an imaging problem is \emph{deblurring},
where the objective is to recover the original version of an image
that has been blurred. Traditional methods often struggle with recovering
high-quality images from blurred observations, especially when dealing
with tensor data structures.

The t-product's convolutional nature is particularly useful in imaging
problems because it aligns with the structure of typical imaging operations.
Convolution operations in imaging can be represented as circulant
matrices, which naturally align with the t-product formulation. This
alignment is particularly beneficial for ill-posed imaging problems,
where the convolution represents the blurring or transformation applied
to an image. A blurring operator, designed using Gaussian kernels
(of fixed band-width and noise variance on the normalized scale),
is applied to each slice of the resized tensor to simulate a blurred
video. The algorithms aim to solve the tensor equation $\mathcal{A}*\mathcal{X}=\mathcal{B}$,
where $\mathcal{B}$ is the observed blurred tensor, $\mathcal{A}$
is the blurring operator (which can be represented as a tensor operation
via t-product), and $\mathcal{X}$ is the original noiseless tensor
in (1) or (2). Error metrics are calculated for each slice by comparing
the recovered slices with the original slices and then averaged over
all slices. The error metrics for each algorithm are averaged over
all slices to provide a comprehensive assessment. Lower MSE values
and higher PSNR and SSIM values indicate better recovery quality.
The metrics are defined as follows: 
\begin{itemize}
\item \textbf{MSE} measures the average squared difference between the original
and recovered pixel values. Lower values indicate closer resemblance
to the original image. 
\item \textbf{PSNR} is a logarithmic measure of the ratio between the maximum
possible pixel value and the power of the noise (error). Higher values
indicate less noise in the recovered image. 
\item \textbf{SSIM} evaluates the structural similarity between the original
and recovered images, taking into account luminance, contrast, and
structure. Values range from -1 to 1, with higher values indicating
better structural similarity. 
\end{itemize}
Given the t-product structure, \cite{reichel2022tensor} develops
several algorithms for solving such ill-posed t-product linear systems,
including Tikhonov Regularization with Arnoldi (tAT), Generalized
Minimal Residual (tGMRES) Methods and their global variants. However,
these are regularized methods; we will show how the performance of
exact sketching solvers behave in this experiment. In this section,
we compare the use of frontal slices and row slices to image deblurring
applications. The generation of the blurring operator $\mathcal{A}$
closely follows that outlined in ~\cite{reichel2022tensor}, and
we present the details here for completion. Given parameters $\sigma$,$N$,and
$B$, the tensor $\mathcal{A}\in\mathbb{R}^{N\times N\times N}$ is
generated with frontal slices 
\[
\mathcal{A}_{i}=A_{1}(i,1)A_{2}\quad\quad i=1,...,N,
\]
where matrices $A_{1}$ and $A_{2}$ are generated by the Matlab commands:
\begin{align*}
\mathtt{z}_{1} & =\mathtt{[exp(-([0:band-1].^{2})/(2\sigma^{2})),zeros(1,N-B)]},\\
{\tt z_{2}} & ={\tt [z_{1}(1)~fliplr(z_{1}(end-length(z_{1})+2:end))]},\\
A_{1} & =\frac{1}{\sigma\sqrt{2\pi}}{\tt toeplitz(z_{1},z_{2})},\\
A_{2} & =\frac{1}{\sigma\sqrt{2\pi}}\mathtt{toeplitz(z_{1})}
\end{align*}
and $B$ and $\sigma$ determine the band size and variation in the
Gaussian blur, respectively. It should be noted that the frontal slices
are extremely ill-conditioned: the first $B+1$ frontal slices have
conditioning $\approx10^{6}$, and the remaining frontal slices are
all zeros. Since the proposed methods are frontal slice-based, we
only need to cycle through the first $B+1$ frontal slices of $\mathcal{A}$.

Our experiments demonstrate that tensor recovery algorithms can effectively
deblur and recover images from blurred observations. The metrics used
provide a quantitative assessment of the recovery quality, with lower
MSE and higher PSNR and SSIM values indicate better performance. These
findings are valuable for applications in image processing and computer
vision, where high-quality image recovery is essential.

\begin{figure}[h]
\centering %

\textbf{100 by 100 resized HST Satellite Image from \cite{reichel2022tensor}}\\

\includegraphics[width=0.6\textwidth,height=3.5cm]{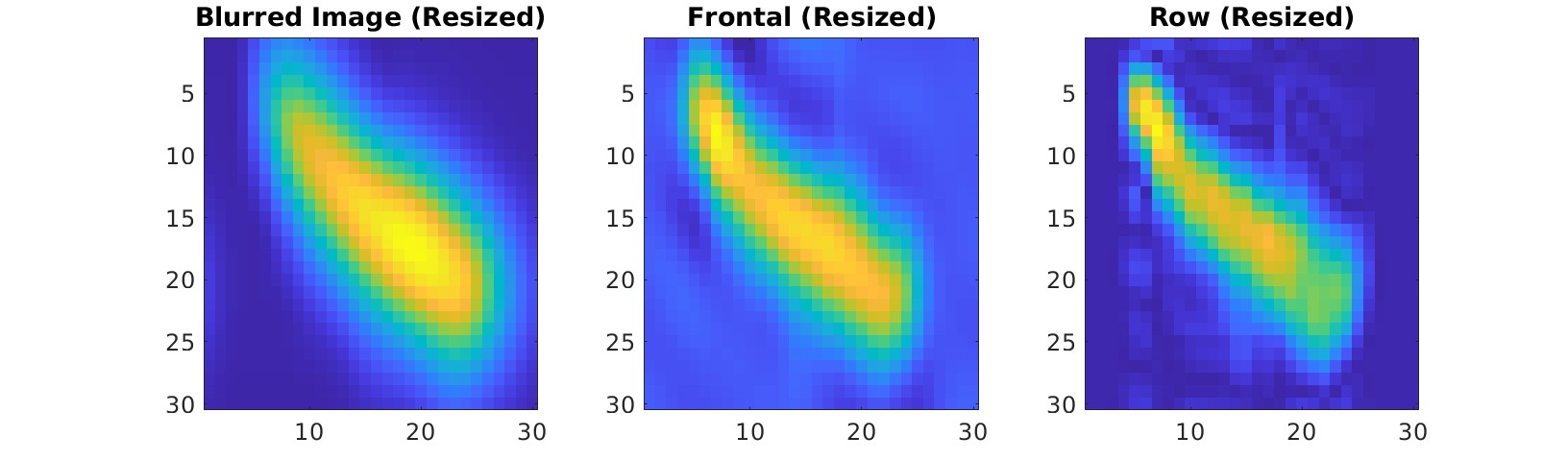}\includegraphics[width=0.35\textwidth]{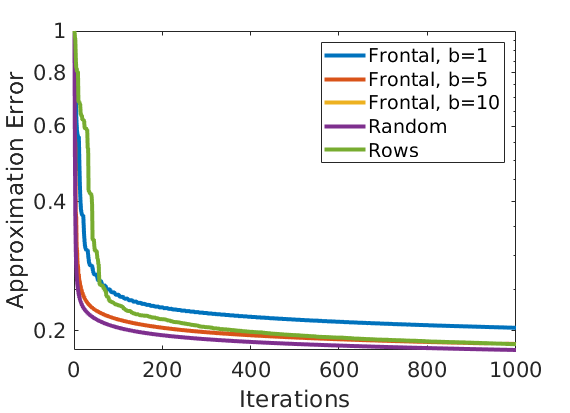}

\begin{tabular}{ccccc}
\toprule 
Method  & MSE  & PSNR  & SSIM  & Time (seconds)\tabularnewline
\midrule 
Cyclic (block size = 1)  & 4.127e-06  & 5.384e+01  & 9.953e-01  & 12.898\tabularnewline
\midrule 
Cyclic (block size = 5)  & 3.468e-06  & 5.460e+01  & 9.962e-01  & 12.359\tabularnewline
\midrule 
Cyclic (block size = 10)  & \textbf{3.250e-06 }  & \textbf{5.488e+01 }  & \textbf{9.965e-01 }  & 12.416\tabularnewline
\midrule 
Random  & \textbf{3.250e-06 }  & \textbf{5.488e+01 }  & \textbf{9.965e-01 }  & 12.521\tabularnewline
\midrule 
TRK  & 3.406e-06  & 5.468e+01  & 9.961e-01  & 1.206\tabularnewline
\bottomrule
\end{tabular}

\caption{\label{fig:experiments_HST} HST Satellite Image from \cite{reichel2022tensor}.
A frontal slice (i.e., frame) from the resized ground truth video
$\mathcal{X}[:,:,15]\in\mathbb{R}^{60\times60\times1}$, the blurred
video $\mathcal{B}[:,:,15]\in\mathbb{R}^{60\times60\times1}$, the
recovered slice when using cyclic frontal slice descent (Algorithm~\ref{alg:tsolve_cyclic_slices}),
random frontal slice descent (Algorithm~\ref{alg:tsolve_random_slices}),
and when using TRK \cite{ma2022randomized}. In the leftmost panel,
we show the final approximation error $\mathcal{E}(T)$ against iterations
with learning rate $\alpha=0.01$ and the maximal number of iterations
of 1000 steps.}
\end{figure}

Figure \ref{fig:experiments_HST} compares the performance of Algorithm~\ref{alg:tsolve_cyclic_slices}
and TRK on image deblurring. The image is a resized version of the
$512\times512$ pixel image of the Hubble Space Telescope in Figure
\ref{fig:hst}. The measurements $\mathcal{B}=\mathcal{A}*\mathcal{X}$
where $\mathcal{A}$ is the $151\times 151\times151$ blurring operator
with $B=9$ and $\sigma=3$. The learning rate is selected to be $\alpha=0.01$.
We put the initial image with double precision where the image is
extended into a 3-way tensor by creating several slices (frames) with
slight variations introduced by Gaussian additive noise. 
\newpage
\begin{figure}[H]
\centering

\textbf{151 by 31 by 151 full-size trimmed video}

\centering
\includegraphics[width=0.5\textwidth,height=3.5cm]{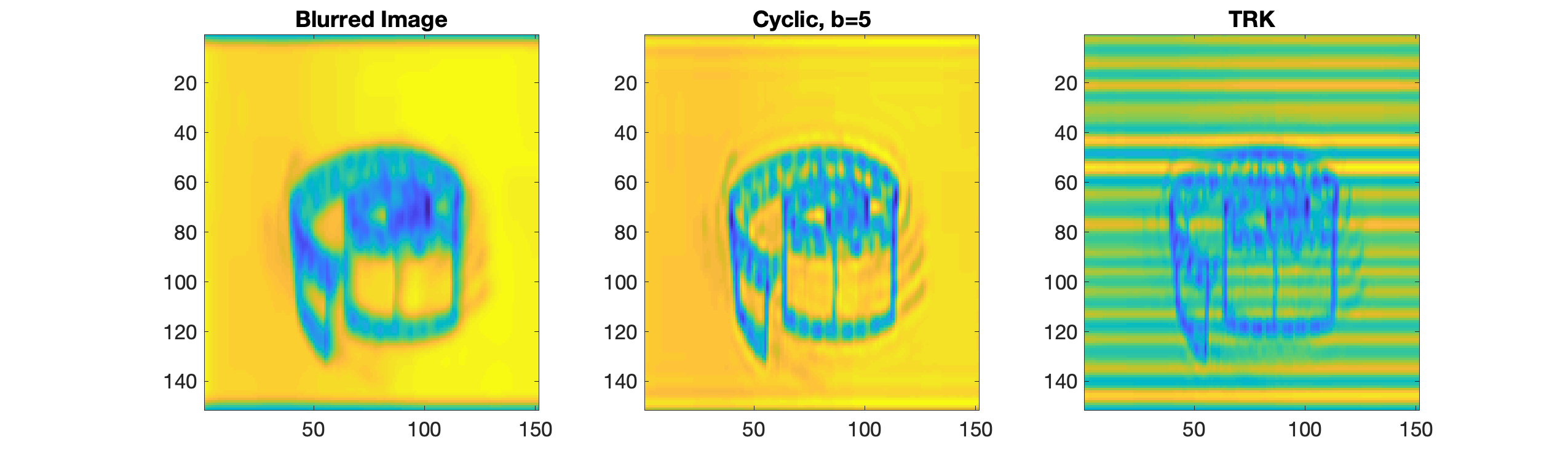}
\includegraphics[width=0.35\textwidth]{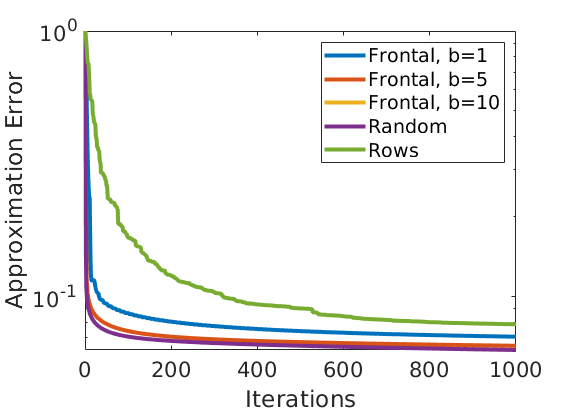}

\begin{tabular}{ccccc}
\toprule 
Method  & MSE  & PSNR  & SSIM  & Time (seconds)\tabularnewline
\midrule 
Cyclic (block size = 1)  & 1.091e-08  & 7.962e+01  & 1.000e+00  & 24.238\tabularnewline
\midrule 
Cyclic (block size = 5)  & 8.400e-09  & 8.076e+01  & 1.000e+00  & 23.965\tabularnewline
\midrule 
Cyclic (block size = 10)  & \textbf{7.789e-09}  & \textbf{8.109e+01}  & \textbf{1.000e+00}  & 26.500\tabularnewline
\midrule 
Random  & \textbf{7.789e-09}  & \textbf{8.109e+01}  & \textbf{1.000e+00}  & 26.048\tabularnewline
\midrule 
TRK  & 1.905e-08  & 7.720e+01  & 9.999e-01  & 2.303\tabularnewline
\bottomrule
\end{tabular}

\caption{\label{fig:experiments_Trimmed} Trimmed video tensor. Top: A frontal
slice (i.e., frame) from the resized ground truth video $\mathcal{X}[:,:,15]\in\mathbb{R}^{60\times60\times1}$,
the blurred video $\mathcal{B}[:,:,15]\in\mathbb{R}^{60\times60\times1}$,
the recovered slice when using cyclic frontal slice descent (Algorithm~\ref{alg:tsolve_cyclic_slices}),
random frontal slice descent (Algorithm~\ref{alg:tsolve_random_slices}),
and when using TRK \cite{tang2022sketch}. In the leftmost panel,
we show the final approximation error $\mathcal{E}(T)$ against iterations
with learning rate $\alpha=0.01$ and the maximal number of iterations
of 1000 steps. Bottom: A frontal slice (i.e., frame) from the ground
truth video $\mathcal{X}[:,:,15]\in\mathbb{R}^{151\times151\times1}$,
the blurred video $\mathcal{B}[:,:,15]\in\mathbb{R}^{151\times151\times1}$,
the recovered slice when using CFGD and TRK. In the leftmost panel,
we show the final approximation error $\mathcal{E}(T)$ against iterations
with learning rate $\alpha=0.001$ and the maximal number of iterations
of 200 steps.}
\end{figure}

In Figure \ref{fig:experiments_Trimmed}, we apply TRK and Algorithm~\ref{alg:tsolve_cyclic_slices}
on video data containing frames from the 1929 film ``Finding His
Voice"~\cite{videodata}. The video is the underlying tensor $\mathcal{X}$
with 31 video frames, each of size $151\times151$. The measurements
$\mathcal{B}=\mathcal{A}*\mathcal{X}$ where $\mathcal{A}$ is the
$151\times151\times151$ blurring operator with $B=9$ and $\sigma=4$.
The learning rate is selected to be $\alpha=0.001$. While both suffer
from the ill-conditioned nature of $\mathcal{A}$, Algorithm~\ref{alg:tsolve_cyclic_slices}
can attain a smaller error faster than TRK.

In addition, the visual quality of the recovery of the proposed method
is better than that of TRK, which can be seen in the convergence plot
in Figure~\ref{fig:experiments_Trimmed}. Figure~\ref{fig:experiments_Trimmed}
presents the original 15th video frame $\mathcal{X}[:,:,15]$, the
blurred video frame, $\mathcal{B}[:,:,15]$, the recovered slice when
using Algorithm~\ref{alg:tsolve_cyclic_slices}, and when using TRK.
The goal is to recover the original image, represented as a tensor
$\mathcal{X}$. The blurred image is $\mathcal{B}$, and the blurring
operation is represented by the tensor $\mathcal{A}$. The t-product
formulation naturally models this convolutional process, allowing
for efficient algorithms to deblur the image by solving \eqref{eq:tensorlinsys}
given known $\mathcal{A}$ and $\mathcal{B}$. The convolutional nature
of the t-product effectively captures the blurring effect and facilitates
the subsequent reconstruction of the original image through iterative
methods that utilize the convolutional structure.

\section{\label{sec:Future-works}Conclusion and Future works}

Our exploration of tensor sketching methods has highlighted the potential
of frontal sketching, but comparable analyses with row and column
sketching methods remain less developed. A crucial area for future
research involves establishing a robust criterion that would allow
for the selection between frontal, row, and column sketching methods
based on specific data characteristics or performance metrics. 

The primary goal will be to develop and validate a set of criteria
or metrics that can effectively determine the optimal sketching technique
for a given tensor dataset and application. These criteria may include,
but are not limited to, considerations of computational efficiency,
memory usage, data sparsity, and the preservation of tensor properties
critical to the application domain. Besides finding a necessary condition for the learning rate in our sketching algorithms, another goal is integrating adaptive
learning rates $\alpha$ (which varies as iterations proceeds) into
the dynamic sketching algorithm, where not only the sketching method
is chosen adaptively, but also the learning rates are adjusted based on real-time feedback from the sketching process, which has an intrinsic link to sparse likelihood approximation when certain kind of sparsity is encoded in the system \cite{hrluo_2019a,LHL2024,chi2012tensors}.

Building on the established criteria, the next phase will involve
designing an adaptive sketching algorithm. This algorithm will dynamically
switch between frontal, row, and column sketching based on real-time
analysis of the data's evolving characteristics during processing.
Such adaptability could significantly enhance performance in diverse
scenarios, ranging from real-time data streaming to large-scale data
processing where different sketching methods may be optimal at different
processing stages.

Although higher-order t-products can be defined in a recursive manner,
extending the sketching methods in the current work for higher-order
tensor t-products generate new challenges. The complexity of defining
and computing such products increases significantly with the tensor
order, thus computational complexity and numerical stability becomes a critical issue and careful consideration of algorithmic design and tensor operations are needed.
In addition, the t-product leverages the block circulant matrix representation
of tensors, where each frontal slice of a tensor is shifted and concatenated
to form a larger block circulant matrix, which can be seen as a form
of circular convolution applied along the third dimension (frontal
slices) of the tensors. As shown by \cite{reichel2022tensor}, the
natural convolution provided by the t-product between tensors opens
a new direction of representing graph-based models, and the sketching
developed for frontal dimensions allows more efficient computations
over them.

It is of interest to explore the application of t-product in compressible tensor systems, for example, the butterfly compression in vector systems have been proven to scale up well \cite{liu2021butterfly,liu2021sparse,kielstra2023tensor} shows promising results for Kronecker tensor system, it remains to explore how tensor compressions work along with other kind of tensor products and associated systems.

\section*{Acknowledgment}

HL was supported by the Director, Office of Science, of the U.S. Department
of Energy under Contract DE-AC02-05CH11231; U.S. National Science
Foundation NSF-DMS 2412403.
We also express our thanks to Dr. Yin-Ting Liao for earlier discussions in this work.

 \bibliographystyle{plain}
\bibliography{TRGS_refs}

\begin{thebibliography}{10}

\bibitem{bengua2017efficient}
Johann~A Bengua, Ho~N Phien, Hoang~Duong Tuan, and Minh~N Do.
\newblock {Efficient Tensor Completion for Color Image and Video Recovery:
  Low-rank Tensor Train}.
\newblock {\em IEEE Transactions on Image Processing}, 26(5):2466--2479, 2017.

\bibitem{bertsekas2011incremental}
Dimitri~P Bertsekas.
\newblock {Incremental Gradient, Subgradient, and Proximal Methods for Convex
  Optimization: A Survey}.
\newblock {\em Optimization for Machine Learning}, page~85, 2011.

\bibitem{bi2021tensors}
Xuan Bi, Xiwei Tang, Yubai Yuan, Yanqing Zhang, and Annie Qu.
\newblock {Tensors in Statistics}.
\newblock {\em {Annual Review of Statistics and its Application}}, 8:345--368,
  2021.

\bibitem{chen2021regularized}
Xuemei Chen and Jing Qin.
\newblock {Regularized Kaczmarz Algorithms for Tensor Recovery}.
\newblock {\em SIAM Journal on Imaging Sciences}, 14(4):1439--1471, 2021.

\bibitem{chen2024low}
Yang Chen, Ziyan Luo, and Lingchen Kong.
\newblock {Low-rank Tensor Regression for Selection of Grouped Variables}.
\newblock {\em Journal of Multivariate Analysis}, page 105339, 2024.

\bibitem{chi2020provable}
Eric~C Chi, Brian~J Gaines, Will~Wei Sun, Hua Zhou, and Jian Yang.
\newblock {Provable Convex Co-clustering of Tensors}.
\newblock {\em Journal of Machine Learning Research}, 21(214):1--58, 2020.

\bibitem{chi2012tensors}
Eric~C Chi and Tamara~G Kolda.
\newblock On tensors, sparsity, and nonnegative factorizations.
\newblock {\em SIAM Journal on Matrix Analysis and Applications},
  33(4):1272--1299, 2012.

\bibitem{cho2023surrogate}
Younghyun Cho, James~W Demmel, Micha{\l} Derezi{\'n}ski, Haoyun Li, Hengrui
  Luo, Michael~W Mahoney, and Riley~J Murray.
\newblock {Surrogate-based Autotuning for Randomized Sketching Algorithms in
  Regression Problems}.
\newblock {\em arXiv preprint arXiv:2308.15720}, 2023.

\bibitem{videodata}
Western~Electric Company.
\newblock {Finding his Voice}, 1929.

\bibitem{dehdezi2022rapid}
Eisa~Khosravi Dehdezi and Saeed Karimi.
\newblock {A Rapid and Powerful Iterative Method for Computing Inverses of
  Sparse Tensors with Applications}.
\newblock {\em Applied Mathematics and Computation}, 415:126720, 2022.

\bibitem{du2021randomized}
Kui Du and Xiao-Hui Sun.
\newblock {Randomized Regularized Extended Kaczmarz Algorithms for Tensor
  Recovery}.
\newblock {\em arXiv preprint arXiv:2112.08566}, 2021.

\bibitem{el2022tensor}
M~El~Guide, A~El~Ichi, K~Jbilou, and FPA Beik.
\newblock {Tensor Krylov Subspace Methods via the Einstein Product with
  Applications to Image and Video Processing}.
\newblock {\em Applied Numerical Mathematics}, 181:347--363, 2022.

\bibitem{friedman2008sparse}
Jerome Friedman, Trevor Hastie, and Robert Tibshirani.
\newblock {Sparse Inverse Covariance Estimation with the Graphical LASSO}.
\newblock {\em Biostatistics}, 9(3):432--441, 2008.

\bibitem{friedman2001greedy}
Jerome~H Friedman.
\newblock {Greedy Function Approximation: A Gradient Boosting Machine}.
\newblock {\em Annals of Statistics}, pages 1189--1232, 2001.

\bibitem{ghannam2023tensor}
Mai Ghannam and S{\'e}v{\'e}rien Nkurunziza.
\newblock {Tensor Stein-rules in a Generalized Tensor Regression Model}.
\newblock {\em Journal of Multivariate Analysis}, 198:105206, 2023.

\bibitem{gower2021adaptive}
Robert~M Gower, Denali Molitor, Jacob Moorman, and Deanna Needell.
\newblock {On Adaptive Sketch-and-project for Solving Linear Systems}.
\newblock {\em SIAM Journal on Matrix Analysis and Applications},
  42(2):954--989, 2021.

\bibitem{gower2015randomized}
Robert~M Gower and Peter Richt{\'a}rik.
\newblock {Randomized Iterative Methods for Linear Systems}.
\newblock {\em SIAM Journal on Matrix Analysis and Applications},
  36(4):1660--1690, 2015.

\bibitem{grasedyck2004existence}
Lars Grasedyck.
\newblock {Existence and Computation of Low Kronecker-rank Approximations for
  Large Linear Systems of Tensor Product Structure}.
\newblock {\em Computing}, 72:247--265, 2004.

\bibitem{hao2013facial}
Ning Hao, Misha~E Kilmer, Karen Braman, and Randy~C Hoover.
\newblock {Facial Recognition using Tensor-Tensor Decompositions}.
\newblock {\em SIAM Journal on Imaging Sciences}, 6(1):437--463, 2013.

\bibitem{he2014graphical}
Shiyuan He, Jianxin Yin, Hongzhe Li, and Xing Wang.
\newblock {Graphical Model Selection and Estimation for High-dimensional Tensor
  Data}.
\newblock {\em Journal of Multivariate Analysis}, 128:165--185, 2014.

\bibitem{kernfeld2015tensor}
Eric Kernfeld, Misha Kilmer, and Shuchin Aeron.
\newblock {Tensor-tensor Products with Invertible Linear Transforms}.
\newblock {\em Linear Algebra and its Applications}, 485:545--570, 2015.

\bibitem{kielstra2023tensor}
Paul~Michael Kielstra, Tianyi Shi, Hengrui Luo, Jianliang Qian, and Yang Liu.
\newblock {A Linear-Complexity Tensor Butterfly Algorithm for Compressing
  High-Dimensional Oscillatory Integral Operators}.
\newblock {\em ongoing}, pages 1--39, 2024+.

\bibitem{kilmer2011factorization}
Misha~E Kilmer and Carla~D Martin.
\newblock {Factorization Strategies for Third-order Tensors}.
\newblock {\em Linear Algebra and its Applications}, 435(3):641--658, 2011.

\bibitem{kolda2009tensor}
Tamara~G Kolda and Brett~W Bader.
\newblock {Tensor Decompositions and Applications}.
\newblock {\em SIAM Review}, 51(3):455--500, 2009.

\bibitem{li2022gradient}
Tao Li, Qing-Wen Wang, and Xin-Fang Zhang.
\newblock {Gradient based Iterative Methods for Solving Symmetric Tensor
  Equations}.
\newblock {\em Numerical Linear Algebra with Applications}, 29(2):e2414, 2022.

\bibitem{li2017near}
Xingguo Li, Jarvis Haupt, and David Woodruff.
\newblock {Near Optimal Sketching of Low-rank Tensor Regression}.
\newblock {\em Advances in Neural Information Processing Systems}, 30, 2017.

\bibitem{liang2019alternating}
Maolin Liang, Bing Zheng, and Ruijuan Zhao.
\newblock {Alternating Iterative Methods for Solving Tensor Equations with
  Applications}.
\newblock {\em Numerical Algorithms}, 80(4):1437--1465, 2019.

\bibitem{liu2022tensor}
Cuiyu Liu, Chuanfu Xiao, Mingshuo Ding, and Chao Yang.
\newblock {Tensor-Based Sketching Method for the Low-Rank Approximation of Data
  Streams}.
\newblock {\em arXiv preprint arXiv:2209.14637}, 2022.

\bibitem{liu2008hadamard}
Shuangzhe Liu, Gotz Trenkler, et~al.
\newblock {Hadamard, Khatri-Rao, Kronecker and other Matrix Products}.
\newblock {\em International Journal of Information and Systems Sciences},
  4(1):160--177, 2008.

\bibitem{liu2021sparse}
Yang Liu, Pieter Ghysels, Lisa Claus, and Xiaoye~Sherry Li.
\newblock Sparse approximate multifrontal factorization with butterfly
  compression for high-frequency wave equations.
\newblock {\em SIAM Journal on Scientific Computing}, 43(5):S367--S391, 2021.

\bibitem{liu2021butterfly}
Yang Liu, Xin Xing, Han Guo, Eric Michielssen, Pieter Ghysels, and
  Xiaoye~Sherry Li.
\newblock {Butterfly Factorization via Randomized Matrix-vector
  Multiplications}.
\newblock {\em SIAM Journal on Scientific Computing}, 43(2):A883--A907, 2021.

\bibitem{lu2024statistical}
Wenqi Lu, Zhongyi Zhu, Rui Li, and Heng Lian.
\newblock {Statistical Performance of Quantile Tensor Regression with Convex
  Regularization}.
\newblock {\em Journal of Multivariate Analysis}, 200:105249, 2024.

\bibitem{LHL2024}
Hengrui Luo, Akira Horiguchi, and Li~Ma.
\newblock {Efficient Decision Trees for Tensor Regressions}.
\newblock {\em arXiv preprint arXiv:2408.01926}, 2024.

\bibitem{hrluo_2019a}
Hengrui Luo, Giovanni Nattino, and Matthew~T. Pratola.
\newblock {Sparse Additive Gaussian Process Regression}.
\newblock {\em Journal of Machine Learning Research}, 23(61):1--34, 2022.

\bibitem{lyu2023bayesian}
Rongke Lyu, Marina Vannucci, and Suprateek Kundu.
\newblock {Bayesian Tensor Modeling for Image-based Classification of
  Alzheimer's Disease}.
\newblock {\em arXiv preprint arXiv:2312.08587}, 2023.

\bibitem{ma2022randomized}
Anna Ma and Denali Molitor.
\newblock {Randomized Kaczmarz for Tensor Linear Systems}.
\newblock {\em BIT Numerical Mathematics}, 62(1):171--194, 2022.

\bibitem{ma2015convergence}
Anna Ma, Deanna Needell, and Aaditya Ramdas.
\newblock {Convergence Properties of the Randomized Extended Gauss--Seidel and
  Kaczmarz Methods}.
\newblock {\em SIAM Journal on Matrix Analysis and Applications},
  36(4):1590--1604, 2015.

\bibitem{martin2013order}
Carla~D Martin, Richard Shafer, and Betsy LaRue.
\newblock {An Order-p Tensor Factorization with Applications in Imaging}.
\newblock {\em SIAM Journal on Scientific Computing}, 35(1):A474--A490, 2013.

\bibitem{murray2023randomized}
Riley Murray, James Demmel, Michael~W. Mahoney, N.~Benjamin Erichson, Maksim
  Melnichenko, Osman~Asif Malik, Laura Grigori, Micha\l\~and Miles E.~Lopes
  Derezi\'{n}ski, Tianyu Liang, Hengrui Luo, and Jack~J. Dongarra.
\newblock {Randomized Numerical Linear Algebra: A Perspective on the Field with
  an Eye to Software}.
\newblock {\em arXiv preprint arXiv:2302.11474}, 2023.

\bibitem{raskutti2016statistical}
Garvesh Raskutti and Michael~W Mahoney.
\newblock {A Statistical Perspective on Randomized Sketching for Ordinary
  Least-squares}.
\newblock {\em Journal of Machine Learning Research}, 17(213):1--31, 2016.

\bibitem{reichel2022tensor}
Lothar Reichel and Ugochukwu~O Ugwu.
\newblock {Tensor Arnoldi--Tikhonov and GMRES-type Methods for Ill-posed
  Problems with a t-product Structure}.
\newblock {\em Journal of Scientific Computing}, 90:1--39, 2022.

\bibitem{sidiropoulos2017tensor}
Nicholas~D Sidiropoulos, Lieven De~Lathauwer, Xiao Fu, Kejun Huang, Evangelos~E
  Papalexakis, and Christos Faloutsos.
\newblock {Tensor Decomposition for Signal Processing and Machine Learning}.
\newblock {\em IEEE Transactions on Signal Processing}, 65(13):3551--3582,
  2017.

\bibitem{song2019robust}
Guangjing Song, Michael~K. Ng, and Xiongjun Zhang.
\newblock {Robust Tensor Completion using Transformed Tensor Singular Value
  Decomposition}.
\newblock {\em Numerical Linear Algebra with Applications}, 27(3):e2299, 2020.

\bibitem{tang2022sketch}
Ling Tang, Yanjun Zhang, and Hanyu Li.
\newblock {On Sketch-and-project Methods for Solving Tensor Equations}.
\newblock {\em arXiv preprint arXiv:2210.08241}, 2022.

\bibitem{tropp2017practical}
Joel~A Tropp, Alp Yurtsever, Madeleine Udell, and Volkan Cevher.
\newblock {Practical Sketching Algorithms for Low-rank Matrix Approximation}.
\newblock {\em SIAM Journal on Matrix Analysis and Applications},
  38(4):1454--1485, 2017.

\bibitem{wang2015fast}
Yining Wang, Hsiao-Yu Tung, Alexander~J Smola, and Anima Anandkumar.
\newblock {Fast and Guaranteed Tensor Decomposition via Sketching}.
\newblock {\em Advances in Neural Information Processing Systems}, 28, 2015.

\bibitem{wilkinson_distance-preserving_2022}
Leland Wilkinson and Hengrui Luo.
\newblock {A Distance-Preserving Matrix Sketch}.
\newblock {\em Journal of Computational and Graphical Statistics}, pages 1--15,
  2022.

\end{thebibliography}

\newpage{}

\appendix

\section{Notation Conventions}

We present the notations we used for different mathematical objects. 
\begin{center}
\begin{tabular}{|c|c|}
\hline 
Definition  & Notation\tabularnewline
\hline 
\hline 
Left coefficient tensor  & $\mathcal{A}\in\mathbb{R}^{n_{1}\times n_{2}\times n}$\tabularnewline
\hline 
Right coefficient tensor  & $\mathcal{B}\in\mathbb{R}^{n_{1}\times n_{3}\times n}$\tabularnewline
\hline 
Solution tensor  & $\mathcal{X}\in\mathbb{R}^{n_{2}\times n_{3}\times n}$\tabularnewline
\hline 
True solution tensor  & $\mathcal{X}_{*}\in\mathbb{R}^{n_{2}\times n_{3}\times n}$\tabularnewline
\hline 
Row/column/frontal slice  & $\mathcal{A}_{i::}=\mathcal{A}_{i}\in\mathbb{R}^{n_{1}\times n_{2}\times1}$\tabularnewline
\hline 
Padded slice  & $\tilde{\mathcal{A}}_{i}$ $\in\mathbb{R}^{n_{1}\times n_{2}\times n}$\tabularnewline
\hline 
Solution tensor at iteration $t$  & $\mathcal{X}(t)$\tabularnewline
\hline 
Residual tensor at iteration $t$  & $\mathcal{R}(t)$\tabularnewline
\hline 
Error scalar at iteration $t$  & $\mathcal{E}(t)$\tabularnewline
\hline 
\end{tabular}
\par\end{center}

\section{Proof of Theorem \ref{thm:conv-cyc-n2}: frontal size $n=2$}

\label{sec:cyc-n2} We first consider the case where $n=2$. 
\begin{proof}
In this case, the algorithm cycles through the 2 frontal slices one
at a time, and the update in Algorithm~\ref{alg:tsolve_cyclic_slices}
can then be simplified into the following cases:

Iteration $t=2k$, the slice $\tilde{\mathcal{A}}_{2}$ is chosen
and we have 
\begin{align}
\mathcal{R}(2k) & =\mathcal{B}-\tilde{\mathcal{A}}_{1}*\mathcal{X}(2k-2)-\tilde{\mathcal{A}}_{2}*\mathcal{X}(2k-1),\label{eq:rupdate2_even}\\
\mathcal{X}(2k) & =\mathcal{X}(2k-1)+\alpha\tilde{\mathcal{A}}_{2}^{T}*\mathcal{R}(2k)\nonumber \\
 & =\left(\mathcal{I}-\alpha\tilde{\mathcal{A}}_{2}^{T}*\tilde{\mathcal{A}}_{2}\right)*\mathcal{X}(2k-1)-\alpha\tilde{\mathcal{A}}_{2}^{T}*\tilde{\mathcal{A}}_{1}*\mathcal{X}(2k-2)+\alpha\tilde{\mathcal{A}}_{2}^{T}*\mathcal{B}.\label{eq:xupdate2_even}
\end{align}

Iteration $t=2k+1$, the slice $\tilde{\mathcal{A}}_{1}$ is chosen
and we obtain 
\begin{align}
\mathcal{R}(2k+1) & =\mathcal{B}-\tilde{\mathcal{A}}_{1}*\mathcal{X}(2k)-\tilde{\mathcal{A}}_{2}*\mathcal{X}(2k-1)\label{eq:rupdate2_odd}\\
\mathcal{X}(2k+1) & =\mathcal{X}(2k)+\alpha\tilde{\mathcal{A}}_{1}^{T}*\mathcal{R}(2k+1)\nonumber \\
 & =\left(\mathcal{I}-\alpha\tilde{\mathcal{A}}_{1}^{T}*\tilde{\mathcal{A}}_{1}\right)*\mathcal{X}(2k)-\alpha\tilde{\mathcal{A}}_{1}^{T}*\tilde{\mathcal{A}}_{2}*\mathcal{X}(2k-1)+\alpha\tilde{\mathcal{A}}_{1}^{T}*\mathcal{B}\label{eq:xupdate2_odd}
\end{align}

Let the (scalar) error between the current solution $\mathcal{X}(k)$
and the actual solution $\mathcal{X}_{*}$ at iteration $k$ be $\mathcal{E}(k):=\Vert\mathcal{X}(k)-\mathcal{X}_{*}\Vert$.
Recall $\mathcal{X}_{*}$ satisfies $(\tilde{\mathcal{A}}_{1}+\tilde{\mathcal{A}}_{2})*\mathcal{X}_{*}=\mathcal{A}*\mathcal{X}_{*}=\mathcal{B}$.
According to the two recursive relations in \eqref{eq:xupdate2_even}
and \eqref{eq:xupdate2_odd}, we recognize that \eqref{eq:xupdate2_even}
can be used to compute $\mathcal{E}(2k)$ 
\begin{align}
\mathcal{E}(2k) & =\left\Vert \mathcal{X}(2k)-\mathcal{X}_{*}\right\Vert \nonumber \\
 & =\left\Vert \left(\mathcal{I}-\alpha\tilde{\mathcal{A}}_{2}^{T}*\tilde{\mathcal{A}}_{2}\right)*\mathcal{X}(2k-1)-\alpha\tilde{\mathcal{A}}_{2}^{T}*\tilde{\mathcal{A}}_{1}*\mathcal{X}(2k-2)+\alpha\tilde{\mathcal{A}}_{2}^{T}*\left(\tilde{\mathcal{A}}_{1}+\tilde{\mathcal{A}}_{2}\right)*\mathcal{X}_{*}-\mathcal{X}_{*}\right\Vert \nonumber \\
 & =\left\Vert \left(\mathcal{I}-\alpha\tilde{\mathcal{A}}_{2}^{T}*\tilde{\mathcal{A}}_{2}\right)*\left(\mathcal{X}(2k-1)-\mathcal{X}_{*}\right)-\alpha\tilde{\mathcal{A}}_{2}^{T}*\tilde{\mathcal{A}}_{1}\left(\mathcal{X}(2k-2)-\mathcal{X}_{*}\right)\right\Vert \nonumber \\
 & \leq\left\Vert \mathcal{I}-\alpha\tilde{\mathcal{A}}_{2}^{T}*\tilde{\mathcal{A}}_{2}\right\Vert _{op}\left\Vert \mathcal{X}(2k-1)-\mathcal{X}_{*}\right\Vert +\left\Vert \alpha\tilde{\mathcal{A}}_{2}^{T}*\tilde{\mathcal{A}}_{1}\right\Vert _{op}\left\Vert \mathcal{X}(2k-2)-\mathcal{X}_{*}\right\Vert \nonumber \\
 & =\left\Vert \mathcal{I}-\alpha\tilde{\mathcal{A}}_{2}^{T}*\tilde{\mathcal{A}}_{2}\right\Vert _{op}\mathcal{E}(2k-1)+\left\Vert \alpha\tilde{\mathcal{A}}_{2}^{T}*\tilde{\mathcal{A}}_{1}\right\Vert _{op}\mathcal{E}(2k-2),\label{eq:recursive1}
\end{align}
and similarly we can use \eqref{eq:xupdate2_odd} for $\mathcal{E}(2k+1)$,
to yield 
\begin{align}
\mathcal{E}(2k+1)\leq\left\Vert \mathcal{I}-\alpha\tilde{\mathcal{A}}_{1}^{T}*\tilde{\mathcal{A}}_{1}\right\Vert _{op}\mathcal{E}(2k)+\left\Vert \alpha\tilde{\mathcal{A}}_{1}^{T}*\tilde{\mathcal{A}}_{2}\right\Vert _{op}\mathcal{E}(2k-1).\label{eq:recursive2}
\end{align}
Then we can combine \eqref{eq:recursive1} and \eqref{eq:recursive2}
reduce to 
\begin{align}
 & \mathcal{E}(2k)\leq\left\Vert \mathcal{I}-\alpha\tilde{\mathcal{A}}_{2}^{T}*\tilde{\mathcal{A}}_{2}\right\Vert _{op}\mathcal{E}(2k-1)+\left\Vert \alpha\tilde{\mathcal{A}}_{2}^{T}*\tilde{\mathcal{A}}_{1}\right\Vert _{op}\mathcal{E}(2k-2),\nonumber \\
 & \mathcal{E}(2k+1)\leq\left\Vert \mathcal{I}-\alpha\tilde{\mathcal{A}}_{1}^{T}*\tilde{\mathcal{A}}_{1}\right\Vert _{op}\mathcal{E}(2k)+\left\Vert \alpha\tilde{\mathcal{A}}_{1}^{T}*\tilde{\mathcal{A}}_{2}\right\Vert _{op}\mathcal{E}(2k-1).\label{eq:recursive_simplified}
\end{align}
Now let $\kappa_{1}=\|\mathcal{I}-\alpha\tilde{\mathcal{A}}_{2}^{T}*\tilde{\mathcal{A}}_{2}\|_{op}$,
$\kappa_{2}=\|\mathcal{I}-\alpha\tilde{\mathcal{A}}_{2}^{T}*\tilde{\mathcal{A}}_{2}\|_{op}$,
and $\mu=\|\tilde{\mathcal{A}}_{1}^{T}*\tilde{\mathcal{A}}_{2}\|_{op}$.
We can pick $\alpha$ so that $\kappa_{1},\kappa_{2}<1$. Let the
union bound of errors of the consecutive iterations $\mathcal{M}_{k}\coloneqq\max\left\{ \|\mathcal{E}(2k-1)\|,\|\mathcal{E}(2k-2)\|\right\} $.
Then \eqref{eq:recursive1} and \eqref{eq:recursive2} reduce to 
\begin{align*}
\mathcal{E}(2k) & \leq\kappa_{2}\mathcal{E}(2k-1)+\alpha\mu\mathcal{E}(2k-2),\\
 & =\left(\kappa_{2}+\alpha\mu\right)\mathcal{M}_{k},\\
\mathcal{E}(2k+1) & \leq\kappa_{1}\mathcal{E}(2k)+\alpha\mu\mathcal{E}(2k-1)\\
 & \leq\kappa_{1}\left(\kappa_{2}+\alpha\mu\right)\mathcal{M}_{k}+\alpha\mu\mathcal{E}(2k-1)\\
 & \leq(\kappa_{1}\kappa_{2}+(1+\kappa_{1})\alpha\mu)\mathcal{M}_{k},
\end{align*}
and thus, 
\[
\mathcal{M}_{k+1}\leq C\mathcal{M}_{k},
\]
where $C=\max\{\kappa_{2}+\alpha\mu,\kappa_{1}\kappa_{2}+(1+\kappa_{1})\alpha\mu\}$
and if $\alpha<\min\{\frac{1-\kappa_{2}}{\mu},\frac{1-\kappa_{1}\kappa_{2}}{\mu(1+\kappa_{2})}\}$
then $C<1$. Namely, we proved the result. 
\end{proof}
\begin{rem}
From the above proof, we know that the convergence rate is controlled
by the correlation $\mu$ between slices defined by t-product, and
the learning rate $\alpha$.

Note that if frontal slices are mutually orthogonal, e.g., if we have
a diagonal tensor $\mathcal{A}$, then $\mu=0$ and 
\begin{align*}
\mathcal{E}(2k) & \leq\kappa_{2}\mathcal{E}(2k-1)\\
\mathcal{E}(2k+1) & \leq\kappa_{1}\mathcal{E}(2k)\leq\kappa_{1}^{k}\kappa_{2}^{k}\mathcal{E}(0).
\end{align*}
Since $\kappa_{1},\kappa_{2}<1$, this implies exponential convergence
of Cyclic Slice Descent (Algorithm \ref{alg:tsolve_cyclic_slices})
in the form of $\mathcal{E}(t)\leq\kappa^{t+1}\mathcal{E}(0)$ with
factor $\kappa$ where $\kappa\coloneqq\max_{j=1,2}\|\mathcal{I}-\alpha\mathcal{\tilde{A}}_{j}^{T}*\tilde{\mathcal{A}}_{j}\|_{op}$
for $n=2{\color{orange}}$. 

\end{rem}

\section{Proof of Theorem~\ref{thm:conv-cyclic}}
\label{subsec:conv-cyclic}
Following the idea of proof in Appendix
\ref{sec:cyc-n2}, we extend to the case where $n>2$ for Algorithm
\ref{alg:tsolve_cyclic_slices}. We list the assumptions for our reference:

To prove Theorem~\ref{thm:conv-cyclic}, we will show that the upper bound on the approximation error decreases with every iteration. Lemma~\ref{lem:error} provides a relationship between the current and previous $n$ approximation errors. This is used in Lemma~\ref{lem:contract} to define the bounding coefficient. Lemma~\ref{lem:decrease} shows that the bounding coefficient is decreasing. Before we prove these lemmas and the main theorem, we remind the reader of useful notation and assumptions in this section. In particular, we let
\begin{align}
\kappa & =\max_{i=1,...n}\|\mathcal{I}-\alpha\tilde{\mathcal{A}}_{i}^{T}*\tilde{\mathcal{A}}_{i}\|_{op},\label{eq:def_kappa_mu}\\
\mu & =\max_{\substack{i,j=1,...,n\\
i\neq j
}
}\|\tilde{\mathcal{A}}_{i}^{T}*\tilde{\mathcal{A}}_{j}\|_{op},\nonumber 
\end{align}
and $\|\cdot\|_{op}$
is the operator norm, defined as $\|\mathcal{A}\|_{op}\coloneqq\sup_{\|\mathcal{X}\|=1}\|\mathcal{A}*\mathcal{X}\|_{2}$. We adopt the convention that $\mathcal{X}(t) = 0$ when $t < 0$ and thus, $\mathcal{E}(t) = \|\mathcal{X}^* \|_F$ when $t < 0$.

\begin{assum} We assume the learning rate $\alpha$ is chosen such that $\kappa < 1$ and 
$$ \kappa + \alpha \mu (n-1)<1.$$
    \label{assum:alpha}
\end{assum}

\begin{lem} The approximation error at iteration $t+1$ is bounded by a function of error from the previous $n$ iterations:
    \begin{equation}
       \mathcal{E}(t+1)  \leq\kappa\mathcal{E}(t)+\alpha\mu\sum_{i=1}^{n-1}\mathcal{E}(t-i).\label{eq:iter_bound}
    \end{equation}
    \label{lem:error}
\end{lem}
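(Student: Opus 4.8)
The plan is to convert the two update lines of Algorithm~\ref{alg:tsolve_cyclic_slices} (with $s=1$) into a single linear recursion for the error $\mathcal{E}(t)=\|\mathcal{X}(t)-\mathcal{X}^*\|_F$ and then bound that recursion termwise using $\kappa$ and $\mu$. The first and most important step is a closed form for the approximate residual. Writing $j(s)=\mathrm{mod}(s,n)+1$ for the cyclically chosen slice at step $s$, I would show by induction on $t$ that, with the convention $\mathcal{X}(s)=0$ for $s\le 0$,
\[
\mathcal{R}(t+1)=\mathcal{B}-\sum_{i=0}^{n-1}\tilde{\mathcal{A}}_{j(t-i)}*\mathcal{X}(t-i).
\]
The induction uses the residual-update line directly: at step $t$ the recursion subtracts the freshly updated contribution $\tilde{\mathcal{A}}_{j(t)}*\mathcal{X}(t)$ and re-adds the contribution of the same slice from its previous visit, so that when the index window slides from $\{t-1,\dots,t-n\}$ to $\{t,\dots,t-n+1\}$ exactly one summand enters and one leaves. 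The structural fact that makes this work, and the precise point where cyclic selection is essential, is that the $n$ indices $j(t),j(t-1),\dots,j(t-n+1)$ form a complete residue system modulo $n$ and hence enumerate all frontal slices exactly once.

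Given the residual identity, I would invoke consistency. Since $\mathcal{A}*\mathcal{X}^*=\mathcal{B}$, $\mathcal{A}=\sum_{j=1}^n\tilde{\mathcal{A}}_j$, and $\{j(t-i)\}_{i=0}^{n-1}$ enumerates all slices, we may write $\mathcal{B}=\sum_{i=0}^{n-1}\tilde{\mathcal{A}}_{j(t-i)}*\mathcal{X}^*$ and therefore
\[
\mathcal{R}(t+1)=-\sum_{i=0}^{n-1}\tilde{\mathcal{A}}_{j(t-i)}*\big(\mathcal{X}(t-i)-\mathcal{X}^*\big).
\]
Substituting this into the solution update $\mathcal{X}(t+1)=\mathcal{X}(t)+\alpha\,\tilde{\mathcal{A}}_{j(t)}^{T}*\mathcal{R}(t+1)$ and subtracting $\mathcal{X}^*$, I would peel off the $i=0$ summand (whose slice $j(t)$ matches the outer transpose) to form the self-interaction $\mathcal{I}-\alpha\tilde{\mathcal{A}}_{j(t)}^{T}*\tilde{\mathcal{A}}_{j(t)}$, leaving the $n-1$ cross terms. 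This gives the exact error recursion
\[
\mathcal{X}(t+1)-\mathcal{X}^*=\big(\mathcal{I}-\alpha\tilde{\mathcal{A}}_{j(t)}^{T}*\tilde{\mathcal{A}}_{j(t)}\big)*\big(\mathcal{X}(t)-\mathcal{X}^*\big)-\alpha\sum_{i=1}^{n-1}\tilde{\mathcal{A}}_{j(t)}^{T}*\tilde{\mathcal{A}}_{j(t-i)}*\big(\mathcal{X}(t-i)-\mathcal{X}^*\big).
\]

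Finally I would take Frobenius norms, apply the triangle inequality, and use submultiplicativity $\|\mathcal{A}*\mathcal{X}\|_F\le\|\mathcal{A}\|_{op}\|\mathcal{X}\|_F$ from Remark~\ref{rem:op-fro}. The self-interaction factor is bounded by $\max_i\|\mathcal{I}-\alpha\tilde{\mathcal{A}}_i^{T}*\tilde{\mathcal{A}}_i\|_{op}=\kappa$, and each cross term is bounded by $\mu$ because $j(t-i)\neq j(t)$ for $1\le i\le n-1$ (the window slices are distinct), producing exactly $\mathcal{E}(t+1)\le\kappa\,\mathcal{E}(t)+\alpha\mu\sum_{i=1}^{n-1}\mathcal{E}(t-i)$, which is Lemma~\ref{lem:error}. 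I expect the main obstacle to be the bookkeeping in the residual identity: one must check that the telescoping re-add is indexed at the previous occurrence of the active slice, and that together with the convention $\mathcal{X}(s)=0$ (equivalently $\mathcal{E}(s)=\|\mathcal{X}^*\|_F$) for $s\le 0$ the closed form and the final bound remain valid uniformly in the transient regime $t<n$, where several summands simply vanish. Everything past the residual identity is a routine norm estimate.
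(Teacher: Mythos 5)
Your proposal is correct and follows essentially the same route as the paper's proof in Appendix C: derive the closed-form residual $\mathcal{R}(t+1)=\sum_{i=0}^{n-1}\tilde{\mathcal{A}}_{[(t-i)\,\mathrm{mod}\,n]+1}*(\mathcal{X}_{*}-\mathcal{X}(t-i))$ from the update rule and consistency, substitute into the iterate, peel off the self-interaction term $\mathcal{I}-\alpha\tilde{\mathcal{A}}_{j(t)}^{T}*\tilde{\mathcal{A}}_{j(t)}$, and finish with the triangle inequality and the operator-norm bound of Remark~\ref{rem:op-fro}. Your explicit induction for the telescoping residual identity and the observation that the sliding window enumerates a complete residue system mod $n$ make the bookkeeping more careful than the paper's, which asserts the closed form directly, but the argument is the same.
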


\begin{proof}
Recall that the iteration step in Algorithm \ref{alg:tsolve_cyclic_slices}:
\begin{equation*}
\mathcal{X}(t+1)=\mathcal{X}(t)+\alpha\tilde{A}_{[t\Mod{n}]+1}^{T}*\mathcal{R}(t+1),
\end{equation*}
such that, since $\mathcal{X}(t) = 0$ for $t<0$, we can consider the last $n$ iterated slices.
\begin{align}
 \mathcal{R}(t+1)&=\mathcal{R}(t)-\tilde{\mathcal{A}}_{i}*\mathcal{X}(t)+\tilde{\mathcal{A}}_{i}*\mathcal{X}(t-n+1) \nonumber \\
 & =\mathcal{B}-\left( \tilde{\mathcal{A}}_{t\Mod n+1}*\mathcal{X}(t)+\tilde{\mathcal{A}}_{(t-1)\Mod n+1}*\mathcal{X}(t-1)+\right. \nonumber\\
 & \left. \cdots+\tilde{\mathcal{A}}_{(t-n+1)\mod n+1}*\mathcal{X}(t-n+1)\right)\nonumber \\
 & =\mathcal{B}-\sum_{i=0}^{n-1}\tilde{\mathcal{A}}_{[(t-i)\Mod n]+1}*\mathcal{X}(t-i)\nonumber \\
 &=\sum_{i=0}^{n-1}\tilde{\mathcal{A}}_{[(t-i)\Mod n]+1}*\left(\mathcal{X}_{*}-\mathcal{X}(t-i)\right).\label{eq:residual_recursion_general}
\end{align}
The error $\mathcal{E}(t+1)$ can be bounded
as: 
\begin{align}
 \mathcal{E}(t+1)\nonumber & =\left\Vert \mathcal{X}(t+1)-\mathcal{X}_{*}\right\Vert _{F}\nonumber \\
 & =\left\Vert \mathcal{X}(t)+\alpha\tilde{\mathcal{A}}_{\left[t\Mod n\right]+1}^{T}*\mathcal{R}(t+1)-\mathcal{X}_{*}\right\Vert _{F}\label{eq:loose_bound} \\
 & \leq\left\Vert \left(\mathcal{I}-\alpha\tilde{\mathcal{A}}_{\left[t\Mod n\right]+1}^{T}*\tilde{\mathcal{A}}_{\left[t\Mod n\right]+1}\right)*\left(\mathcal{X}(t)-\mathcal{X}_{*}\right)\right\Vert _{F}\nonumber \\
 & \quad\quad+\left\Vert \alpha\tilde{\mathcal{A}}_{\left[t\Mod n\right]+1}^{T}*\left(\sum_{i=1}^{n-1}\tilde{\mathcal{A}}_{\left[(t-i)\Mod n\right]+1}*\left(\mathcal{X}_{*}-\mathcal{X}(t-i)\right)\right)\right\Vert _{F}\\
 & \leq\left\Vert \mathcal{I}-\alpha\tilde{\mathcal{A}}_{\left[t\Mod n\right]+1}^{T}*\tilde{\mathcal{A}}_{\left[t\Mod n\right]+1}\right\Vert _{op}\left\Vert \left(\mathcal{X}_{*}-\mathcal{X}(t)\right)\right\Vert _{F}\nonumber \\
 & \quad\quad +\alpha\sum_{i=1}^{n-1}\left\Vert \tilde{\mathcal{A}}_{\left[t\Mod n\right]+1}^{T}*\tilde{\mathcal{A}}_{\left[(t-i)\Mod n\right]+1}\right\Vert _{op}\left\Vert \left(\mathcal{X}_{*}-\mathcal{X}(t-i)\right)\right\Vert _{F} \label{eq:loose_bound2}\\
 & =\left\Vert \mathcal{I}-\alpha\tilde{\mathcal{A}}_{\left[t\Mod n\right]+1}^{T}*\tilde{\mathcal{A}}_{\left[t\Mod n\right]+1}\right\Vert _{op}\mathcal{E}(t)\nonumber\\
 & +\alpha\sum_{i=1}^{n-1}\left\Vert \tilde{\mathcal{A}}_{\left[t\Mod n\right]+1}^{T}*\tilde{\mathcal{A}}_{\left[(t-i)\Mod n\right]+1}\right\Vert _{op}\mathcal{E}(t-i) \\
 & \leq\kappa\mathcal{E}(t)+\alpha\mu\sum_{i=1}^{n-1}\mathcal{E}(t-i),\label{eq:iter_bound}
\end{align}
where the first inequality follows from \eqref{eq:residual_recursion_general} and the triangle inequality, the second inequality follows from Remark~\ref{rem:op-fro}, and then the last inequality uses the definitions of $\kappa$ and $\mu$.

In \eqref{eq:loose_bound} and \eqref{eq:loose_bound2}, we have rather conservative bounds from these inequalities. Although there are scenarios where the equalities hold in these two inequalities, it is possible to improve these bounds.
\end{proof}

\begin{lem} Define $\epsilon_{t} \in \mathbb{R} $ as 
\begin{equation}
\epsilon_{t} := 
    \begin{cases}
\kappa^{t} + \alpha \mu \sum_{j=0}^{t-1} \kappa^{j} \sum_{i=1}^{n-1} \epsilon_{t-1-j-i} & \text{if } t \geq 1 \\ 
1 & \text{otherwise.}
\end{cases}
\label{eq:eps}
\end{equation}
For all $t \geq 1$, the approximation error at iteration $t$ is bounded above by: 
\begin{equation}
\mathcal{E}(t) \leq \epsilon_{t} \mathcal{E}(0).
    \label{eq:contract}
\end{equation}
    \label{lem:contract}
\end{lem}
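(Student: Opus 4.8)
The plan is to prove the bound \eqref{eq:contract} by \emph{strong induction} on $t$, using the one-step recursion of Lemma~\ref{lem:error} as the driving inequality and treating the closed form \eqref{eq:eps} as nothing more than the unrolled version of a simple linear recurrence. The first thing I would establish is the purely algebraic fact that the sequence $\epsilon_t$ defined in \eqref{eq:eps} satisfies
\[
\epsilon_t = \kappa\,\epsilon_{t-1} + \alpha\mu \sum_{i=1}^{n-1} \epsilon_{t-1-i}, \qquad t \geq 1,
\]
with the convention $\epsilon_s = 1$ for $s \leq 0$. To see this, I would substitute the closed form for $\epsilon_{t-1}$ into $\kappa\epsilon_{t-1}$ and re-index the outer geometric sum by $j \mapsto j+1$; this produces precisely the $j \geq 1$ terms of \eqref{eq:eps}, while the remaining block $\alpha\mu\sum_{i=1}^{n-1}\epsilon_{t-1-i}$ supplies exactly the missing $j=0$ term. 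Recombining the two reconstitutes \eqref{eq:eps}, so the closed form and the one-step recursion agree. This equivalence is the only nonroutine computation in the whole argument.

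With that identity recorded, the induction itself is short. For the base cases $t \leq 0$, the convention $\mathcal{X}(t)=\mathbf{0}$ gives $\mathcal{E}(t) = \|\mathcal{X}^*\|_F = \mathcal{E}(0)$, and since $\epsilon_t = 1$ there, the claim $\mathcal{E}(t) \leq \epsilon_t \mathcal{E}(0)$ holds with equality. For the inductive step I would fix $t \geq 1$ and assume $\mathcal{E}(s) \leq \epsilon_s \mathcal{E}(0)$ for all $s < t$ (negative indices included). Applying Lemma~\ref{lem:error} at index $t-1$, which is legitimate because $t-1 \geq 0$, yields
\[
\mathcal{E}(t) \leq \kappa\,\mathcal{E}(t-1) + \alpha\mu \sum_{i=1}^{n-1} \mathcal{E}(t-1-i).
\]
Substituting the inductive hypothesis into each error term on the right and factoring out $\mathcal{E}(0)$ gives $\mathcal{E}(t) \leq \left( \kappa\epsilon_{t-1} + \alpha\mu\sum_{i=1}^{n-1}\epsilon_{t-1-i} \right)\mathcal{E}(0)$, and by the one-step recursion established above the bracketed coefficient is exactly $\epsilon_t$. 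This closes the induction and proves \eqref{eq:contract} for all $t \geq 1$.

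The step I expect to require the most care is not any single inequality but the index bookkeeping: I must make sure that the negative-index terms $\mathcal{E}(t-1-i)$ with $t-1-i < 0$ are correctly absorbed by the conventions (each equal to $\mathcal{E}(0)$, matched by $\epsilon = 1$), and that the $j \mapsto j+1$ re-indexing aligns the two sums exactly. Notably, no analytic difficulty arises, since $\kappa$, $\mu$, and $\alpha$ enter only as fixed nonnegative constants; in particular Assumption~\ref{assum:alpha} is \emph{not} needed for this lemma, as the contraction condition only enters later (in Lemma~\ref{lem:decrease} and in the proof of Theorem~\ref{thm:conv-cyclic}) to force $\epsilon_t \to 0$. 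The present statement is thus a pure bound-propagation result, self-contained once the algebraic identity for $\epsilon_t$ has been verified.
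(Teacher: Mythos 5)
Your proposal is correct and follows essentially the same route as the paper: strong induction on $t$ driven by Lemma~\ref{lem:error}, with the base cases handled by the convention $\epsilon_t=1$ and $\mathcal{E}(t)=\mathcal{E}(0)$ for $t\leq 0$, and the inductive step closed by the $j\mapsto j+1$ re-indexing of the geometric sum (which the paper performs inline in its chain of equalities rather than isolating as a separate one-step recursion identity, but the computation is identical). Your observation that Assumption~\ref{assum:alpha} is not needed here is also consistent with the paper, whose proof of this lemma invokes no condition on $\alpha$.
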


\begin{proof}
When $t \leq 0$ 
\begin{equation*}
    \mathcal{E}(t) = \| \mathcal{X}(t) - \mathcal{X}_*\| = \mathcal{E}(0),
\end{equation*}
since $\mathcal{X}(t) = \mathcal{X}(0) = 0$ so $\epsilon_t = 1$.
    For $t \geq 1$, we proceed by induction. In the base case, we first consider $t=1$. By \eqref{eq:eps}, when $t=1$:
    \begin{align*}
        \epsilon_1 &= \kappa^{1} + \alpha \mu \sum_{j=0}^{0} \kappa^{j} \sum_{i=1}^{n-1} \epsilon_{1-1-j-i} \\ &= \kappa + \alpha \mu \sum_{i=1}^{n-1} 1 = \kappa + \alpha \mu (n-1).
    \end{align*}
    On the other hand, by Lemma~\ref{lem:error}, we have 
    \begin{align*}
        \mathcal{E}(1) &\leq \kappa \mathcal{E}(0) + \alpha \mu \sum_{i=1}^{n-1} \mathcal{E}(0-i) \\ 
        & = \left( \kappa + \alpha \mu (n-1) \right) \mathcal{E}(0) = \epsilon_1 \mathcal{E}(0).
    \end{align*}
    where the equality follows from the convention that $\mathcal{E}(i) = \mathcal{E}(0)$ for $i<0$. Thus, $\mathcal{E}(1) \leq \epsilon_1 \mathcal{E}(0).$
    Now let $T \in \mathbb{N}$ and assume that for all $t \leq T$, \eqref{eq:contract} holds. Again, by Lemma~\ref{lem:error}, we have
    \begin{align*}
        \mathcal{E}(T+1) &\leq \kappa \mathcal{E}(T) + \alpha \mu \sum_{i=1}^{n-1} \mathcal{E}(T-i) \\ 
        &\leq \left(\kappa \epsilon_T + \alpha \mu \sum_{i=1}^{n-1} \epsilon_{(T-i)}\right) \mathcal{E}(0),
    \end{align*}
    where the second inequality follows from our inductive hypothesis. Using the definition of $\epsilon_T$ and simplifying terms, we arrive at our desired conclusion: 
\begin{align*}
\mathcal{E}(T+1) & \leq\left(\kappa\epsilon_{T}+\alpha\mu\sum_{i=1}^{n-1}\epsilon_{(T-i)}\right)\mathcal{E}(0)\\
 & =\left(\kappa\left(\kappa^{T}+\alpha\mu\sum_{j=0}^{T-1}\kappa^{j}\sum_{i=1}^{n-1}\epsilon_{T-1-j-i}\right)+\alpha\mu\sum_{i=1}^{n-1}\epsilon_{(T-i)}\right)\mathcal{E}(0)\\
 \end{align*}
\begin{align*}
 & =\left(\kappa^{T+1}+\alpha\mu\sum_{j=0}^{T-1}\kappa^{j+1}\sum_{i=1}^{n-1}\epsilon_{T-1-j-i}+\alpha\mu\sum_{i=1}^{n-1}\epsilon_{(T-i)}\right)\mathcal{E}(0)\\
& =\left(\kappa^{T+1}+\alpha\mu\sum_{j=1}^{T}\kappa^{j}\sum_{i=1}^{n-1}\epsilon_{T-j-i}+\alpha\mu\cdot\kappa^{0}\sum_{i=1}^{n-1}\epsilon_{(T-i)}\right)\mathcal{E}(0)\\
 & =\left(\kappa^{T+1}+\alpha\mu\sum_{j=0}^{T}\kappa^{j}\sum_{i=1}^{n-1}\epsilon_{T-j-i}\right)\mathcal{E}(0)\\
 & =\epsilon_{T+1}\mathcal{E}(0).
\end{align*}
Note that we re-indexing in line 4 of the above sequence of arguments.
\end{proof}

\begin{lem} Let $\epsilon_t$ be as defined in \eqref{eq:eps}. For all $t \geq 0$, $\epsilon_{t+1} < \epsilon_t$.
    \label{lem:decrease}
\end{lem}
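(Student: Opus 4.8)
The plan is to discard the unwieldy closed form \eqref{eq:eps} in favour of the one-step recursion
\[
\epsilon_{t+1} = \kappa\,\epsilon_t + \alpha\mu\sum_{i=1}^{n-1}\epsilon_{t-i}, \qquad t \ge 0,
\]
valid under the standing convention $\epsilon_s = 1$ for $s \le 0$. This identity is nothing new: it is exactly the re-indexing step $j \mapsto j+1$ already performed at the end of the proof of Lemma~\ref{lem:contract}, where the $j=0$ term was split off from the sum defining $\epsilon_{t+1}$. I would therefore open the proof by stating this recursion as a preliminary observation, verifying it by that same shift and checking the boundary case $t=0$, which recovers $\epsilon_1 = \kappa + \alpha\mu(n-1)$ in agreement with Assumption~\ref{assum:alpha}.

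With the recursion available, I would prove $\epsilon_{t+1} < \epsilon_t$ by strong induction on $t$. The base case is immediate: Assumption~\ref{assum:alpha} gives $\epsilon_1 = \kappa + \alpha\mu(n-1) < 1 = \epsilon_0$. For the inductive step at $t \ge 1$, assuming $\epsilon_{s+1} < \epsilon_s$ for all $0 \le s \le t-1$, I would subtract the recursion at index $t-1$ from the recursion at index $t$ to obtain
\[
\epsilon_{t+1} - \epsilon_t = \kappa\,(\epsilon_t - \epsilon_{t-1}) + \alpha\mu\sum_{i=1}^{n-1}\bigl(\epsilon_{t-i} - \epsilon_{t-1-i}\bigr).
\]
Each bracketed difference has the form $\epsilon_{s+1} - \epsilon_s$ with $s = t-1-i$; by the induction hypothesis it is strictly negative when $s \ge 0$ and is exactly zero when $s < 0$ (both indices then fall in the $\epsilon \equiv 1$ regime). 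Hence the entire sum is non-positive, while the leading term $\kappa(\epsilon_t - \epsilon_{t-1})$ is strictly negative because $\epsilon_t < \epsilon_{t-1}$ by the hypothesis applied at $s = t-1$. Adding the two contributions yields $\epsilon_{t+1} - \epsilon_t < 0$, closing the induction.

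The main obstacle is obtaining the \emph{strict} inequality rather than a merely weak one, and this hinges on $\kappa > 0$: the strictly negative contribution comes entirely from the term $\kappa(\epsilon_t - \epsilon_{t-1})$, since for small $t$ (for instance $t=1$) the sum $\sum_{i}(\epsilon_{t-i}-\epsilon_{t-1-i})$ vanishes identically. I would therefore record at the outset that $\kappa > 0$; the degenerate alternative $\kappa = 0$ would force $\alpha\,\tilde{\mathcal{A}}_i^{T}*\tilde{\mathcal{A}}_i = \mathcal{I}$ for every $i$, which cannot happen for padded slices (each $\tilde{\mathcal{A}}_i$ has a single nonzero frontal slice, so its Gram tensor equals the identity tensor only in a measure-zero configuration) and is excluded under Assumption~\ref{assum:alpha}. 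A secondary, purely bookkeeping, point is the careful use of the convention $\epsilon_s = 1$ for $s \le 0$ when telescoping the differences: correctly handling the boundary terms $i \ge t$ is precisely what makes the sum non-positive rather than sign-indefinite.
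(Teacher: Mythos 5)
Your proof is correct, and it reaches the paper's conclusion by a mildly different route. The paper works directly with the closed form \eqref{eq:eps}: in the inductive step it peels the $j=T+1$ term off the double sum, factors out $\kappa^{T+1}\left(\kappa+\alpha\mu(n-1)\right)$, and invokes Assumption~\ref{assum:alpha} together with the inductive hypothesis applied termwise inside the remaining sum. You instead first convert \eqref{eq:eps} into the one-step recursion $\epsilon_{t+1}=\kappa\,\epsilon_t+\alpha\mu\sum_{i=1}^{n-1}\epsilon_{t-i}$ (which is indeed the same $j\mapsto j+1$ shift already used at the end of the proof of Lemma~\ref{lem:contract}) and then difference two consecutive instances of it; monotonicity follows because every bracketed difference is $\le 0$ by strong induction and the leading term $\kappa(\epsilon_t-\epsilon_{t-1})$ is strictly negative. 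Your version is arguably tidier --- the closed form is used only once, and the source of strictness is isolated in a single term --- and it makes explicit something the paper leaves implicit: strictness requires $\kappa>0$, which the paper also needs for its step $\kappa^{T+1}\left(\kappa+\alpha\mu(n-1)\right)<\kappa^{T+1}$. One small caveat: your remark that $\kappa=0$ ``is excluded under Assumption~\ref{assum:alpha}'' is not literally true, since the assumption only demands $\kappa<1$; if $\kappa=0$ one computes $\epsilon_2=\epsilon_1=\alpha\mu(n-1)$ and only the weak inequality survives, so the genericity of the padded slices is doing the real work there. Since the paper's own proof silently relies on the same non-degeneracy, this is a shared blemish rather than a gap in your argument.
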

\begin{proof}
    We proceed by induction. For the base case, let $t = 0$ then by definition
    \begin{align*}
        \epsilon_1 = \left( \kappa + \alpha \mu (n-1) \right) \epsilon_0 < \epsilon_0,
    \end{align*}
    where the inequality follows from the second inequality $\kappa+\alpha\mu(n-1)<1$ of  Assumption~\ref{assum:alpha}.
    Now let $T \in \mathbb{N}$ and suppose for all $t \leq T$, $\epsilon_{t+1} < \epsilon_{t}$ holds. Then for $\epsilon_{(T+1)+1}$, we have by definition of $\epsilon_t$:
    \begin{align*}
       \epsilon_{(T+1)+1} &= \kappa^{(T+1) + 1} + \alpha \mu \sum_{j=0}^{T+1} \kappa^{j} \sum_{i=1}^{n-1} \epsilon_{T+1-j-i} \\
      &= \kappa^{(T+1) + 1}  + \alpha \mu \sum_{j=0}^{T} \kappa^{j} \sum_{i=1}^{n-1} \epsilon_{T+1-j-i} +\alpha \mu \kappa^{T+1} \sum_{i=1}^{n-1} \epsilon_{T+1-(T+1)-i} \\
      &= \kappa^{(T+1)} \left(\kappa + \alpha \mu (n-1) \right) +\sum_{j=0}^{T} \kappa^{j} \sum_{i=1}^{n-1} \epsilon_{T+1-j-i} \\
      & < \kappa^{(T+1)} \left(\kappa + \alpha \mu (n-1) \right) +\sum_{j=0}^{T} \kappa^{j} \sum_{i=1}^{n-1} \epsilon_{T-j-i} \\
      & < \kappa^{(T+1)}  + \alpha \mu \sum_{j=0}^{T} \kappa^{j} \sum_{i=1}^{n-1} \epsilon_{T-j-i} = \epsilon_{T+1},
    \end{align*}
    where the first inequality follows from the inductive hypothesis and the second inequality follows from the second inequality $\kappa+\alpha\mu<1$ of Assumption~\ref{assum:alpha}. 
\end{proof}

\begin{proof} (Proof of Theorem~\ref{thm:conv-cyclic}) Suppose that Assumption~\ref{assum:alpha} holds. By Lemma~\ref{lem:contract}, the approximation error at iteration $t$ is $\mathcal{E}(t) \leq \epsilon_t \mathcal{E}(0)$ and by Lemma~\ref{lem:decrease}, $\epsilon_t \geq 0$ is decreasing. Since $\mathcal{E}(t) \geq 0$ for all $t$, this implies that the approximation error is decreasing to zero. 
\end{proof}

\section{Proof of Theorem~\ref{thm:conv-cyclic-noise} }

\label{subsec:conv-cyclic-noise}

Note that the proof of Theorem~\ref{thm:conv-cyclic} relies on the
system being consistent. In Lemma~\ref{lem:error}, we use the fact
that $\mathcal{B}=\mathcal{A}*\mathcal{X}^{*}$. Now we assume that
$\mathcal{B}+\mathcal{B}_{e}=\mathcal{A}*\mathcal{X}^{*}$ where $\mathcal{B}_{e}$
is the tensor error term. We still want to study $\mathcal{R}(t)\coloneqq\left\Vert \mathcal{B}-\mathcal{A}*\mathcal{X}(t)\right\Vert _{F}$
and the recursion is as follows:
\begin{align}
\mathcal{R}(t+1) & =\mathcal{R}(t)-\tilde{\mathcal{A}}_{i}*\mathcal{X}(t)+\tilde{\mathcal{A}}_{i}*\mathcal{X}(t-n+1)\nonumber \\
 & =\mathcal{B}-\mathcal{B}_{e}-\sum_{i=0}^{n-1}\tilde{\mathcal{A}}_{[(t-i)\Mod n]+1}*\mathcal{X}(t-i)\nonumber \\
 & =\sum_{i=0}^{n-1}\tilde{\mathcal{A}}_{[(t-i)\Mod n]+1}*\left(\mathcal{X}_{*}-\mathcal{X}(t-i)\right)-\mathcal{B}_{e}.\label{eq:residual_recursion_general-1}
\end{align}
\begin{align}
\mathcal{E}(t+1) & =\left\Vert \mathcal{X}(t+1)-\mathcal{X}_{*}\right\Vert _{F}\nonumber \\
 & =\left\Vert \mathcal{X}(t)+\alpha\tilde{\mathcal{A}}_{\left[t\Mod n\right]+1}^{T}*\mathcal{R}(t+1)-\mathcal{X}_{*}\right\Vert _{F}\nonumber \\
 & \leq\left\Vert \left(\mathcal{I}-\alpha\tilde{\mathcal{A}}_{\left[t\Mod n\right]+1}^{T}*\tilde{\mathcal{A}}_{\left[t\Mod n\right]+1}\right)*\left(\mathcal{X}(t)-\mathcal{X}_{*}\right)\right\Vert _{F}\nonumber \\
 & \quad\quad+\left\Vert \alpha\tilde{\mathcal{A}}_{\left[t\Mod n\right]+1}^{T}*\mathcal{B}_{e}\right\Vert _{F}\nonumber \\
 & \quad\quad+\left\Vert \alpha\tilde{\mathcal{A}}_{\left[t\Mod n\right]+1}^{T}*\left(\sum_{i=1}^{n-1}\tilde{\mathcal{A}}_{\left[(t-i)\Mod n\right]+1}*\left(\mathcal{X}_{*}-\mathcal{X}(t-i)\right)\right)\right\Vert _{F}\\
 & \leq\left\Vert \mathcal{I}-\alpha\tilde{\mathcal{A}}_{\left[t\Mod n\right]+1}^{T}*\tilde{\mathcal{A}}_{\left[t\Mod n\right]+1}\right\Vert _{op}\left\Vert \left(\mathcal{X}_{*}-\mathcal{X}(t)\right)\right\Vert _{F}\nonumber \\
 & \quad\quad+\left\Vert \alpha\tilde{\mathcal{A}}_{\left[t\Mod n\right]+1}^{T}*\mathcal{B}_{e}\right\Vert _{F}\\
 & \quad\quad+\alpha\sum_{i=1}^{n-1}\left\Vert \tilde{\mathcal{A}}_{\left[t\Mod n\right]+1}^{T}*\tilde{\mathcal{A}}_{\left[(t-i)\Mod n\right]+1}\right\Vert _{op}\left\Vert \left(\mathcal{X}_{*}-\mathcal{X}(t-i)\right)\right\Vert _{F}\nonumber \\
 & =\left\Vert \mathcal{I}-\alpha\tilde{\mathcal{A}}_{\left[t\Mod n\right]+1}^{T}*\tilde{\mathcal{A}}_{\left[t\Mod n\right]+1}\right\Vert _{op}\mathcal{E}(t)+\alpha\left\Vert \tilde{\mathcal{A}}_{\left[t\Mod n\right]+1}^{T}*\mathcal{B}_{e}\right\Vert _{F}+\\
 &\alpha\sum_{i=1}^{n-1}\left\Vert \tilde{\mathcal{A}}_{\left[t\Mod n\right]+1}^{T}*\tilde{\mathcal{A}}_{\left[(t-i)\Mod n\right]+1}\right\Vert _{op}\mathcal{E}(t-i)\nonumber \\
 & \leq\kappa\mathcal{E}(t)+\alpha\mu\sum_{i=1}^{n-1}\mathcal{E}(t-i)+\alpha\eta_{e}\label{eq:iter_bound-3}
\end{align}
\newpage
where $\eta_{e}\coloneqq\max_{i=1,\cdots,n}\left\Vert \tilde{\mathcal{A}}_{i}\right\Vert _{op}\left\Vert \mathcal{B}_{e}\right\Vert $.
Now consider the following definition and assumptions
\begin{equation}
\eta_{t}:=\begin{cases}
\kappa^{t}+\alpha\mu\sum_{j=0}^{t-1}\kappa^{j}\sum_{i=1}^{n-1}\epsilon_{t-1-j-i}+\frac{\alpha}{\mathcal{E}(0)}\eta_{e}=\epsilon_{t}+\frac{\alpha}{\mathcal{E}(0)}\eta_{e} & \text{if }t\geq1\\
1 & \text{otherwise.}
\end{cases}\label{eq:eps-1}
\end{equation}
\begin{assum} We assume the learning rate $\alpha$ is chosen such that $\kappa < 1$ and 
$$ \kappa+\alpha\mu(n-1)+\frac{\alpha}{\mathcal{E}(0)}\eta_{e}<1.$$
    \label{assum:alpha-noise}
\end{assum}

 Then we repeat the arguments in Lemma \ref{lem:error} by supposing
that 
\begin{align*}
\mathcal{E}(T+1) & \leq\kappa\mathcal{E}(T)+\alpha\mu\sum_{i=1}^{n-1}\mathcal{E}(T-i)+\alpha\eta_{e}\\
 & =\left(\kappa\epsilon_{T}+\alpha\mu\sum_{i=1}^{n-1}\epsilon_{T-i}+\frac{\alpha}{\mathcal{E}(0)}\eta_{e}\right)\mathcal{E}(0)\\
 & =\left(\kappa\left(\kappa^{T}+\alpha\mu\sum_{j=0}^{T-1}\kappa^{j}\sum_{i=1}^{n-1}\epsilon_{T-1-j-i}\right)+\alpha\mu\sum_{i=1}^{n-1}\epsilon_{T-i}+\frac{\alpha}{\mathcal{E}(0)}\eta_{e}\right)\mathcal{E}(0)\\
\\
 & =\left(\kappa^{T+1}+\alpha\mu\sum_{j=1}^{T}\kappa^{j}\sum_{i=1}^{n-1}\epsilon_{T-j-i}+\alpha\mu\sum_{i=1}^{n-1}\epsilon_{T-i}+\frac{\alpha}{\mathcal{E}(0)}\eta_{e}\right)\mathcal{E}(0)\\
 & =\left(\kappa^{T+1}+\alpha\mu\sum_{j=0}^{T}\kappa^{j}\sum_{i=1}^{n-1}\epsilon_{T-j-i}+\frac{\alpha}{\mathcal{E}(0)}\eta_{e}\right)\mathcal{E}(0)\\
 & =\eta_{T+1}\mathcal{E}(0).
\end{align*}
 And then $\eta_{t+1}<\eta_{t}$ for $t\geq1$ follows from the same
lines as Lemma \ref{lem:decrease}: as long as $\kappa+\alpha\mu(n-1)+\frac{\alpha}{\mathcal{E}(0)}\eta_{e}<1$,
the line 5 of the below argument goes through:
\begin{align*}
\eta_{(T+1)+1} & =\epsilon_{T+1}+\frac{\alpha}{\mathcal{E}(0)}\eta_{e}\\
 & =\kappa^{(T+1)+1}+\alpha\mu\sum_{j=0}^{T+1}\kappa^{j}\sum_{i=1}^{n-1}\epsilon_{T+1-j-i}+\frac{\alpha}{\mathcal{E}(0)}\eta_{e}\\
 & =\kappa^{(T+1)+1}+\alpha\mu\sum_{j=0}^{T}\kappa^{j}\sum_{i=1}^{n-1}\epsilon_{T+1-j-i}+\alpha\mu\kappa^{T+1}\sum_{i=1}^{n-1}\epsilon_{T+1-(T+1)-i}+\frac{\alpha}{\mathcal{E}(0)}\eta_{e}\\
  \end{align*}
 \begin{align*}
 & =\kappa^{(T+1)}\left(\kappa+\alpha\mu(n-1)\right)+\sum_{j=0}^{T}\kappa^{j}\sum_{i=1}^{n-1}\epsilon_{T+1-j-i}+\frac{\alpha}{\mathcal{E}(0)}\eta_{e}\\
 & <\kappa^{(T+1)}\left(\kappa+\alpha\mu(n-1)+\frac{\alpha}{\mathcal{E}(0)}\eta_{e}\right)+\sum_{j=0}^{T}\kappa^{j}\sum_{i=1}^{n-1}\epsilon_{T-j-i}+\frac{\alpha}{\mathcal{E}(0)}\eta_{e}\\
 & <\kappa^{(T+1)}+\alpha\mu\sum_{j=0}^{T}\kappa^{j}\sum_{i=1}^{n-1}\epsilon_{T-j-i}+\frac{\alpha}{\mathcal{E}(0)}\eta_{e} =\epsilon_{T+1}+\frac{\alpha}{\mathcal{E}(0)}\eta_{e}=\eta_{T+1}.
\end{align*}

\section{ Corollary \ref{cor:conv_block}: blocked
Case}
\label{subsec:Blocked-Case}

Suppose that we still perform cyclic descent, but each time we consider
a block of $s$ consecutive slices. For simplicity we assume $n/s$
is an integer, and this mean that we can write the residual as follows when we consider the last $n/s$ iterated slices. 
\begin{align}
 & \mathcal{R}(t+1)\nonumber \\
 & =\mathcal{B}-\left( \tilde{\mathcal{A}}_{(t\mod(n/s))}^{s}*\mathcal{X}(t)+\tilde{\mathcal{A}}_{(t-1\mod(n/s))}^{s}*\mathcal{X}(t-1)+\right.\nonumber \\
 & \left.\cdots+\tilde{\mathcal{A}}_{(t-n/s\mod(n/s))}^{s}*\mathcal{X}(t-T+1)\right),\nonumber \\
 & =\mathcal{B}-\sum_{i=0}^{n/s-1}\tilde{\mathcal{A}}_{(t-i)}^{s}*\mathcal{X}(t-i)\nonumber \\
 & =\sum_{i=0}^{n/s-1}\tilde{\mathcal{A}}_{(t-i)}^{s}*\left(\mathcal{X}_{*}-\mathcal{X}(t-i)\right).\label{eq:residual_recursion_general-2}
\end{align}
We define $\tilde{\mathcal{A}}_{i}^{s}\coloneqq\tilde{\mathcal{A}}_{i\cdot(s-1)+1}+\tilde{\mathcal{A}}_{i\cdot(s-1)+2}+\cdots+\tilde{\mathcal{A}}_{i\cdot s}$
to be the sum of $s$ padded slices. And the same lines of arguments
in \eqref{eq:residual_recursion_general}  still hold with
$T=n/s$ in \eqref{eq:residual_recursion_general-2}: 
\begin{align}
\mathcal{E}(t+1) & \leq\kappa(s)\mathcal{E}(t)+\alpha\mu\sum_{i=1}^{n/s-1}\mathcal{E}(t-i)\label{eq:iter_bound-2}\\
 & \leq\kappa(s)^{t}\mathcal{E}(0)+\alpha\mu(s)\sum_{j=0}^{t}\sum_{i=1}^{n/s}\kappa(s)^{j}\mathcal{E}(t-j-i),\nonumber 
\end{align}
where we need new constants $\kappa(s),\mu(s)$ depending on the block
size $s$, defined with respect to each block (when $s=1$ these reduces
to \eqref{eq:def_kappa_mu}) 
\begin{align}
\kappa(s) & =\max_{i=1,...n/s}\|\mathcal{I}-\alpha\tilde{\mathcal{A}}_{i}^{sT}*\tilde{\mathcal{A}}_{i}^{s}\|_{op},\\
\mu(s) & =\max_{\substack{i,j=1,...,n/s\\
i\neq j
}
}\|\tilde{\mathcal{A}}_{i}^{sT}*\tilde{\mathcal{A}}_{j}^{s}\|_{op}.\nonumber 
\end{align}
And the analog of the equation \eqref{eq:eps} can be written as
follows, where the contraction factor $\epsilon_{t}(s)$ depends on
the block size $s$: 
\begin{equation}
\epsilon_{t}(s) := 
    \begin{cases}
\kappa(s)^{t} + \alpha \mu(s) \sum_{j=0}^{t-1} \kappa^{j}(s) \sum_{i=1}^{n-1} \epsilon_{t-1-j-i}(s) & \text{if } t \geq 1 \\ 
1 & \text{otherwise.}
\end{cases}
\label{eq:epsi-2}
\end{equation}
The verification for $t\leq n/s$ as the induction base case. That
means, our assumption needs to depend on the block size $s$ as well:
\begin{assum} We assume the learning rate $\alpha$ is chosen such that $\kappa(s) < 1$ and 
$$ \kappa(s) + \alpha \mu(s) (n/s-1)<1.$$
    \label{assum:alpha-2}
\end{assum}

To interpret this, it is not hard to observe that $\kappa(s)\leq\kappa_{1}$
where $\kappa_{1}$ is computed with a different $\alpha_{1}=\alpha/s^{4}$
since for $i=1,...n/s$ 
\begin{align*}
\|\mathcal{I}-\alpha\tilde{\mathcal{A}}_{i}^{sT}*\tilde{\mathcal{A}}_{i}^{s}\|_{op} & =\left\Vert \mathcal{I}-\alpha\left(\tilde{\mathcal{A}}_{i\cdot(s-1)+1}+\tilde{\mathcal{A}}_{i\cdot(s-1)+2}+\cdots+\tilde{\mathcal{A}}_{i\cdot s}\right)^{T}*\right. \\
& \left. \left(\tilde{\mathcal{A}}_{i\cdot(s-1)+1}+\tilde{\mathcal{A}}_{i\cdot(s-1)+2}+\cdots+\tilde{\mathcal{A}}_{i\cdot s}\right)\right\Vert _{op}\\
 & =\left\Vert \mathcal{I}-\alpha\sum_{i_{1},i_{2}=i\cdot(s-1)+1,\cdots,i\cdot s}\tilde{\mathcal{A}}_{i_{1}}^{T}*\tilde{\mathcal{A}}_{i_{2}}\right\Vert _{op}\\
 & \geq\left|\left\Vert \mathcal{I}\right\Vert _{op}-\alpha\left\Vert \sum_{i_{1},i_{2}=i\cdot(s-1)+1,\cdots,i\cdot s}\tilde{\mathcal{A}}_{i_{1}}^{T}*\tilde{\mathcal{A}}_{i_{2}}\right\Vert _{op}\right|\\
 & \geq\left|\left\Vert \mathcal{I}\right\Vert _{op}-s^{2}\alpha\max_{i=1,...n}\left\Vert \tilde{\mathcal{A}}_{i}^{T}*\tilde{\mathcal{A}}_{i}\right\Vert _{op}\right|
\end{align*}
The $\mu(s)\leq s^{2}\mu$ holds with a similar calculation. 
\begin{align*}
\|\tilde{\mathcal{A}}_{i}^{sT}*\tilde{\mathcal{A}}_{j}^{s}\|_{op} & =\left\Vert \left(\tilde{\mathcal{A}}_{i\cdot(s-1)+1}+\tilde{\mathcal{A}}_{i\cdot(s-1)+2}+\cdots+\tilde{\mathcal{A}}_{i\cdot s}\right)^{T}*\left(\tilde{\mathcal{A}}_{j\cdot(s-1)+1}+\tilde{\mathcal{A}}_{j\cdot(s-1)+2}+\cdots+\tilde{\mathcal{A}}_{j\cdot s}\right)\right\Vert _{op}\\
 & =\left\Vert \sum_{\begin{array}{c}
i_{1}=i\cdot(s-1)+1,\cdots,i\cdot s\\
i_{2}=j\cdot(s-1)+1,\cdots,j\cdot s
\end{array}}\tilde{\mathcal{A}}_{i_{1}}^{T}*\tilde{\mathcal{A}}_{i_{2}}\right\Vert _{op}\\
 & \leq s^{2}\max_{i\neq j,i,j=1,\cdots,n}\left\Vert \tilde{\mathcal{A}}_{i}^{T}*\tilde{\mathcal{A}}_{j}\right\Vert _{op}.
\end{align*}
These calculation shows that 
\begin{align*}
\kappa(s)+\alpha\mu(s)(n/s-1) & \leq\kappa_{1}+\alpha s^{2}\mu(n/s-1).
\end{align*}
And the Assumption \ref{assum:alpha-2} becomes more restrictive for
larger block size $s$. In practice these are a smaller set ($n/s<n$)
of conditions compared to assumptions in Theorem \ref{thm:conv-cyclic}.

Analogous to the argument in Theorem \ref{thm:conv-cyclic}, via \eqref{eq:iter_bound-2} we have that: 
\begin{align}
\mathcal{E}(t+1) & =\left\Vert \mathcal{X}(t+1)-\mathcal{X}_{*}\right\Vert _{F}\nonumber \\
 & =\left\Vert \mathcal{X}(t)+\alpha\tilde{\mathcal{A}}_{\left[t\Mod n\right]+1}^{T}*\mathcal{R}(t+1)-\mathcal{X}_{*}\right\Vert _{F}\nonumber \\
 & \leq\left\Vert \left(\mathcal{I}-\alpha\tilde{\mathcal{A}}_{\left[t\Mod n\right]+1}^{T}*\tilde{\mathcal{A}}_{\left[t\Mod n\right]+1}\right)*\left(\mathcal{X}(t)-\mathcal{X}_{*}\right)\right\Vert _{F}\nonumber \\
 & \quad\quad+\left\Vert \alpha\tilde{\mathcal{A}}_{\left[t\Mod n\right]+1}^{T}*\left(\sum_{i=1}^{n-1}\tilde{\mathcal{A}}_{\left[(t-i)\Mod n\right]+1}*\left(\mathcal{X}_{*}-\mathcal{X}(t-i)\right)\right)\right\Vert _{F}\nonumber \\
 & \leq\left\Vert \mathcal{I}-\alpha\tilde{\mathcal{A}}_{\left[t\Mod n\right]+1}^{T}*\tilde{\mathcal{A}}_{\left[t\Mod n\right]+1}\right\Vert _{op}\left\Vert \left(\mathcal{X}_{*}-\mathcal{X}(t)\right)\right\Vert _{F}\nonumber \\
 & \quad\quad+\alpha\sum_{i=1}^{n-1}\left\Vert \tilde{\mathcal{A}}_{\left[t\Mod n\right]+1}^{T}*\tilde{\mathcal{A}}_{\left[(t-i)\Mod n\right]+1}\right\Vert _{op}\left\Vert \left(\mathcal{X}_{*}-\mathcal{X}(t-i)\right)\right\Vert _{F}\nonumber \\
 & =\left\Vert \mathcal{I}-\alpha\tilde{\mathcal{A}}_{\left[t\Mod n\right]+1}^{T}*\tilde{\mathcal{A}}_{\left[t\Mod n\right]+1}\right\Vert _{op}\mathcal{E}(t)\nonumber \\
 & +\alpha\sum_{i=1}^{n-1}\left\Vert \tilde{\mathcal{A}}_{\left[t\Mod n\right]+1}^{T}*\tilde{\mathcal{A}}_{\left[(t-i)\Mod n\right]+1}\right\Vert _{op}\mathcal{E}(t-i)\nonumber \\
 & \leq\kappa\mathcal{E}(t)+\alpha\mu\sum_{i=1}^{n-1}\mathcal{E}(t-i),\label{eq:iter_bound-1}
\end{align}
\begin{align*}
\mathcal{E}(t+1) & \leq\kappa(s)^{t}\epsilon_{T}(s)+\alpha\mu(s)\sum_{j=0}^{t}\sum_{i=1}^{n/s}\kappa(s)^{j}\mathcal{E}(t-j-i),\\
 & =\left\Vert \mathcal{X}(t+1)-\mathcal{X}_{*}\right\Vert _{F}\\
 & =\left\Vert \mathcal{X}(t)+\alpha\tilde{\mathcal{A}^{sT}}_{\left[t\Mod n/s\right]+1}*\mathcal{R}(t+1)-\mathcal{X}_{*}\right\Vert _{F}\\
 & \leq\left\Vert \left(\mathcal{I}-\alpha\tilde{\mathcal{A}^{sT}}_{\left[t\Mod n/s\right]+1}*\tilde{\mathcal{A}}_{\left[t\Mod n/s\right]+1}^{s}\right)*\left(\mathcal{X}(t)-\mathcal{X}_{*}\right)\right\Vert _{F}\\
 & \quad\quad+\left\Vert \alpha\tilde{\mathcal{A}^{sT}}_{\left[t\Mod n/s\right]+1}*\left(\sum_{i=1}^{n/s-1}\tilde{\mathcal{A}}_{\left[(t-i)\Mod n/s\right]+1}*\left(\mathcal{X}_{*}-\mathcal{X}(t-i)\right)\right)\right\Vert _{F}\\
 & \leq\left\Vert \mathcal{I}-\alpha\tilde{\mathcal{A}^{sT}}_{\left[t\Mod n/s\right]+1}*\tilde{\mathcal{A}}_{\left[t\Mod n/s\right]+1}^{s}\right\Vert _{op}\left\Vert \left(\mathcal{X}_{*}-\mathcal{X}(t)\right)\right\Vert _{F}\\
  \end{align*}
 \begin{align*}
 & \quad\quad+\alpha\sum_{i=1}^{n/s-1}\left\Vert \tilde{\mathcal{A}^{sT}}_{\left[t\Mod n/s\right]+1}*\tilde{\mathcal{A}}_{\left[(t-i)\Mod n/s\right]+1}^{s}\right\Vert _{op}\left\Vert \left(\mathcal{X}_{*}-\mathcal{X}(t-i)\right)\right\Vert _{F}\\
 & =\left\Vert \mathcal{I}-\alpha\tilde{\mathcal{A}^{sT}}_{\left[t\Mod n/s\right]+1}*\tilde{\mathcal{A}}_{\left[t\Mod n/s\right]+1}^{s}\right\Vert _{op}\mathcal{E}(t)+\\&\alpha\sum_{i=1}^{n/s-1}\left\Vert \tilde{\mathcal{A}^{sT}}_{\left[t\Mod n/s\right]+1}*\tilde{\mathcal{A}}_{\left[(t-i)\Mod n/s\right]+1}^{s}\right\Vert _{op}\mathcal{E}(t-i)\\
 & \leq\kappa(s)\mathcal{E}(t)+\alpha\mu(s)\sum_{i=1}^{n/s}\mathcal{E}(t-i)\leq...\\
 & \leq\left(\kappa(s)^{t+1}+\alpha\mu(s)\sum_{j=0}^{t}\kappa(s)^{j}\sum_{i=1}^{n/s-1}\epsilon_{t-j-i}(s)\right)\mathcal{E}(0)\\
 & =\epsilon_{T+1}(s)\mathcal{E}(0).
\end{align*}
With the Assumption, we can show 
\begin{align*}
\epsilon_{(T+1)+1}(s) & =\kappa^{(T+1)+1}(s)+\alpha\mu(s)\sum_{j=0}^{T+1}\kappa(s)^{j}\sum_{i=1}^{n/s-1}\epsilon_{T+1-j-i}(s)\\
 & =\kappa^{(T+1)+1}(s)+\alpha\mu(s)\sum_{j=0}^{T}\kappa(s)^{j}\sum_{i=1}^{n/s-1}\epsilon_{T+1-j-i}(s)+\alpha\mu(s)\kappa(s)^{T+1}\sum_{i=1}^{n/s-1}\epsilon_{T+1-(T+1)-i}(s)\\
 & =\kappa^{(T+1)}(s)\left(\kappa(s)+\alpha\mu(s)\cdot(n/s-1)\right)+\sum_{j=0}^{T}\kappa(s)^{j}\sum_{i=1}^{n/s-1}\epsilon_{T+1-j-i}(s)\\
 & <\kappa^{(T+1)}(s)\left(\kappa(s)+\alpha\mu(s)\cdot(n/s-1)\right)+\sum_{j=0}^{T}\kappa(s)^{j}\sum_{i=1}^{n/s-1}\epsilon_{T-j-i}(s)\\
 & <\kappa^{(T+1)}(s)+\alpha\mu(s)\cdot\sum_{j=0}^{T}\kappa(s)^{j}\sum_{i=1}^{n/s-1}\epsilon_{T-j-i}(s)=\epsilon_{T+1}(s).
\end{align*}

This also allows us to discuss a special case, where the block size
$s=n$, and the Algorithm \ref{alg:tsolve_cyclic_slices} reduces
to full gradient descent for \eqref{eq:tensorlinsys}. Then, our Assumption \ref{assum:alpha-2}
becomes this only sufficient condition $\kappa<1$, which means that unless all slices are orthogonal under t-product, the full gradient descent should converge.

%
%
%
%
%
%
%
%
%
%

\end{document}